\definecolor{bole}{rgb}{0.47, 0.27, 0.23}
\definecolor{amber}{rgb}{1.0, 0.75, 0.0}
\numberwithin{equation}{section}
\newenvironment{figurehere}
{\def\@captype{figure}}
{}
\def\subsubsection{\@startsection{subsubsection}{4}%
  \z@\z@{-\fontdimen2\font}%
  {\newline\normalfont\bfseries}}
\theoremstyle{plain}
\newtheorem{theorem}{Theorem}[section]
\newtheorem{prop}[theorem]{Proposition}
\newtheorem{lem}[theorem]{Lemma}
\newtheorem{lemma}[theorem]{Lemma}
\theoremstyle{definition}
\newtheorem{definition}[theorem]{Definition}
\theoremstyle{remark}
\newtheorem{remark}[theorem]{Remark}
\DeclareMathOperator{\supp}{supp}
\DeclareMathOperator{\e}{e}
\DeclareMathOperator{\hi}{hi}
\DeclareMathOperator{\lo}{lo}
\renewcommand{\epsilon}{\varepsilon}
\newcommand{\N}{\mathbb{N}}
\newcommand{\Z}{\mathbb{Z}}
\newcommand{\R}{\mathbb{R}}
\newcommand{\C}{\mathbb{C}}
\newcommand{\T}{\mathbb{T}}
\newcommand{\abs}[1]{\vert #1 \vert}
\newcommand{\norm}[1]{\| #1 \|}
\newcommand{\normbig}[1]{\biggl\| #1 \biggr\|}
\newcommand{\parent}[1]{\bigl(#1\bigr)}
\newcommand{\parentbig}[1]{\biggl(#1\biggr)}
\newcommand{\set}[1]{\bigl\{#1\mathclose{}\bigr\}}
\newcommand{\japbrak}[1]{\langle#1\rangle}
\newcommand{\interval}[4]{\mathopen{#1}#2\mathclose{}\mathpunct{},#3\mathclose{#4}}
\newcommand{\intervaloo}[2]{\interval{(}{#1}{#2}{)}}
\newcommand{\intervaloc}[2]{\interval{(}{#1}{#2}{]}}
\newcommand{\intervalcc}[2]{\interval{[}{#1}{#2}{]}}
\newcommand\dd{\,{\mathrm d}}
\newcommand{\un}{\mathbf{1}}
\newcommand{\A}{\mathcal{A}}
\begin{document}


\title[Almost sure LWP for cubic Schrödinger half-wave]{Refined probabilistic local well-posedness for a cubic Schrödinger half-wave equation}

\author{Nicolas Camps, Louise Gassot, Slim Ibrahim}
\address{Universit\'e Paris-Saclay, Laboratoire de mathématiques d'Orsay, UMR 8628 du CNRS, B\^atiment 307, 91405 Orsay Cedex, France}
\email{nicolas.camps@universite-paris-saclay.fr}
\address{
Departement
Mathematik und Informatik, Universität Basel, Spiegelgasse 1, 4051 Basel, Schweiz}
\email{louise.gassot@normalesup.org}

\address{Department of Mathematics and Statistics
University of Victoria, 3800 Finnerty Road, Victoria
BC V8P 5C2, Canada\newline
Pacific Institute for Mathematical Sciences, 4176-2207 Main Mall, Vancouver, BC V6T 1Z4,
Canada}
\email{ibrahims@uvic.ca}

\subjclass[2020]{35A01 primary}

\keywords{Cauchy theory, nonlinear Schrödinger equation, half-wave equation, weakly dispersive equation, random initial data, quasilinear equation}

\date{\today}

\begin{abstract}
We obtain probabilistic local well-posedness in quasilinear regimes for the Schrödinger half-wave equation with a cubic nonlinearity. We need to use a refined ansatz because of the lack of probabilistic smoothing in the Picard's iterations, which is due to the high-low-low nonlinear interactions. The proof is an adaptation of the method of Bringmann on the derivative nonlinear wave equation~\cite{bringmann2021-ansatz} to Schrödinger-type equations. In addition, we discuss ill-posedness results for this equation. 
\end{abstract}

\ \vskip -1cm  \hrule \vskip 1cm \vspace{-8pt}
 \maketitle 
{ \textwidth=4cm \hrule}

\maketitle
\tableofcontents

\section{Introduction}
We consider the Cauchy problem at low regularity for the cubic Schrödinger half-wave equation on $\R^2$
\begin{equation}
\label{eq:NLSHW}
    \tag{NLS-HW}
    \begin{cases}
    i\partial_tu + \parent{\partial_{xx}- \abs{D_y}}u = \mu \abs{u}^{2}u\,,\quad (t,x,y)\in \R^{1+2}\,,\quad \widehat{\abs{D_y}u}(\xi,\eta) = \abs{\eta} \widehat{u}(\xi,\eta)\,,\\
    u_{t=0} = u_0\,,
    \end{cases}
\end{equation}
with $\mu\in\R^*$. The study of this equation is motivated by mathematical interests, as a model for understanding weakly dispersive equations that are not completely integrable, but for which some global solutions have unbounded Sobolev norms.


~\\
\noindent{\bf Background on the Schrödinger half-wave equation } This model was first introduced by Xu~\cite{Xu2017unbounded} to evidence weak turbulence mechanisms.  In the defocusing case $(\mu>0)$, global existence and modified scattering is obtained for a class of sufficiently smooth and decaying small initial data on the wave guide $\R_x\times\T_y$. In addition, the author shows that the limiting effective dynamics is governed by the Szeg\H{o} equation on the torus. As a consequence of~\cite{GerardGrellier2012,GerardGrellier2017}, this analysis provides arbitrarily small initial data such that for every $s>\frac 12$ and $N\geq 1$, the solution $u$ exhibits weak turbulence in the sense that
\[
\limsup_{t\to\infty} \frac{\|u(t)\|_{L^2_xH^s_y}}{\log(t)^N}=\infty\,,\quad
 \liminf_{t\to\infty} \|u(t)\|_{L^2_xH^s_y}<\infty\,.
\]
Subsequently, Bahri, Ibrahim and Kikuchi consider in~\cite{BahriIbrahimKikuchi2020remarks, BahriIbrahimKikuchi2021transverse} the focusing case $\mu<0$  and study traveling wave solutions on the wave guide $\R_x\times\T_y$. They obtain orbital stability and transverse instability results, subject to the condition of a good Cauchy theory in the energy space. Unfortunately, such a Cauchy theory is yet to be addressed, and not much is known about the global existence of smooth solutions in Sobolev spaces. 

The main concern in this paper is to address the local Cauchy problem at low regularity, for initial data in a \emph{statistical} ensemble. Before presenting our results, let us recall that~\eqref{eq:NLSHW} is a Hamiltonian system with a conserved energy 
\[
H(u) = \frac{1}{2}\int_{\R^2} \left|\partial_xu\right|^2 + \left|\abs{D_y}^\frac{1}{2}u\right|^2 \dd x \dd y+ \frac{\mu}{4}\int_{\R^2}|u|^4\dd x\dd y\,.
\]
The $L^2$ mass is also formally conserved by the flow, and the solutions are left invariant by the scaling symmetry
\begin{equation}
\label{eq:scaling}
    u\mapsto u_{\lambda}(t,x,y)=\lambda u(\lambda^2 t,\lambda x,\lambda^2 y)\,.
\end{equation}
In light of the conservation laws and of the scaling invariance, the relevant regularity spaces for this equation are the following anisotropic Sobolev spaces $\mathcal{H}^s$ defined by
\[
\mathcal{H}^s:=L^2_xH^{s}_y\cap H^{2s}_xL^2_y\,,\quad \dot{\mathcal{H}}^s:=L^2_x\dot{H}^{s}_y\cap \dot{H}^{2s}_xL^2_y\,.
\]
The scaling~\eqref{eq:scaling} leaves the $\dot{\mathcal{H}}^\frac{1}{4}$-norm invariant. As a consequence, when $0<s<\frac{1}{4}$, a \emph{low-to-high frequency cascade} occurs and there is a short-time inflation of the $\dot{\mathcal{H}}^s$-norm for the regularized solutions~\cite{Kato2021}. We make precise the type of ill-posedness by describing a {\it pathological set} in Appendix~\ref{sec:ill-posedness}. When $\frac{1}{4}<s$, the equation is scaling-subcritical. Yet, semilinear local well-posedness is only known in $\mathcal{H}^s$ when $\frac{1}{2}<s$. Specifically, semilinear well-posedness is obtained in~\cite{BahriIbrahimKikuchi2020remarks} from Strichartz estimates with a derivative loss.    
Furthermore, we prove in Appendix~\ref{sec:ill-posedness} that the flow map cannot be of class $\mathcal{C}^3$ when $\frac{1}{4}<s<\frac{1}{2}$. To do so, we consider a one-parameter family of traveling wave profiles for the one-dimensional Szeg\H{o} equation to invalidate some relevant Strichartz estimates. It is therefore not possible to run a contraction mapping argument when $\frac{1}{4}<s<\frac{1}{2}$, since otherwise the flow-map would be analytical. The state-of-the-art Cauchy theory results for~\eqref{eq:NLSHW} are summarized in the following diagram:

\begin{figurehere}
\begin{center}
\begin{tikzpicture}
\draw[line width=3, bole] (1,0)--(5,0);
\draw[line width=3, amber] (5,0)--(10,0);
\draw[bole] (3,-0.1) node[below] {Norm-inflation~\cite{Kato2021}};
\draw[orange] (7,-0.1) node[below] {Flow map is not $\mathcal{C}^3$~\cite{bgt-05}};
\draw[Cerulean] (11.5,-0.1) node[below] {Local well-posedness~\cite{BahriIbrahimKikuchi2020remarks}};
\draw[->, line width=3, Cerulean] (9,0)--(14,0);
\draw (14.5,0)  node[above]{$\mathcal{H}^s$};
\draw (1,0) node{{\color{bole} \bf{I}}} node[above=2]{ $L^2$};
\draw (5,0) node{{\color{amber} \bf{I}}} node[above=2]{ $\mathcal{H}^\frac{1}{4}$};
\draw (9,0) node{{\bf I}} node[above=2]{$\mathcal{H}^\frac{1}{2}$};
\draw (-0,0)--(14,0);
\end{tikzpicture}
\end{center}
\caption{Cauchy theory for equation~\eqref{eq:NLSHW} in the scale of Sobolev spaces. 
 }
\end{figurehere}

~\\
{\bf Main result}
As mentioned above, it turns out to be quite challenging to prove semilinear local well-posedness in the energy space, or to run a quasi linear scheme in $\mathcal{H}^s$ when $\frac{1}{4}<s<\frac{1}{2}$. Instead, our goal is to study the \emph{probabilistic} local well-posedness of equation~\eqref{eq:NLSHW} in this range of regularities.  Given $f_0\in\mathcal{H}^s(\R^2)$ and a sequence of independent normalized Gaussian variables $(g_k(\omega))_{k\in\Z}$ on a probability space $(\Omega,\mathcal{A},\mathbb{P})$, we define a random variable 
\begin{equation*}
    \omega\in\Omega\mapsto f_0^\omega := \sum_{k\in\Z} g_{k}(\omega)P_{1,k}f_0\,,
\end{equation*}
where $P_{1,k}$ is the partial Fourier projector (in the $y$-variable) on an interval of unit length centered around $k\in\Z$. We refer to~\eqref{eq:randomization} for the precise definition of the relevant anisotropic Wiener randomization procedure we employ, and to Figure~\ref{fig:Pnk}. The considered probability measure on $\mathcal{H}^s$ is the induced measure by the above random variable.

However, due to the lack of dispersion and to the absence of regularizing features (such as bilinear estimates or local energy decay), Picard's iterations have the same regularity as the initial data and the standard probabilistic method, which is discussed below, fails. Instead, we need to consider a refined probabilistic ansatz adapted from the probabilistic quasilinear scheme developed by Bringmann~\cite{bringmann2021-ansatz} to prove probabilistic well-posedness for a derivative wave equation. For $T>0$ and some $s$ and $\sigma$ in $\R$, we denote 
\begin{equation}
    \label{eq:XT0}
X_{T_0}^{s,\sigma} := \, \mathcal{C}_t\parent{\intervalcc{-T_0}{T_0}\, ;\, \mathcal{H}^s(\R^2)} \cap L_t^8\left(\intervalcc{-T_0}{T_0}\, ;\,  L_x^4W_y^{\sigma,\infty}(\R^2)\right)\,.
\end{equation}
We also denote the truncated initial data as
\begin{equation*}
    P_{\leq n}f_0^\omega := \sum_{|k|\leq n} g_{k}(\omega)P_{1,k}f_0\,.
\end{equation*}
Our main result reads as follows. 
\begin{theorem}[Probabilistic local well-posedness]
\label{theo:main}
Let $s\in(13/28, 1/2]$, some suitable $0<\sigma<s$ and $f_0\in\mathcal{H}^s$.
There exist $T_0>0$ and a sequence $(u_n)_{n\geq 1}\in \mathcal{C}([-T_0,T_0]\, ;\, \mathcal{H}^\infty)^\N$ converging in expectation to a limiting object denoted $u$
\begin{equation}
    \label{eq:convergence}
    \underset{n\to\infty}{\lim}\,\mathbb{E}\left[\left\| u_n - u \right\|_{X_{T_0}^{s,\sigma}}^2 \right] = 0\,,
\end{equation}
in such a way that, almost-surely in $\omega\in\Omega$, there exists $T^\omega>0$ such that for all $n\in\N$, $u_n$ and $u$ exist in $\mathcal{C}([-T^\omega, T^\omega] ; \mathcal{H}^s)$ and
solve~\eqref{eq:NLSHW} with initial data $P_{\leq n}f_0^\omega$ and $f_0^\omega$, respectively.
\end{theorem}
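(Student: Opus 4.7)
The plan is to adapt Bringmann's refined probabilistic ansatz~\cite{bringmann2021-ansatz} to the weakly dispersive setting of~\eqref{eq:NLSHW}. Let $F^\omega(t) := e^{it(\partial_{xx}-|D_y|)} f_0^\omega$ denote the linear evolution of the randomized data, and $F_n^\omega$ the analogous evolution of $P_{\leq n} f_0^\omega$. Because \eqref{eq:NLSHW} is only weakly dispersive in $y$ and the cubic nonlinearity contains high-low-low interactions of the form $F_{hi} \bar F_{lo} F_{lo}$, the standard Bourgain-style decomposition $u = F^\omega + v$ with $v$ smoother than $F^\omega$ breaks down: the first Picard iterate lies at the same regularity as $F^\omega$ itself. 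Instead, I would write $u = F^\omega + \Psi + v$, where $\Psi$ is an explicit first-order nonlinear correction absorbing the resonant, non-smoothing trilinear-in-randomness terms, and $v$ is a deterministic-looking remainder which will live at a regularity $s' > s$.

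The first step is to establish \emph{probabilistic Strichartz estimates}: using the anisotropic unit-interval Wiener randomization in $y$, Khintchine's inequality, and the $L^p_t L^q_x$ Strichartz bounds for the Schrödinger propagator $e^{it\partial_{xx}}$ in the $x$ variable, prove moment bounds
\[
\mathbb{E}\bigl[\|F^\omega\|_{X_{T_0}^{s,\sigma}}^p\bigr]^{1/p} \lesssim \sqrt{p}\, \|f_0\|_{\mathcal{H}^s},
\]
together with multilinear variants controlling $\pi(F^\omega, \overline{F^\omega}, F^\omega)$, $\pi(F^\omega, \overline{F^\omega}, v)$, and so on, after projecting out high-low-low resonances. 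The independence of the Gaussians at unit-separated $y$-frequencies is essential: it converts the loss incurred in $L^\infty_y$ into an $\ell^2_k$ gain, producing effective smoothing of order $\sigma < s$ in the $y$ direction.

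The second step, which I expect to be the main obstacle, is the \emph{paradifferential reformulation and gauge}. One must isolate the paralinear term $T_{|u|^2} u$ inside the nonlinearity, which at regularity $s < 1/2$ is not perturbative and is the analytic manifestation of the quasilinear behavior detected in Appendix~\ref{sec:ill-posedness}. Following Bringmann, I would introduce a phase $\Phi$ depending on $|F^\omega|^2$ solving a transport-type equation, and set $\tilde v = e^{-i\Phi} v$; the conjugated equation for $\tilde v$ will then feature a genuinely smoothing nonlinearity. The delicate point compared to~\cite{bringmann2021-ansatz} is that the dispersive symbol $\xi^2 - |\eta|$ is non-local and only of order one in $\eta$, so commutators of the gauge with $|D_y|$ must be controlled via anisotropic paradifferential calculus. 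The correction $\Psi$ is defined so that $(i\partial_t + \partial_{xx} - |D_y|)\Psi$ equals precisely the resonant trilinear contribution in $F^\omega$ that would otherwise obstruct gaining regularity, and I would show $\Psi \in X_{T_0}^{s',\sigma}$ with $s' > s$ using the multilinear probabilistic bounds from the previous step; the threshold $s > 13/28$ should arise from optimizing the regularity gain in the worst such trilinear term against the derivative loss in the half-wave Strichartz estimates.

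With these ingredients the equation for $v$ (or $\tilde v$) becomes subcritical and can be closed by a contraction mapping in a space of regularity $s'$ on a deterministic interval $[-T_0, T_0]$, provided one restricts to the random event on which all the probabilistic estimates hold, which has probability converging to one and contains a random local-existence time $T^\omega > 0$. The same argument applied to $u_n$ with data $P_{\leq n} f_0^\omega$ yields the smooth approximating solutions with lifespan uniform in $n$ on this event. Finally, the convergence~\eqref{eq:convergence} is obtained by writing $u - u_n = (F^\omega - F_n^\omega) + (\Psi - \Psi_n) + (v - v_n)$ and bounding each difference: the first term vanishes in $L^2(\Omega; X_{T_0}^{s,\sigma})$ by construction of the randomization, the second by the multilinear probabilistic estimates applied to the frequency-truncated differences, and the third by a stability version of the contraction estimate.
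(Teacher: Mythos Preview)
Your proposal misidentifies Bringmann's refined ansatz and, as a result, the scheme you describe would not close. The decomposition $u = F^\omega + \Psi + v$ with $F^\omega$ the \emph{free} linear evolution and $\Psi$ an explicit first-iterate correction is precisely what fails here: as the paper notes (and as Oh showed for Szeg\H{o}), the second Picard iterate of the randomized free evolution does not gain any regularity over $F^\omega$ because of the high-low-low interactions, so no finite-order explicit correction $\Psi$ can absorb the obstruction. Likewise, the gauge transformation $\tilde v = e^{-i\Phi} v$ you attribute to Bringmann is not part of his method (nor of this paper); there is no phase conjugation anywhere in the argument.

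What the paper actually does---and what Bringmann's ansatz really is---is an \emph{induction on dyadic frequency scales}. At step $N$, one already has the approximate solution $u_{N/2}$, and the new piece $v_N = u_N - u_{N/2}$ is split as $F_N + w_N$, where $F_N$ is \emph{not} the free evolution of $P_N f_0^\omega$ but the solution of the \emph{linear} equation
\[
(i\partial_t + \A)F_N = \mathcal{N}(F_N, P_{\leq N^\gamma} u_{N/2}, P_{\leq N^\gamma} u_{N/2}),\qquad F_N(0)=P_N f_0^\omega,
\]
with potential built from the low frequencies of the previous approximation. This adapted linear evolution is the object that absorbs the high-low-low interaction; one proves it remains frequency-localized near $N$ (in recentered Besov spaces $B^{\rho,\gamma}_{k,D}$) and satisfies probabilistic Strichartz estimates with loss $N^{\gamma/2-\sigma}$ rather than $N^{1/2}$, by exploiting the independence between $P_N f_0^\omega$ and $u_{N/2}$ (which is $\mathcal B_{<N/2}$-measurable). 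The remainder $w_N$ then solves an equation from which the singular interaction has been removed, and gains regularity to $\nu>1/2$. Because this is an infinite induction and each step requires a fixed point, one cannot work pathwise; the paper uses the De Bouard--Debussche truncation to obtain a uniform $T_0$ in $L^2_\omega$, and only afterwards shows the truncation is inactive on a random interval $[-T^\omega,T^\omega]$. The threshold $13/28$ comes from optimizing the three constraints $\sigma'+\gamma/2-s-\gamma\sigma<0$, $\nu-s-\gamma\sigma'<0$, $\nu-\sigma'-\gamma\nu<0$ over $\gamma\in(0,1)$, not from a single trilinear term. Your outline contains none of this structure.
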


\begin{remark}[Time of existence]
Theorem 1.1 provides a uniform time of existence $T^\omega>0$ for the smooth solutions $u_n$ initiated from regularized initial data  $(P_{\leq n}f_0^\omega)_n$ in the \emph{statistical} ensemble. This is not a consequence of the known local well-posedness result from~\cite{BahriIbrahimKikuchi2020remarks}. Then, the main claim of Theorem~1.1 is the  convergence of $(u_n)_n$ to a strong solution $u$ in $\mathcal{H}^s$, on the time interval $[-T^\omega,T^\omega]$. 
\end{remark}

\begin{remark}[Regularizing sequence]
Note that the regularizing sequence is general and not dyadic. Actually, we first prove the result for dyadic frequencies $N$ and deduce the convergence for general approximations by adapting an argument from~\cite{SunTzvetkov2021derivative}.
\end{remark}
\begin{remark}[Other models]
Theorem 1.1 is an adaptation to Schrödinger-type equations of the quasilinear scheme from~\cite{bringmann2021-ansatz}, and allows addressing in the same way other weakly dispersive models. For instance, the case of the one-dimensional half-wave equation and of the Szeg\H{o} equation are both contained in our analysis by simply forgetting about the variable $x$ everywhere. 
\end{remark}


\begin{remark}[Nonlinearity]
We decided to only consider the cubic nonlinearity on $\R^2$ for simplicity. But up to some technicalities, other nonlinearities $p\geq5$ and other geometries (e.g. on the wave guide $\R\times\T$) would work.
\end{remark}

\begin{remark}[Sobolev threshold]
The threshold $s>s_0$ with $s_0=13/28$  is a convenient fractional approximation of the exponent $s_0$ we will actually get in~\eqref{eq:opti}. This is by no means optimal, and it would be interesting to go all the way down to $s_0=1/4$ to cover the whole quasilinear range of exponents.
\end{remark}
~\\
\paragraph{\textbf{Background}} This work is part of the study of nonlinear weakly dispersive equations, in the presence of randomness.
~\\

\emph{Weakly dispersive equations}. There has been a rich activity on the qualitative behavior of solutions to nonlinear evolution equations with weak dispersion. Examples are the half-wave equation~\cite{Pocovnicu2011,GerardLenzmannPocovnicuRaphael2018} and other fractional Schrödinger equations~\cite{FrankLenzmann2013,GuoPuHuang2015,SunTzvetkov2021derivative}, the Szeg\H{o} equation~\cite{GerardGrellier2010,GerardGrellier2012invariant}, the Schrödinger equation on compact manifold~\cite{bgt-05}, the Schrödinger equation on the Heisenberg group~\cite{Gerard2006} for which there is a lack of dispersion in one direction, the Kadomtsev-Petviashvili equations~\cite{hadac2009} and many other models. It is in general quite challenging to solve the Cauchy problem for these equations at low regularity. The reason being that in such contexts, dispersive estimates come with a derivative loss. In some special cases where the Hamiltonian equation is integrable in the sense that it has a Lax pair structure providing infinite conservation laws, methods from integrable systems are substituted for the analytical perturbative approach, to study both Cauchy theory and the long-time dynamics.

A first illustrative example one can think of is the Szeg\H{o} equation, which is known to be globally well-posed in $H^{\frac 12}_+(\T)$ since~\cite{GerardGrellier2010}. Using the Lax pair structure, the flow map was extended to $\operatorname{BMO}$ in~\cite{GerardKoch2016}, then the Cauchy problem was recently shown to be globally well-posed even on $L^2_+(\T)$ in~\cite{GerardPushnitski2022} also by using integrable techniques.
Note that the cubic Szeg\H{o} equation is invariant by a $L^4_+(\T)\hookrightarrow H^{\frac 14}_+(\T)$-critical scaling.
Another model is the derivative Schrödinger equation on the line, which is $L^2$-critical with respect to the scaling, but the flow map fails to be uniformly continuous in $H^s(\R)$ when $s<\frac 12$~\cite{BiagioniLinares2001,Takaoka2001}. Nevertheless,~\cite{HarropKilipNtekoumeVisan2022} recently proved from integrable techniques that the equation is globally well-posed in $L^2(\R)$. Previously, integrable techniques were combined with concentration-compactness argument in~\cite{BahouriPerelman2020} to prove global well-posedness in $H^{\frac 12}(\R)$.
A third model is the Schrödinger equation on the Heisenberg group, for which local well-posedness is only known to hold in adapted Sobolev spaces $H^s(\mathbb{H}^1)$ for $s>2$, but is $H^{\frac 12}$ scaling-invariant, moreover, the flow map is known not to be of class $\mathcal{C}^3$ for $\frac{1}{2}<s<2$. The lack of dispersion comes from a special direction, the vertical direction, where the equation is  a transport equation rather than a dispersive one. 

For the Grushin Laplacian (a simplified version of the Heisenberg Laplacian), a probabilistic approach was attempted in~\cite{schroheis3}, proving almost-sure local well-posedness according to a specific randomization procedure which penalizes the direction where there is no dispersion, and gain derivatives in $L^p$ spaces. Finally, we stress out that it is sometimes possible to close the derivative gap (namely $1/4<s<1/2$ in our situation) \emph{deterministically} by considering more general function spaces than the Sobolev spaces, such as Fourier-Lebesgue spaces, see e.g.~\cite{Grunrock2005} for the derivative NLS equation,  and~\cite{Grunrock2011-NLW} for the wave equation.




~\\

\emph{Probabilistic Cauchy theory and its limitations}. The so-called probabilistic Cauchy theory was initiated by Bourgain~\cite{bourgain96-gibbs} and Burq, Tzvetkov~\cite{burq-tzvetkov-2008I,burq-tzvetkov-2008II}, and can be sketched as follows. When $s<s_c$ is below a critical threshold $s_c$ so that instabilities are known to occur, a general initial data in $\mathcal{H}^s$ may have better integrability properties than expected by the Sobolev embedding. In contexts when the dispersion is strong enough to give some local energy decay or bilinear estimates, or for wave-type equations where the Duhamel formula gains one derivative, one can exploit such an enhanced integrability property to prove that the Picard's iterations are smoother than the linear evolution of the initial data. A rigorous analysis of this observation makes it possible to evidence strong solutions initiated from \emph{statistical} initial data, according to a non-degenerate probability measure which charges any open set in $\mathcal{H}^s$.


A \emph{randomized} initial data is typically defined from a given function $\phi\in\mathcal{H}^s$ and its unit-scale frequency decomposition $(\phi_n^\omega)_n$ in the frequency space (or according to a spectral resolution of the Laplace operator in compact settings), where each mode is decoupled by a sequence $(g_n(\omega))_n$ of normalized independent Gaussian variables, defined on a probability space $(\Omega,\mathcal{F},\mathbb{P})$: 
\[
\omega\in\Omega\mapsto\phi^\omega \sim \sum_{n} g_n(\omega)\phi_n\,,\quad \text{where}\ \phi\sim\sum_n\phi_n\in\mathcal{H}^s\,.
\]
Then, for \emph{many} initial data $\phi^\omega$ in a statistical ensemble $\Sigma\subset \mathcal{H}^s$ which has full measure, one expects to observe a \emph{nonlinear smoothing effect} for the recentered solution around the linear evolution thanks to the combination of space-time oscillations (dispersion) and probabilistic oscillations (randomization). Namely, the goal is to show that for some $\nu>s_c$,
\[
v(t) := u(t) - \e^{it\Delta}\phi^\omega \in \mathcal{C}(\intervalcc{-T}{T};\mathcal{H}^\nu) \quad \text{for all}\ \phi^\omega\in\Sigma\,.
\]
The above recentered solution $v$ is obtained from a fixed point argument at subcritical regularities in $\mathcal{H}^\nu$ and solves the original equation perturbed by stochastic terms stemming from the linear evolution $\e^{it\Delta}\phi^\omega$. We refer to~\cite{burq-tzvetkov-2008I,burq-tzvetkov-2008II} for an introduction to the probabilistic Cauchy theory for dispersive equations.

Nevertheless, to observe the aforementioned probabilistic smoothing effect, we need to exploit dispersive properties of the equations to gain decay without trading regularity. In equation~\eqref{eq:NLSHW}, however, there is no dispersion in the $y$-direction. Therefore, in the low $\widehat{x}$-frequency regimes, the Strichartz estimates come with a derivative loss, so that we have neither usable bilinear estimate, nor local smoothing estimates at our disposal. Besides, the second Picard iteration of the randomized initial data does not have a better regularity than the initial data. To see this, we consider \emph{high-low-low} type interactions in high $y$-frequencies $|\eta|\gg 1$, for an initial data $\phi^\omega$ projected at low $x$-frequencies $|\xi|\lesssim 1$. For simplicity, we assume that there is no dependence in the variable $x$, then the {\it high-low-low} interactions take on the form
\begin{equation*}
\label{eq:transport-picard}
     \int_0^t\e^{-i(t-\tau)\abs{D_y}}P_{|\eta|\gg1}\e^{i\tau|D_y|}\phi^\omega \overline{P_{\abs{\eta}\leq1}\e^{i\tau|D_y|}\phi^\omega}P_{\abs{\eta}\leq1}\e^{i\tau|D_y|}\phi^\omega\dd\tau.
\end{equation*}
Such interactions are basically transported by the half-wave linear flow, so that every derivative of the second Picard's iteration can fall onto the first linear term. Hence, one can only handle $s$ derivatives instead of the desired $\nu$ derivatives. Similarly, Oh~\cite{Oh2011remarks} considered the Szeg\H{o} equation on the circle and proved that  the first nontrivial Picard's iterate does not gain regularity compared to the initial data.

To undertake this type of issues,  Bringmann~\cite{bringmann2021-ansatz} developed a refined probabilistic ansatz in a quasilinear setting thanks to paracontrolled calculus. In the present work, we adapt the strategy from~\cite{bringmann2021-ansatz} to prove almost-sure local well-posedness below the energy space, in the quasilinear regime. We conclude this paragraph by mentioning that the paracontrolled approach was further developed in a spectacular way in a series of papers of Deng, Nahmod and Yue~\cite{deng-nahmod-yue-averaging,DNYHartree} and Sun, Tzvetkov~\cite{SunTzvetkov2021derivative} bringing tools from random matrix theory and introducing powerful methods such as the random averaging operators and the random tensors. More recently, the paracontrolled approach has taken a step forward, and were successfully implied in the resolution by Bringmann, Deng, Nahmod and Yue~\cite{bdny} of the $\phi_4^3$ problem for NLW.  
~\\

\paragraph{\bf Consequences of Theorem~\ref{theo:main} and perspectives} 

\begin{enumerate}[-]
    \item From the perspective of probabilistic Cauchy theory,
 equation~\eqref{eq:NLSHW} is the first dispersionless Schrödinger-type equation for which we prove probabilistic well-posedness in a quasilinear regime, where the second Picard's iteration does not gain any regularity. Indeed, we provide a measure induced by a \emph{mild} unit-scale and one-directional randomization procedure in part~\ref{eq:randomization}. In contrast with~\cite{schroheis3} on the Schrödinger equation on the Heisenberg group, the randomization does not gain any regularity in $L^p$-spaces and does not penalize the dispersionless direction. We believe that the present approach would also be successful in this context.
\item From the deterministic perspective, Theorem~\ref{theo:main} yields a statistical ensemble $\Sigma$ (a dense full-measure set) of initial data leading to strong solutions, in regimes where we show that the equation is semi-linearly ill-posed.  
The theorem also provides strong solutions in the energy space, which is a first modest step in the comprehension of the long-time Cauchy theory of~\eqref{eq:NLSHW}, motivated by the rich possible asymptotic behaviors of solutions studied in~\cite{Xu2017unbounded} in the defocusing case and~\cite{ BahriIbrahimKikuchi2021transverse, BahriIbrahimKikuchi2020remarks} in the focusing case. In contrast to the half-wave equation or the Szeg\H{o} equation on the line, local existence in the energy space $\mathcal{H}^\frac{1}{2}$ does not follow from a Yudovich argument since the $L^q$-norms are not controlled by the energy when $q>6$. We cannot use the conservation of the energy together with a Brezis-Gallouët estimate either in oder to show that in the defocusing case, smooth solutions extend globally in time, since $\mathcal{H}^s$ is not an algebra when $s<\frac{3}{4}$, and there is no conservation law that controls the $\mathcal{H}^\frac{3}{4}$-norm.
\item In addition, equation~\eqref{eq:NLSHW} can serve as a model to study the long-time behavior of probabilistic solutions generated by the paracontrolled decomposition, in the absence of an invariant measure. Indeed, we construct a probabilistic solution in the presence of a conserved energy which is not enough to globalize the solutions since \emph{probabilistic} information is crucial in the iteration scheme. However, to understand how this information is transported by the flow and to prove quasi-invariance of the measure, one needs dispersion (~\cite{OhSosoeTzvetkov}). Hence, understanding the long-time behavior of the solutions provided by Theorem~\ref{theo:main} requires the combination of energy methods with some measure-theory argument. This is a challenging but interesting problem. 
\end{enumerate}
~\\
\paragraph{\textbf{Strategy of the proof}}
The proof amounts to show the convergence of $(u_n)_{n\in\N}$ (whose existence is ensured by Theorem 1.6 in~\cite{BahriIbrahimKikuchi2020remarks} recalled in Proposition~\ref{prop:lwp}), on a fixed time interval. First, we establish the convergence along the subsequence $(u_N)_{N\in2^\N}$ by adapting the iterative scheme from~\cite{bringmann2021-ansatz}, which is made up of an induction on frequencies. At every step $N$ of the induction, we consider separately the problematic {\it high-low-low} frequency interactions, absorbed in the \emph{adapted linear evolution} $F_N$ studied in Section~\ref{sec:linear_evolution} and solution to
\[
\begin{cases}
i\partial_tF_N + (\partial_{xx}^2-|D_y|)F_N = \mathcal{N}(F_N,P_{\leq N^\gamma} u_{\frac{N}{2}},P_{\leq N^\gamma}u_{\frac{N}{2}})\,.\\
F_N(0) = P_Nf_0^\omega\,,
\end{cases}
\]
We get {\it probabilistic Strichartz estimates} for small times, controlling the $L^\infty$ norm of $F_N$ with a loss $N^{\frac\gamma2-\sigma}$ instead of $N^{\frac 12}$. To achieve such a goal, we exploit the probabilistic structure of the initial data $f_0^\omega$, which the sum of terms frequency localized in unit-scale intervals and decoupled by independent Gaussian variables. A key ingredient is the independence between the approximate solution $u_{\frac N2}$, which is constructed from the low modes  $P_{\leq \frac{N}{2}}f_0^\omega$ of the initial data, and $P_Nf_0^\omega$. As for the remainder $w_N$, it solves the equation with zero initial data and stochastic forcing terms but without the singular interaction
\[
\mathcal{N}_s(F_N,u_{\frac N2},u_{\frac N2}) :=\mathcal{N}(F_N,P_{\leq N^\gamma} u_{\frac{N}{2}},P_{\leq N^\gamma}u_{\frac{N}{2}})\,.
\]
In order to get convergence on a fixed time interval, not depending on the iteration step $N$, we follow~\cite{bringmann2021-ansatz} by making use of the {\it truncation method} from De Bouard and Debussche~\cite{DeBouardDebussche99} sketched in Section~\ref{sec:truncation}. We stress out that we only do this in the half-wave variable $y$, whereas a deterministic analysis is performed in the Schrödinger variable $x$. Namely, we exploit the dispersion materialized by the Strichartz estimates and use mixed Lebesgue spaces and $TT^*$-type argument instead of Gronwall inequalities and energy estimates obtained in~\cite{bringmann2021-ansatz}. 
~\\
\paragraph{\textbf{Outline of the paper}}

The paper is organized as follows. We first introduce the unit-scale one-directional randomization of the initial in Section~\ref{sec:prelim}, and prove refined Strichartz estimates for frequency localized functions. Then, we define in Section~\ref{sec:scheme} the dyadic subsequence of smooth approximate solutions $u_N$ and set up the truncation method. The adapted linear evolution is handled in Section~\ref{sec:linear_evolution}, and the a priori bounds for the nonlinear remainders are established in Section~\ref{sec:nonlinear_evolution}. These bounds rely on paracontrolled trilinear estimates that are detailed in Section~\ref{section:trilinear_proof}. Since the nonlinearity is cubic, we have to control more various types of interactions than in~\cite{bringmann2021-ansatz} where the nonlinearity is quadratic. 
Finally, in Section~\ref{sec:conclusion}, we show convergence of the approximating sequence $(u_N)_N$ to a limit $u$ on a fixed interval, and we prove that it solves the equation~\eqref{eq:NLSHW} on a smaller time interval, which is almost-surely nonempty. Then we deduce the convergence of the whole sequence $(u_n)_n$. We conclude by detailing some ill-posedness results for equation~\eqref{eq:NLSHW} in Appendix~\ref{sec:ill-posedness}.
~\\
\paragraph{\textbf{Notation}}
We denote the linear operator $\mathcal{A}=\partial_{xx}^2 - \abs{D_y}$, and the interactions stemming from the cubic nonlinear term by
\[
\mathcal{N}(u) = \abs{u}^2u\,,\quad \mathcal{N}(u_1,u_2,u_3) = \overline{u_1}u_2u_3 + u_1\overline{u_2}u_3 + u_1u_2\overline{u_3}\,.
\]

We write $\mathcal{F}$ or $\mathcal{F}_{y\to\eta}$ the partial Fourier transform with respect to the half-wave direction $y$, by $\eta=\widehat{y}$ the corresponding Fourier frequency, and by $\mathcal{F}^{-1}$ or $\mathcal{F}_{\eta\to y}^{-1}$ the inverse Fourier transform. For fixed $\alpha\in\R$, a parameter $\beta$ satisfies the relation $\beta=\alpha-0$ if there exists $\varepsilon>0$ such that $\beta=\alpha-\varepsilon$, where $\varepsilon>0$ can be chosen arbitrarily close to zero independently of the parameters. 

Given two Banach spaces $E,F$, we denote $\mathcal{B}(E\to F)$ the set of bounded operators from $E$ to $F$.
\section{Set up and preliminaries}\label{sec:prelim}

Since the standard Strichartz estimates for the one-dimensional free Schrödinger evolution provide control on the norm $L_x^\infty(\R)$ without losing derivatives, we only need to consider randomized initial data in the $y$-direction as in Figure~\ref{fig:Pnk}. We fix $\varphi\in\mathcal{C}_c^{\infty}(\R,[0,1])$ supported in $[-1,1]$ and equal to $1$ on $[-\frac 12,\frac 12]$ such that $\{\varphi(\cdot-k)\colon k\in\Z\}$ form a partition of unity, and set $\psi(x)=\varphi(x)-\varphi\left(\frac{x}{2}\right)$. For dyadic $M$, the spectral projector $P_{M,k}$ around $k\in\Z$ of size $M$ for $y$-frequencies is defined  as follows (see Figure~\ref{fig:Pnk}). For $f_0\in L^2(\R^2)$ we set
\begin{align*}
    P_{1,k}f_0&=\mathcal{F}_{y\to\eta}^{-1}\parent{\varphi(\eta-k)\mathcal{F}_{y\to\eta} f_0}\,,\\
P_{M,k}f_0&=\mathcal{F}_{y\to\eta}^{-1}\parent{\psi\left(M^{-1}(\eta-k)\right)\mathcal{F}_{y\to\eta}f_0}\,.
\end{align*}
Given a dyadic integer $N\geq1$, $P_N :=P_{N,0}$ denotes the Littlewood-Paley projector at frequencies $\frac{N}{2}\leq\eta\leq 2N$ in the $y$-direction only, and $\widetilde {P_N}$ denotes a similar fattened projector using a function~$\widetilde\psi$ with a slightly larger support. Let $(g_k)_k$ be a sequence of complex-valued independent normalized Gaussian variables.
Given $f_0\in \mathcal{H}^s(\R^2)$, we associate a random function obtained by the Wiener randomization
\begin{equation}
    \label{eq:randomization}
    f_0^\omega = \sum_{k\in\Z} g_{k}(\omega)P_{1,k}f_0\,.
\end{equation}
This randomization induces a non-degenerate probability measure on $\mathcal{H}^s(\R^2)$, densely supported (see~\cite{burq-tzvetkov-2008I} Lemma B.1). In addition, for and $N\in2^\N$, we set
\[
P_1f_0^\omega = g_0(\omega)P_1f_0\,,
\quad P_Nf_0^\omega = \sum_{\frac{N}{2}\leq\abs{k}< N} g_{k}(\omega)P_{1,k}f_0\,,
\quad P_{\leq N}f^\omega_0 = \sum_{M\leq N}P_{M}f^\omega_0\,.
\]
Finally, we emphasize that the family $\set{P_Nf_0^\omega}_{N\geq1}$ is jointly independent.
\begin{figurehere}
\label{fig:Pnk}
\begin{center}
\begin{tikzpicture}[scale=0.6]
\path [fill=GreenYellow] (-10,2) rectangle (10,8);
\path [fill=white] (-10,5.5) rectangle (10,4.5);
\draw[->, thick] (-10,0)--(10,0) node[right]{$\xi$};
\draw[->, thick] (0,-0.5)--(0,9) node[above]{$\eta$};
\draw (0,5) node{$+$} node[right]{$k$};
\draw[<->, thick, OliveGreen] (3,4.5)--(3,5.5);
\draw (3,5) node[right]{{\color{OliveGreen}$\frac{M}{2}$}};
\draw[<->, thick, OliveGreen] (5,2)--(5,8);
\draw (5,5) node[right]{{\color{OliveGreen}$2M$}};
\end{tikzpicture}
\caption{Support of the projection $P_{M,k}$ in the frequency space $(\R_\xi,\R_\eta)$.}
\end{center}
\end{figurehere}

\subsection{Dispersive estimates with a derivative loss}

We say that the pair $(p,q)$, with $2\leq p,q\leq\infty$, is admissible if it is a Schrödinger-admissible pair in dimension $1$, i.e.
\begin{equation}
\label{eq:adm}
    \frac{2}{p}+\frac{1}{q} = \frac{1}{2}\,.
\end{equation}
\begin{lem}[Bernstein - Strichartz estimates]
\label{lem:str} Given a Borel function $m:E\to \C$ with support in a bounded measurable set $E\subset\R$ of finite Lebesgue measure $\abs{E}<\infty$, we define $T_E$ to be the linear evolution  frequency-localized in $E$:
\[
T_E = \e^{it\A}\mathcal{F}_{\eta\to y}^{-1}(m(\eta) \mathcal{F}_{y\to\eta})\in\mathcal{B}(L_{x,y}^2(\R^2)\to L_{x,y}^2(\R^2) )\,,\quad t\in\R\,.
\]
Given $(p,q)$ and $(\widetilde{p},\widetilde{q})$ two admissible pairs as in~\eqref{eq:adm}, and $r\in[2,\infty]$, we have the following Strichartz estimates with a derivative loss:
\begin{itemize}
    \item Bernstein-Strichartz estimate
    \begin{equation}
        \label{eq:str}
        \norm{T_Ef_0}_{L_t^pL_x^qL_y^r} \lesssim \abs{E}^{\frac{1}{2}-\frac{1}{r}} \norm{f_0}_{L_{x,y}^2}\,,
    \end{equation}
    \item $T^*$-estimate
    \begin{equation*}
        \norm{\int_\R\e^{-i\tau\A}\mathcal{F}_{\eta\to y}^{-1}(m(\eta) \mathcal{F}_{y\to\eta})f_0(\tau)\dd\tau}_{L_{x,y}^2} \lesssim \abs{E}^{\frac{1}{2}-\frac{1}{r}} \norm{f_0}_{L_t^{p'}L_x^{q'}L_y^{r'}}\,,
    \end{equation*}
    \item $TT^*$-estimate
        \begin{equation}
        \label{eq:TTstar}
        \norm{\int_\R\mathbf{1}_{0\leq\tau\leq t} \e^{i(t-\tau)\A}\mathcal{F}_{\eta\to y}^{-1}(m(\eta) \mathcal{F}_{y\to\eta})f_0(\tau)\dd\tau}_{L_t^pL_x^qL_y^r} \lesssim \abs{E}^{\frac{1}{2}-\frac{1}{r}} \norm{f_0}_{L_t^{\widetilde{p}'}L_x^{\widetilde{q}'}L_y^2}\,.
    \end{equation}
\end{itemize}
\end{lem}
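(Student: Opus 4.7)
The plan is to decouple the two spatial directions: in $x$ I would invoke the full one-dimensional Schrödinger Strichartz inequality, while in $y$ the half-wave propagator $e^{-it|D_y|}$ is an isometry on every $L^r_y$, so the derivative loss $|E|^{1/2-1/r}$ has to be produced entirely by the frequency localization to $E$ via Bernstein's inequality. The starting point is that the multipliers $-\xi^2$ and $-|\eta|$ act on different variables, so $e^{it\A}$ factors cleanly as $e^{it\partial_{xx}^2}\,e^{-it|D_y|}$, with the first factor responsible for dispersion and the second merely a phase in $\eta$.

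\smallskip

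For the Bernstein-Strichartz bound~\eqref{eq:str}, set $u_\eta(t,x):=[e^{it\partial_{xx}^2}\mathcal{F}_{y\to\eta}f_0(\cdot,\eta)](x)$, so that the partial $y$-Fourier transform of $T_Ef_0$ equals $m(\eta)\,e^{-it|\eta|}\,u_\eta(t,x)$. Plancherel in $y$ followed by Bernstein's inequality (using $\operatorname{supp} m\subset E$ with $|E|<\infty$) gives, pointwise in $(t,x)$,
\[
\|T_Ef_0(t,x,\cdot)\|_{L^r_y}\lesssim |E|^{1/2-1/r}\Bigl(\int |m(\eta)|^2\,|u_\eta(t,x)|^2\,d\eta\Bigr)^{1/2}.
\]
Taking the $L^p_tL^q_x$-norm, interchanging with $L^2_\eta$ via Minkowski (legal because $p,q\geq 2$), and invoking the one-dimensional $(p,q)$-Schrödinger Strichartz inequality for each slice $u_\eta$ produces~\eqref{eq:str}. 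The $T^*$-estimate is then the formal dual: writing $T_Ef_0=e^{it\A}Mf_0$ with $M$ the Fourier multiplier by $m$, the adjoint $T_E^*$ is, up to complex conjugation of $m$ (which does not affect the bound), the operator displayed in the second bullet, and $(L^p_tL^q_xL^r_y)^*=L^{p'}_tL^{q'}_xL^{r'}_y$ closes the argument.

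\smallskip

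For the retarded $TT^*$ estimate~\eqref{eq:TTstar}, I introduce the unweighted free evolution $Sf_0:=e^{it\A}f_0$. Repeating the first step with no frequency cutoff and $r=2$ shows $S:L^2_{x,y}\to L^{\widetilde p}_tL^{\widetilde q}_xL^2_y$, whence by duality $S^*:L^{\widetilde p'}_tL^{\widetilde q'}_xL^2_y\to L^2_{x,y}$, both with no derivative loss. Since $M$ commutes with $e^{is\A}$, the composition
\[
T_ES^*f_0(t)=\int_\R e^{i(t-\tau)\A}\,Mf_0(\tau)\,d\tau
\]
maps $L^{\widetilde p'}_tL^{\widetilde q'}_xL^2_y$ into $L^p_tL^q_xL^r_y$ with constant $|E|^{1/2-1/r}$. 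The retarded truncation $\mathbf{1}_{0\leq\tau\leq t}$ is recovered from the Christ-Kiselev lemma, whose hypothesis $p>\widetilde p'$ is automatic: one-dimensional Schrödinger admissibility $2/p+1/q=1/2$ forces $p\geq 4$ and likewise $\widetilde p\geq 4$, so $\widetilde p'\leq 4/3<p$. The only (mild) obstacle is bookkeeping — verifying that Minkowski's swap is available whenever it is applied and that Christ-Kiselev's exponent condition holds in every pairing used later; both are ensured by $p,q,r\geq 2$ and $p,\widetilde p\geq 4$.
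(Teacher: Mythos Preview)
Your proof is correct and follows essentially the same route as the paper: Bernstein in $y$ to pass from $L^r_y$ to $L^2_y$, unitarity of the half-wave group on $L^2_y$, Minkowski to swap $L^p_tL^q_x$ with $L^2_\eta$ (using $p,q\geq 2$), then the one-dimensional Schr\"odinger Strichartz estimate slice by slice; duality for the $T^*$ bound; and the composition $T_E\circ S^*$ (taking $r=2$ on the $S^*$ side so that no loss occurs there) together with Christ--Kiselev for the retarded $TT^*$ estimate, whose exponent hypothesis you verify explicitly. One small quibble: your opening remark that $e^{-it|D_y|}$ is an isometry on \emph{every} $L^r_y$ is not accurate --- it is an isometry only on $L^2_y$ (for $1<r<\infty$ it is merely uniformly bounded, since it splits as two translations composed with Szeg\H{o} projections) --- but this plays no role in the body of your argument, where you correctly work on the Fourier side in $L^2_\eta$.
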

Notice that when $r=2$ there is no loss in the Bernstein estimate and we can choose $E=\R$.
\begin{proof}
In the mixed Lebesgue norms, we first use the unit-scale Bernstein estimate in the $y$-direction to pass from $L_y^r$ to $L_y^2$, and then the unitarity of $\e^{it\abs{D_y}}$ on $L_y^2$ to reduce the matter to the free Schrödinger evolution on the line. There holds
    \[
    \norm{T_Ef_0}_{L_t^pL_x^qL_y^r}\lesssim \abs{E}^{\frac{1}{2}-\frac{1}{r}}\norm{\e^{it\A}f_0}_{L_t^pL_x^qL_y^2}= \abs{E}^{\frac{1}{2}-\frac{1}{r}}\norm{\norm{\e^{it\partial_{xx}^2}f_0}_{L_y^2}}_{L_t^pL_x^q}\,.
    \]
Subsequently, we use the Minkowski inequality and the Strichartz estimate for $\e^{it\partial_{xx}^2}$ in the $x$-direction to conclude that
\[
\abs{E}^{\frac{1}{2}-\frac{1}{r}}\norm{\norm{\e^{it\partial_{xx}^2}f_0}_{L_y^2}}_{L_t^pL_x^q}\lesssim \abs{E}^{\frac{1}{2}-\frac{1}{r}}\norm{\norm{\e^{it\partial_{xx}^2}f_0}_{L_t^pL_x^q}}_{L_y^2}\lesssim \abs{E}^{\frac{1}{2}-\frac{1}{r}}\norm{f_0}_{L_{x,y}^2}\,.
\]
The $T^*$-estimate follows from duality, and the $TT^*$-estimate is a consequence of the first two estimates, by using the $T^*$ estimate with $r=2$. We also need an application of the standard Christ-Kiselev argument to localize in time  $0\leq\tau\leq t$.
\end{proof}

\subsection{Local existence above the energy space}

We recall the best know local well-posedness result for equation~\eqref{eq:NLSHW}.
\begin{prop}[Local well-posedness above the energy space,~\cite{BahriIbrahimKikuchi2020remarks} Theorem 1.6]
\label{prop:lwp}
Let $s>\frac 12$. For every $f_0\in \mathcal{H}^s(\R^2)$, there exists $T=T(\|f_0\|_{\mathcal{H}^s})>0$ such that equation~\eqref{eq:NLSHW} admits a unique local solution in $\mathcal{C}((-T,T),\mathcal{H}^s)$. 
\end{prop}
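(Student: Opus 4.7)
The proposal is to reduce the problem to a contraction mapping argument on the Duhamel formulation
\[
\Phi(u)(t) = e^{it\mathcal{A}}f_0 - i\mu\int_0^t e^{i(t-\tau)\mathcal{A}} |u|^2u\,\mathrm{d}\tau,
\]
set in a mixed space of the form
\[
X_T := \mathcal{C}\bigl([-T,T];\mathcal{H}^s\bigr)\,\cap\, L^p\bigl([-T,T]; L^q_x L^\infty_y\bigr),
\]
for a Schrödinger-admissible pair $(p,q)$, for instance $(p,q)=(8,4)$. Since $s>1/2$, the one-dimensional Sobolev embedding $H^s_y\hookrightarrow L^\infty_y$ allows us to trade the $L^\infty_y$ norm for $s$ derivatives in the half-wave direction, at which point the derivative loss encountered in Lemma~\ref{lem:str} is absorbed by the regularity of the initial data rather than paid for by a frequency cutoff.

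First, I would establish the linear bound. Combining Sobolev embedding in~$y$ with the lossless ($r=2$) case of Lemma~\ref{lem:str} gives
\[
\|e^{it\mathcal{A}}f_0\|_{L^p_TL^q_xL^\infty_y}\lesssim \|e^{it\mathcal{A}}f_0\|_{L^p_TL^q_xH^s_y}\lesssim \|f_0\|_{L^2_x H^s_y}\lesssim \|f_0\|_{\mathcal{H}^s},
\]
together with the obvious $\|e^{it\mathcal{A}}f_0\|_{\mathcal{C}_T\mathcal{H}^s}=\|f_0\|_{\mathcal{H}^s}$ from $L^2$-unitarity of the propagator. For the Duhamel term, the $TT^*$ estimate~\eqref{eq:TTstar} (with $r=2$, hence no loss) combined with Sobolev embedding yields the endpoint
\[
\Bigl\|\int_0^t e^{i(t-\tau)\mathcal{A}}F(\tau)\,\mathrm{d}\tau\Bigr\|_{X_T}\lesssim \|F\|_{L^1_T\mathcal{H}^s},
\]
so it remains to bound $\||u|^2u\|_{L^1_T\mathcal{H}^s}$ trilinearly.

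The heart of the matter is the nonlinear estimate. The fractional Leibniz rule in~$y$ at fixed $x$ gives pointwise in $x$ the bound
\[
\bigl\||u|^2 u(x,\cdot)\bigr\|_{H^s_y}\lesssim \|u(x,\cdot)\|_{L^\infty_y}^{2}\,\|u(x,\cdot)\|_{H^s_y},
\]
and integrating in $x$ by Hölder (distributing the exponents so that the two $L^\infty_y$ factors pick up $L^q_x$ and the remaining factor picks up $L^2_x$) yields
\[
\||u|^2 u\|_{L^1_T L^2_x H^s_y}\lesssim T^{1-2/p}\,\|u\|_{L^p_TL^q_xL^\infty_y}^{2}\,\|u\|_{L^\infty_TL^2_xH^s_y}.
\]
The $H^{2s}_xL^2_y$ component of the $\mathcal{H}^s$-norm is handled analogously: apply the fractional Leibniz rule in~$x$, use the embedding $H^{2s}_x\hookrightarrow L^\infty_x$ (valid since $2s>1$) and reuse $H^s_y\hookrightarrow L^\infty_y$ so that any two of the three factors sit in an $L^\infty_{x,y}$-type norm controlled by either the Strichartz norm or the $\mathcal{H}^s$-norm, leaving one factor to carry the $2s$ derivatives in~$x$. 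Collecting the estimates,
\[
\|\Phi(u)\|_{X_T}\lesssim \|f_0\|_{\mathcal{H}^s} + T^{\alpha}\|u\|_{X_T}^{3},\qquad \|\Phi(u)-\Phi(v)\|_{X_T}\lesssim T^{\alpha}\bigl(\|u\|_{X_T}^{2}+\|v\|_{X_T}^{2}\bigr)\|u-v\|_{X_T},
\]
for some $\alpha>0$, so Banach's fixed point theorem produces a unique solution in a ball of radius $\sim\|f_0\|_{\mathcal{H}^s}$ provided $T=T(\|f_0\|_{\mathcal{H}^s})$ is chosen sufficiently small. Uniqueness extends to the full class $\mathcal{C}((-T,T),\mathcal{H}^s)$ by a standard Gronwall-type comparison argument using the same trilinear estimate on the difference.

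The main obstacle is arranging the trilinear estimate within the anisotropic space $\mathcal{H}^s$: since this space is not an algebra in the range $1/2<s<3/4$, one cannot simply appeal to a product law but must carefully split the three factors between the Strichartz norm $L^p_TL^q_xL^\infty_y$ and the energy-type norm $\mathcal{C}_T\mathcal{H}^s$ in both the $H^s_y$ and $H^{2s}_x$ pieces, ensuring that exactly one factor absorbs the derivatives and the other two sit in $L^\infty$-type norms controlled via Sobolev embedding in the appropriate variable.
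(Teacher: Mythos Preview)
Your overall strategy---a contraction mapping in a Sobolev--Strichartz space, trading the $L^\infty_y$ norm for $s>\tfrac12$ derivatives via Sobolev embedding---matches exactly what the paper sketches (it does not prove the proposition, only cites \cite{BahriIbrahimKikuchi2020remarks} and mentions the fixed point in $L^\infty_t\mathcal{H}^s\cap L^4_tL^\infty_{x,y}$). However, the H\"older arithmetic in your proposal does not close with your specific choice $(p,q)=(8,4)$ and the dual norm $L^1_T L^2_x$.

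Concretely: you claim $\||u|^2u\|_{L^1_T L^2_x H^s_y}\lesssim T^{1-2/p}\|u\|_{L^p_TL^q_xL^\infty_y}^2\|u\|_{L^\infty_TL^2_xH^s_y}$, but H\"older in $x$ requires $\tfrac12=\tfrac2q+\tfrac12$, forcing $q=\infty$, not $q=4$. Likewise, in the $H^{2s}_xL^2_y$ piece you propose to place two factors in an $L^\infty_{x,y}$-type norm; neither the Strichartz norm $L^4_xL^\infty_y$ nor $\mathcal{H}^s$ (for $\tfrac12<s<\tfrac34$, where $\mathcal{H}^s$ is not an algebra and does not embed in $L^\infty_{x,y}$) provides this. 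There are two clean fixes: either switch to $(p,q)=(4,\infty)$ as the paper's sketch does, so that the two undifferentiated factors naturally sit in $L^\infty_{x,y}$; or keep $(8,4)$ but put the nonlinearity in $L^{4/3}_T L^1_x$ (the dual of the $(4,\infty)$ pair), where $1=\tfrac14+\tfrac14+\tfrac12$ closes in $x$ and $\tfrac12=0+0+\tfrac12$ closes in $y$---this is precisely what the paper itself does later in Section~\ref{sec:conclusion} for the $H^{2s}_xL^2_y$ component.
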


It is shown in~\cite{BahriIbrahimKikuchi2020remarks} that local well-posedness actually holds in $L^2_xH^s_y$. The proof follows from a fixed point argument in $L^\infty_t(\intervalcc{-T}{T};\mathcal{H}^s(\R^2))\cap L_t^4(\intervalcc{-T}{T};L_{x,y}^\infty(\R^2))$, for some $T>0$ depending on the $\mathcal{H}^s(\R^2)$-norm of the initial data. The use of the Strichartz space $L_t^4(\intervalcc{-T}{T};L_{x,y}^\infty(\R^2))$ requires that $s>\frac{1}{2}$ because of the lack of dispersion in the $y$-direction. Consequently, we have no control on the growth of the $\mathcal{H}^s$-norm of the solution since we do not have access to the conserved energy, which is at the level of $\mathcal{H}^\frac{1}{2}$. As discussed in the introduction, local existence for smooth solutions to~\eqref{eq:NLSHW} is an open problem. 

\section{Iteration scheme}\label{sec:scheme}

\subsection{The refined Ansatz}
We construct the solution to~\eqref{eq:NLSHW} by iteration on the frequencies, that we regroup as dyadic packets. Let $N\in2^\N$, and let $u_N$ be the maximal solution\footnote{Observe that the local well-posedness result from~\cite{BahriIbrahimKikuchi2020remarks}, recalled in Proposition~\ref{prop:lwp}, implies existence and uniqueness for $u_N$ in $\mathcal{H}^{\frac{1}{2}+}(\R^2)$ up to a time $T(n)>0$. In Theorem~\ref{theo:main} we claim that $(u_N)$ converges almost-surely in $\mathcal{H}^s(\R^2)$ to a strong solution to~\eqref{eq:NLSHW} up to a time $T^\omega>0$ almost-surely in the randomization.} to~\eqref{eq:NLSHW} with initial data localized at frequencies at most $N$
\begin{equation*}
    \begin{cases}
    (i\partial_t + \A)u_N = \abs{u_N}^2u_N\,,\\
    u_N(0) = P_{\leq N}f_0^\omega\,.
    \end{cases}
\end{equation*}
We write $u_N = u_{\frac N2}+v_N$, so that $v_N(0)$ is localized at frequency $\sim N$, and $v_N$ solves
\begin{equation*}
    \begin{cases}
    (i\partial_t + \A)v_N = \mathcal{N}\parent{v_N+u_{\frac N2}} - \mathcal{N}\parent{u_{\frac N2}}\,,\\
    v_N(0) = P_Nf_0^\omega\,.
    \end{cases}
\end{equation*}
Then, we consider at each step $n$ the adapted \textit{linear-nonlinear decomposition} $v_N = F_N + w_N$, defined as follows. 
\begin{itemize}
    \item The adapted linear evolution $F_N$ is solution to a linear Schrödinger half-wave equation that encapsulates the bad nonlinear interactions, with initial data $P_Nf_0^{\omega}$ localized at frequency $\sim N$:
\begin{equation*}
    \begin{cases}
    (i\partial_t+\A)F_N = \mathcal{N}\parent{F_N, P_{\leq N^\gamma}u_{\frac N2}, P_{\leq N^\gamma} u_{\frac N2}}\,,\\
    F_N(0) = P_Nf_0^\omega\,.
    \end{cases}
\end{equation*}
The parameter $0<\gamma<1$, to be determined later in the analysis, separates two ranges of frequency scales $N\gg N^{\gamma}$.
\item The nonlinear remainder~$w_N$ is solution to~\eqref{eq:NLSHW} with a stochastic forcing term and zero initial condition:
\begin{equation}
    \label{eq:wn}
    \begin{cases}
    (i\partial_t+\A)w_N = \mathcal{N}\parent{u_N} - \mathcal{N}\parent{u_{\frac N2}} - \mathcal{N}\parent{F_N,P_{\leq N^\gamma}u_{\frac N2},P_{\leq N^\gamma}u_{\frac N2}} \,,\\
    w_N(0) = 0\,.
    \end{cases}
\end{equation}
\end{itemize}
The key point is that the singular interactions between the \emph{high} frequencies of the initial data and the \emph{low} frequencies of the approximate solution $u_{N/2}$ are removed from the equation satisfied by $w_N$. Therefore, we can expect that a smoothing effect now occurs on $w_N$. Note that at each step we can decompose the solution as a series
\begin{equation}\label{eq:un_series}
u_N = u_{\frac N2}+ F_N+w_N = u_{N_0} + \sum_{L=2N_0+1}^N \left(w_L + F_L^\omega\right)\,.
\end{equation}
The goal is to prove that the series with general term $(w_N)_{N\geq N_0}$ almost-surely converges in a \textit{subcritical} space $\mathcal{C}(\intervalcc{-T_0}{T_0}; \mathcal{H}^\nu(\R^2))$, for some $\nu>\frac{1}{2}$ and some $0<T_0\ll 1$, and the series with general term $(F_N)_{N\geq N_0}$ converges almost-surely in $\mathcal{C}(\intervalcc{-T_0}{T_0}; \mathcal{H}^s(\R^2))$. In addition, we prove that there exists a time $T^\omega>0$ such that the limit of $(u_N)$ solves~\eqref{eq:NLSHW} in $\mathcal{C}\parent{\intervalcc{-T^\omega}{T^\omega};\mathcal{H}^s(\R^2)}$.

\subsection{Functional spaces}

In the nonlinear analysis, we need to keep track of the frequency localization of the adapted linear evolution $F_N$. Specifically, in order to avoid the \emph{high-low-low} interactions in the nonlinear analysis that are responsible for the lack of nonlinear smoothing, we want to ensure that for a given $N$, the $y$-frequencies of the functions $w_N$ and $F_N$ are localized at size $\abs{\eta}\sim N$. To capture such a frequency localization, rigorously stated in Lemmas~\ref{lem:loc-Fnk} and~\ref{lem:loc-Fn}, we fix a parameter $D>0$ and some weights 
\[
C_{N,D}(M) := \max\parentbig{\frac{N}{M},\frac{M}{N}}^D\,,\quad C_{\leq N,D}(M) := \max\parentbig{1,\frac{M}{N}}^D\,.
\]
Then we set the weighted norms
\begin{equation}
    \label{eq:cND}
\norm{u}_{X_{N,D}\intervalcc{-T}{T}} := \sum_{M\geq1}C_{N,D}(M)
	\norm{P_Mu}_{L_t^\infty(\intervalcc{-T}{T},L_{x,y}^2(\R^2))}
	\,,
\end{equation}
\[
\norm{u}_{X_{\leq N,D}\intervalcc{-T}{T}}
 := \sum_{M\geq1}C_{\leq N,D}(M)
 \norm{P_Mu}_{L_t^\infty(\intervalcc{-T}{T},L_{x,y}^2(\R^2))}
 \,.
\]
For some $r\in\intervalcc{2}{\infty}$, we define as well some frequency-localized mixed Strichartz norms 
\begin{equation*}
\begin{split}
\norm{u}_{S_{N,D}^r\intervalcc{-T}{T}} &:= \sum_{M\geq1}C_{N,D}(M)\norm{P_Mu}_{L_t^8(\intervalcc{-T}{T},L_x^4L_y^r(\R^2))}\,,\\
\norm{u}_{S_{\leq N,D}^r\intervalcc{-T}{T}} &:= \sum_{M\geq1}C_{\leq N,D}(M)\norm{P_Mu}_{L_t^8(\intervalcc{-T}{T},L_x^4L_y^r(\R^2))}\,.
\end{split}
\end{equation*}
When $r=\infty$ we simply write
\[
S_{N,D} := S_{N,D}^\infty\,,\quad S_{\leq N, D}:= S_{\leq N, D}^\infty\,.
\]
The frequency localization of $F_N^{\omega}$ around frequencies $N$ stems from the structure of $F_N$. Specifically, each $F_N$ is a superposition of functions $(F_{N,k})_{\frac{N}{2}\leq \abs{k}\leq N}$, decoupled by independent Gaussian variables. We observe that $F_{N,k}$ is solution to a linear Schrödinger equation with a potential truncated at frequencies of size $\lesssim N^\gamma$, with initial data $P_{1,k}f_0$ for some $\abs{k}\sim N$. Hence, at least in short-time, we expect $F_{N,k}$ to be localized at a distance $\lesssim N^\gamma$ from the frequency $k$. In particular,
\begin{equation}
\label{eq:loc-Fnk}
    \abs{\supp \mathcal{F}(F_{N,k})}\lesssim N^{\gamma}\quad \text{a.s.}
\end{equation}
This key frequency localization would reduce the derivative loss in the Strichartz estimates from Lemma~\ref{lem:str} for $F_{N,k}$, and therefore for $F_N$ by using probabilistic decoupling. To encapsulate~\eqref{eq:loc-Fnk}, rigorously stated in Lemma~\ref{lem:loc-Fnk}, we follow~\cite{bringmann2021-ansatz} and we consider the recentered Besov spaces
\begin{equation}
    \label{eq:norm:B}
    \norm{f}_{B_{k,D}^{\rho,\gamma}\intervalcc{-T}{T}} := \sum_{M\geq1}c_{k,D}^{\rho,\gamma}(M)\norm{P_{M,k}f}_{L_t^\infty(\intervalcc{-T}{T};L_{x,y}^2(\R^2))}\,,\quad c_{k,D}^{\rho,\gamma}(M) := M^\rho \max\parentbig{1,\frac{M}{N^\gamma}}^D\,.
\end{equation}
The weight $M^\rho$ accounts for an extra gain of derivative $\rho = \sigma-0$ on $F_{N,k}$ that comes from the frequency localization of $P_{M,k}F_{N,k}$. In Section~\ref{sec:conclusion}, we implement a fixed point argument on the nonlinear term $w$ in the space associated with the norm
\[
\norm{w}_{Y_N^\nu} 	= \norm{\japbrak{D_y}^\nu w}_{X_{N,\alpha}} + \norm{\japbrak{D_y}^\nu w}_{X_{\leq N,D}} 
	+ \norm{\japbrak{D_y}^\sigma w}_{S_{N,\alpha}} + \norm{\japbrak{D_y}^\sigma w}_{S_{\leq N,D}}\,,
\]
where $\alpha, D$ are parameters, $\nu>\frac{1}{2}$ is the regularity of the remainder $w_N$ determined by $0<\sigma=\nu-\frac{1}{2}-0$ from the Sobolev embedding $H^\nu_y\hookrightarrow W^{\sigma, \infty}_y$.  

\subsection{The recurrence and the truncation method from de Bouard and Debussche}
\label{sec:truncation}

In contrast with the standard probabilistic well-posedness argument implemented in~\cite{burq-tzvetkov-2008I}, which consists in a single fixed point argument for a fixed  $\omega\in\Omega$, we cannot solve the equation pathwise. The reason is that we need to perform a fixed point argument at each step of the iteration scheme, leading to a sequence of times $(T_N(\omega))_N$ that would depend intricately on $\omega$. In particular, we cannot ensure that this sequence is uniformly bounded from below in $n$.  Instead, we do have to work in a space $L^\rho(\Omega,\mathcal{E})$ for some $\rho\geq1$, where $\mathcal{E}$ is the space-time functional space in which we establish local existence. 

To handle a similar problem arising in the study of a Schrödinger equation with a multiplicative white noise~\cite{DeBouardDebussche99}, de Bouard and Debussche truncated the nonlinearity. This allows to perform a fixed-point argument in some space $L^\rho(\Omega,\mathcal{E})$ despite the absence of gain of integrability in the $\omega$-variable. We refer to~\cite{bringmann2021-ansatz} for some discussion on motivation for the truncation method of de Bouard and Debussche. The method consists in fixing a smooth function  $\theta\in\mathcal{C}_c^{\infty}(\R,[-1,1])$ compactly supported in $(-2,2)$ and such that $\theta\equiv 1$ on $[-1,1]$, and to essentially reduce the matter to a linear problem by assuming most of the terms to be at most $1$ in the nonlinearity. Let $F_{N,\theta}$ and $w_{M,\theta}$ to be determined later as the solutions to some equation. The cutoff functions are denoted
\begin{equation}
    \begin{split}
    \label{eq:theta_Fw}
\theta_{F;N}(\tau)&=\theta\left(\|\langle D_y\rangle^{\sigma'}F_{N,\theta}\|_{S_{\leq N,D'}[-\tau,\tau]}\right)\,,
\\
\theta_{w; N}(\tau)&=\theta\left(\|\langle D_y\rangle^{\sigma}w_{N,\theta}\|_{Y_N^{\nu}[-\tau,\tau]}\right)\,,\\
\theta_{F,w;\leq \frac N2}(\tau)&=\theta\parentbig{\sum_{M}^{N/2}\|\langle D_y\rangle^{\sigma'}F_{M,\theta}^{\omega}\|_{S_{\leq M,D'}[-\tau,\tau]}
	+\|\langle D_y\rangle^{\sigma}w_{M,\theta}\|_{S_{\leq M,D}[-\tau,\tau]}
	\\
	&\hspace{200pt}+\|\langle D_y\rangle^{\nu}w_{M,\theta}\|_{X_{\leq M,D}[-\tau,\tau]}}\,.
\end{split}
\end{equation}
We define by induction  $F_{N,\theta}$ and $w_{N,\theta}$ as solutions to the following truncated  equations.
First,
\begin{multline}
    \label{eq:trun-Fn}
    F_{N,\theta} (t) = \e^{it\A}P_Nf_0^\omega 
    + i\int_0^t\e^{i(t-\tau)\A}\theta_{F,w;\leq \frac N2}(\tau)^2 \mathcal{N}\parent{F_{N,\theta},P_{\leq N^\gamma}u_{\frac N2,\theta},P_{\leq N^\gamma}u_{\frac N2,\theta}}(\tau)\dd\tau\,,  
\end{multline}
and similarly, the block$F_{N,k,\theta}$ is solution to the decomposed truncated equation
\begin{multline}
    \label{eq:trun-Fnk}
    F_{N,k,\theta} (t) = \e^{it\A}P_{1,k}f_0 
    + i\int_0^t\e^{i(t-\tau)\A}\theta_{F,w;\leq \frac N2}(\tau)^2 \mathcal{N}\parent{F_{N,k,\theta},P_{\leq N^\gamma}u_{\frac N2,\theta},P_{\leq N^\gamma}u_{\frac N2,\theta}}(\tau)\dd\tau\,, 
\end{multline}
so that 
\begin{equation}\label{eq:F_n,theta_decompo}
F_{N,\theta} = \sum_{\frac{N}{2}\leq \abs{k}<N} g_k(\omega)F_{N,k,\theta}\,.
\end{equation}
Then, the truncated nonlinear remainder $w_{N,\theta}$  solves
\begin{equation}\label{eq:wtheta}
w_{N,\theta}(t)=i\int_{0}^t e^{i(t-\tau)\mathcal{A}}\widetilde{\mathcal{N}}_{\theta}(w_{N,\theta})(\tau)\dd \tau\,,
\end{equation}
where the nonlinearity $\widetilde{\mathcal{N}_{\theta}}(w_{N,\theta})$ is given by
\begin{equation}\label{eq:Ntilde}
    \begin{split}
    \widetilde{\mathcal{N}}_{\theta}(w_{N,\theta})
	&=\theta_{F;N}^2\mathcal{N}(F_{N,\theta})
	+\theta_{F;N}^2\mathcal{N}(F_{N,\theta},F_{N,\theta},w_{N,\theta})
	+\theta_{F;N}\theta_{w;N}\mathcal{N}(F_{N,\theta},w_{N,\theta},w_{N,\theta})\\
	&+\theta_{w;N}^2\mathcal{N}(w_{N,\theta})
	+\theta_{F,w;\leq \frac N2}^2 \mathcal{N}(u_{\frac N2,\theta},u_{\frac N2,\theta},w_{N,\theta})
	+\theta_{F,w;\leq \frac N2}\theta_{w;N} \mathcal{N}(u_{\frac N2,\theta},w_{N,\theta},w_{N,\theta})\\
	&+2\theta_{F,w;\leq \frac N2}\theta_{F;N} \mathcal{N}(u_{\frac N2,\theta},w_{N,\theta},F_{N,\theta})
	+\theta_{F,w;\leq \frac N2}\theta_{F;N}\mathcal{N}(F_{N,\theta},F_{N,\theta},u_{\frac N2,\theta})\\
	&+\theta_{F,w;\leq \frac N2}^2\left(\mathcal{N}(F_{N,\theta},u_{\frac N2,\theta},u_{\frac N2,\theta})-\mathcal{N}(F_{N,\theta},P_{\leq N^{\gamma}}u_{\frac N2,\theta},P_{\leq N^{\gamma}}u_{\frac N2,\theta})\right)\,.
    \end{split}
\end{equation}
Finally, we denote the truncated approximate solution
\[
u_{N,\theta}=u_{\frac N2,\theta}+F_{N,\theta}+w_{N,\theta}\,.
\]
We prove the convergence of $(u_{N,\theta})_N$ on a time-interval $\intervalcc{-T_0}{T_0}$. Subsequently, we use a priori estimates and a continuity argument to remove the truncation, and to get a solution to the original equation~\eqref{eq:wn} on a sub-interval $\intervalcc{-T^\omega}{T^\omega}$ with $0<T^\omega\leq T_0$ almost-surely. This is the matter of Section~\ref{sec:conclusion}, once we solved the fixed point for the truncated equations and proved the convergence of the approximate solutions $(u_{N,\theta})_N$ in $L^2(\Omega; \mathcal{E})$.

\section{The adapted linear evolution}\label{sec:linear_evolution}

From now on, and until Section~\ref{sec:conclusion}, we often drop the index $\theta$ for simplicity in the notation. At each step $n$, the adapted linear evolution is decomposed as~\eqref{eq:F_n,theta_decompo}
\[
F_N = \sum_{\frac{N}{2}\leq\abs{k}<N} g_{k}(\omega) F_{N,k}\,,
\]
where each unit block $F_{N,k}$ is the solution to the linear equation
\begin{equation*}
    \begin{cases}
    (i\partial_t+\A)F_{N,k} = \theta_{F,w;\leq \frac N2}^2\mathcal{N}(F_{N,k},P_{\leq N^\gamma}u_{\frac N2},P_{\leq N^\gamma}u_{\frac N2})\,,\\
    F_{N,k}(0) = P_{1,k}f_0\,.
    \end{cases}
\end{equation*}
We denote the \emph{potential} by $\phi:=\theta_{F,w;\leq \frac N2}P_{\leq N^\gamma}u_{\frac N2}$, and we use the following features of $\phi$ in this section:
\begin{itemize}
    \item $\phi$ is localized at low frequencies $\lesssim N^\gamma$;
    \item $\phi$ satisfies the following a priori estimate, encoded in the truncation defined in~\eqref{eq:theta_Fw}
    \[
    \sum_M\norm{\japbrak{D_y}^\sigma\phi}_{S_{M,D}\intervalcc{-\tau}{\tau}} \leq 1\,,\quad \tau \in\R\,;
    \]
    \item $\phi$ is measurable in the $\sigma$-algebra $\mathcal{B}_{<\frac{N}{2}}$ generated by the random variables $(g_k)_{\abs{k}<\frac{N}{2}}$. 
     Therefore, $\phi$ is independent of the Gaussian variables $(g_k)_{\frac{N}{2}\leq\abs{k}<N}$.
\end{itemize}
\subsection{Almost frequency localization}

 In what follows, we fix $n\geq1$, $N=2^n$, $k\in\Z$ with $\frac{N}{2}\leq\abs{k}<N$, and the potential $\phi =\theta_{F,w;\leq \frac N2}P_{\leq N^\gamma}u_{\frac N2}$ is as above. First, we prove the key frequency localization of $F_{N,k}$ at distance $\lesssim N^\gamma$ from $k$, with a gain of $\rho=\sigma-0$ derivatives captured by the recentered Besov spaces $B_{k,D}^{\rho,\gamma}$ defined in~\eqref{eq:norm:B}. This stems from the localization of the initial condition $P_{1,k}f_0$ in a unit-scale interval around $k$, together with the localization of the potential $\phi$.

\begin{lem}[Frequency localization of the individual blocks]
\label{lem:loc-Fnk}
Let $D''\gg1$. Then, for $0\leq \rho<\sigma$ arbitrarily close to $\sigma$, there exist $0<T_0$ and $0<C$ such that for every $N$ and $\frac N2\leq |k| < N$ the solution $F_{N,k}$ to~\eqref{eq:trun-Fnk} satisfies
\begin{equation}
\label{eq:loc-Fk}
    \norm{F_{N,k}}_{ B_{k,D''}^{\rho,\gamma}\intervalcc{-T_0}{T_0}}
    \leq C\norm{P_{1,k}f_0}_{L^2_{x,y}}\,.
\end{equation}
As a consequence,  
\begin{equation}
\label{eq:loc-Fk-sum}
    \norm{\mathbf{1}_{\frac N2\leq |k|<N} F_{N,k}}_{\ell_k^2(\Z ; B_{k,D''}^{\rho,\gamma}\intervalcc{-T_0}{T_0})}
    \leq C\norm{\widetilde{P_N}f_0}_{L^2_{x,y}}\,.
\end{equation}
\end{lem}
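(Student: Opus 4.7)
The bound \eqref{eq:loc-Fk} will be established by a contraction mapping argument applied to the Duhamel formulation \eqref{eq:trun-Fnk} in the Banach space defined by the Besov-type norm $\|\cdot\|_{B_{k,D''}^{\rho,\gamma}[-T_0,T_0]}$, possibly augmented by a recentered Strichartz companion norm so that the Hölder-in-time step closes on a short interval. The free linear contribution is handled directly: since $P_{1,k}f_0$ has $y$-Fourier support in $[k-1,k+1]$, only the scale $M\sim 1$ of $e^{it\mathcal{A}}P_{1,k}f_0$ survives the projectors $P_{M,k}$, and $c_{k,D''}^{\rho,\gamma}(M) = O(1)$ on that range, so unitarity of $e^{it\mathcal{A}}$ on $L^2_{xy}$ bounds this term by $\|P_{1,k}f_0\|_{L^2}$.

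The core of the argument is a convolution-support observation for the Duhamel integral. Since the potential $\phi := \theta_{F,w;\leq N/2}P_{\leq N^\gamma}u_{N/2}$ has $y$-Fourier support in $\{|\eta|\lesssim N^\gamma\}$ and $P_{M_1,k}F_{N,k}$ has support near $\pm k$ at scale $\sim M_1$, the trilinear expression $P_{M,k}\,\mathcal{N}(P_{M_1,k}F_{N,k},\phi,\phi)$ vanishes unless $M\lesssim M_1+N^\gamma$ (and conversely $M_1\gtrsim M-O(N^\gamma)$). Consequently the Duhamel map is \emph{essentially diagonal} in the recentered dyadic scale, with off-diagonal contributions decaying geometrically; this is precisely what permits the weight $\max(1,M/N^\gamma)^{D''}$ to be preserved once $D''$ is chosen large enough, while the weight $M^\rho$ passes through since the surviving input and output scales are comparable.

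To make this quantitative I would apply the $TT^*$ estimate of Lemma~\ref{lem:str} (with $r=2$, so no Bernstein loss) together with a Hölder-in-space-time decomposition placing $F_{N,k}$ in a Strichartz norm and each copy of $\phi$ in the $L^8_t L^4_x L^\infty_y$-type space controlled by the truncation hypothesis $\sum_M \|\langle D_y\rangle^\sigma \phi\|_{S_{M,D}[-\tau,\tau]}\leq 1$. The strict inequality $\rho<\sigma$ is used here to retain a positive margin in the Sobolev/Bernstein comparisons that allow one to trade the $\sigma$-regularity of $\phi$ for the $L^\infty_y$ bound needed in Hölder. Taking $T_0$ small then produces a Hölder time factor $T_0^c$ that turns the resulting trilinear estimate into a contraction on a ball of radius comparable to $\|P_{1,k}f_0\|_{L^2}$, yielding \eqref{eq:loc-Fk}.

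Finally, \eqref{eq:loc-Fk-sum} is immediate: squaring \eqref{eq:loc-Fk} and summing over $N/2\leq|k|<N$ gives $C^2 \sum_{N/2\leq|k|<N}\|P_{1,k}f_0\|_{L^2}^2$, which Plancherel and the near-orthogonality of the unit-scale projectors $(P_{1,k})_k$ bound by $C^2\|\widetilde{P_N}f_0\|_{L^2}^2$. The main technical obstacle will be the simultaneous bookkeeping of both weights---the recentered off-diagonal weight $\max(1,M/N^\gamma)^{D''}$ and the derivative weight $M^\rho$---through the trilinear Strichartz estimate, including the conjugated branch $\overline{F_{N,k}}$ which reflects the support across zero to sit near $-k$ rather than $+k$, and the verification that all constants in the contraction are uniform in both $N$ and $k$.
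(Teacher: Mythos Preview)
Your proposal is correct and follows the same strategy as the paper: a bootstrap (equivalently, a contraction for the linear-in-$F_{N,k}$ Duhamel map) in the recentered Besov norm $B_{k,D''}^{\rho,\gamma}$, using the $TT^*$ estimate of Lemma~\ref{lem:str} with $r=2$ to avoid any Bernstein loss, a paraproduct case analysis in the recentered dyadic scale to show near-diagonality, and the strict inequality $\rho<\sigma$ to make the off-diagonal sums converge; the consequence \eqref{eq:loc-Fk-sum} is then exactly the $\ell^2_k$ summation you describe.

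One simplification the paper makes relative to your plan: it does \emph{not} introduce a companion Strichartz norm for $F_{N,k}$. Instead it places $F_{N,k}$ directly in $L_t^\infty L^2_{x,y}$---which is the native building block of the $B_{k,D''}^{\rho,\gamma}$ norm---and both copies of $\phi$ in $L_t^8 L_x^4 L_y^\infty$. The H\"older arithmetic $\tfrac{3}{4}=0+\tfrac18+\tfrac18+\tfrac12$ (time), $1=\tfrac12+\tfrac14+\tfrac14$ ($x$), $\tfrac12=\tfrac12+0+0$ ($y$) then closes in $L_t^{4/3}L_x^1L_y^2$ with the $T^{1/2}$ gain, and the bootstrap reads
\[
\|F_{N,k}\|_{B_{k,D''}^{\rho,\gamma}[-T,T]}\;\le\;\|P_{1,k}f_0\|_{L^2_{x,y}}+CT^{1/2}\|F_{N,k}\|_{B_{k,D''}^{\rho,\gamma}[-T,T]}\,\|\langle D_y\rangle^\sigma\phi\|_{L_t^8L_x^4L_y^\infty}^2\,,
\]
with the truncation forcing the $\phi$-factor to be $\le 1$. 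This avoids the extra bookkeeping you anticipated from an auxiliary Strichartz norm. Otherwise your outline matches the paper's proof, including your observation that the conjugated branch $\overline{F_{N,k}}$ reflects support to $-k$; the paper's case analysis is written for the non-conjugated contributions and does not comment on this point explicitly.
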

The proof follows the same lines as the proof of Proposition 4.1 in~\cite{bringmann2021-ansatz}. Nevertheless, we use the Duhamel formula and the Strichartz estimate in place of a Gronwall argument in order to avoid using the quantity $L_{x,y}^\infty(\R^2)$ that we do not control.
\begin{proof}
The estimate~\eqref{eq:loc-Fk-sum} follows from~\eqref{eq:loc-Fk} by using that 
\[
\norm{\mathbf{1}_{\frac N2\leq |k|<N}P_{1,k}f_0}_{\ell_k^2(\Z;L_{x,y}^2(\R^2))}\lesssim \norm{\widetilde{P}_Nf_0}_{L_{x,y}^2}\,.
\]
We prove~\eqref{eq:loc-Fk}: given $0<T_0\leq 1$, we show that for all $D''>0$, there exists $C>0$ such that for all $k\in\Z$ and $0\leq T\leq T_0$,
\begin{equation}        
\label{eq:loc-Fkbis}
\norm{ F_{N,k}}_{B_{k,D''}^{\rho,\gamma}\intervalcc{-T}{T}}
    	\leq \norm{ P_{1,k}f_0}_{L^2_{x,y}}+ C T^\frac{1}{2}\norm{ F_{N,k}}_{B_{k,D''}^{\rho,\gamma}\intervalcc{-T}{T}}\norm{ \japbrak{D_y}^\sigma\phi}_{L_t^8L_x^4L_y^\infty}^2\,.
\end{equation}
We then use the truncation $\theta$ to see that the norms of the potential is less than one, and we chose $T_0>0$ (independent of $N$) such that $CT_0^{\frac{1}{2}}\leq \frac{1}{2}$.
Let us establish~\eqref{eq:loc-Fkbis} for fixed $k\in\Z$. First, observe that 
\[
\norm{P_{1,k}f_0}_{B_{k,D''}^{\rho,\gamma}[-T,T]}\lesssim\|P_{1,k}f_0\|_{ L^2_{x,y}}\,.
\]
Next, it follows from the definition of the Besov norm~\eqref{eq:norm:B} and from the Duhamel formula that we need to estimate 
\[
\sum_M c_{k,D''}^{\rho,\gamma}(M)\norm{\int_{0}^{t} e^{i(t-\tau)\mathcal{A}}  P_{M,k}\parent{\mathcal{N}(F_{N,k},\phi,\phi)}(\tau)\dd\tau }_{L_{x,y}^2}\,.
\]
We choose the Strichartz admissible pairs $(p,q)=(\infty,2)$, $(\widetilde{p},\widetilde{q})=(4,\infty)$ and $r=2$, then we apply the $TT^*$-estimate from Lemma~\ref{lem:str} (recall that when $r=2$ there is no need to apply the Bernstein estimate, and there is no derivative loss). For fixed $M$, we have
\[
\norm{\int_{0}^{t} e^{i(t-\tau)\mathcal{A}}  P_{M,k}\parent{\mathcal{N}(F_{N,k},\phi,\phi)}(\tau)\dd\tau }_{L_t^\infty L_{x,y}^2}\lesssim\norm{P_{M,k}\parent{\mathcal{N}(F_{N,k},\phi,\phi)} }_{L_t^{\frac{4}{3}}L_x^1L_y^2}\,.
\]
We do a Littlewood-Paley decomposition of $F_{N,k}(t)$ with frequencies $K$ centered around $k$, and a standard Littlewood-Paley decomposition of $\phi$ with frequencies $L_1,L_2$. Without loss of generality, we assume that $L_2\leq L_1$. There holds
\begin{equation*}
   \norm{P_{M,k}\parent{\mathcal{N}(F_{N,k},\phi,\phi)}}_{L_t^\frac{4}{3}L_x^1L^{2}_y}
    \lesssim  \sum_K\sum_{L_2\leq L_1\lesssim N^\gamma} \norm{  P_{M,k}\parent{\mathcal{N}(P_{K,k}F_{N,k},P_{L_1}\phi,P_{L_2}\phi)}}_{L_t^\frac{4}{3}L_x^1L^{2}_y} \,.
\end{equation*}
Then, the interactions that contribute to the above sum are the following:
\begin{itemize}
    \item $K\sim M$,
    \item $K\ll M$ and $L_1\sim M$,
    \item $K\gg M$ and $L_1\sim K$,

\end{itemize}
Hence, 
\begin{multline*}
\norm{  P_{M,k} \parent{\mathcal{N}(F_{N,k},\phi,\phi)}}_{L_t^\frac{4}{3}L_x^1L_y^2}
    \lesssim T^\frac{1}{2}\Big(\sup_LL^\sigma\norm{P_L\phi}_{L_t^8L_x^4L_y^{\infty}}\Big)^2\\
    \Big(\sum_{K\sim M}\norm{  P_{K,k}F_{N,k}}_{L_t^\infty L_{x,y}^2}
    +\sum_{K:\, K\ll M\sim L_1}L_1^{-\sigma}\norm{ P_{K,k}F_{N,k}}_{L_t^\infty L^2_{x,y}}
    +\sum_{K:\, K\sim L_1\gg M}L_1^{-\sigma}\norm{  P_{K,k}F_{N,k}}_{L_t^\infty L^2_{x,y}}\Big)\,.
\end{multline*}
It follows from the definition~\eqref{eq:norm:B} of weights $c_{k,D''}^{\rho,\gamma}(K)$ and from the truncation~\eqref{eq:trun-Fnk} that
\[
\sup_LL^\sigma\norm{P_L\phi}_{L_t^8L_x^4L_y^{\infty}}\lesssim 1\,.
\]
Therefore, 
\begin{multline*}
\norm{ P_{M,k}\parent{\mathcal{N}(F_{N,k},\phi,\phi)}}_{L_t^\frac{4}{3}L_x^1L^2_y}
   \lesssim T^\frac{1}{2}\sum_{K\sim M}\norm{  P_{K,k}F_{N,k}(t)}_{L_{x,y}^2}\\
    +T^\frac{1}{2}\parentbig{\sum_{K\ll M\sim L_1} K^{-\rho}L_1^{-{\sigma}} 
    + \sum_{L_1\sim K\gg M} K^{-\rho}\max\left(1,\frac{K}{N^\gamma}\right)^{-D''}L_1^{-{\sigma}}}
    \norm{  F_{N,k}(t)}_{B_{k,D''}^{\rho,\gamma}\intervalcc{-T}{T}}
    \,.
\end{multline*}
Next, we multiply by $c_{k,D''}^{\rho,\gamma}(M)=M^{\rho}\max\left(1,\frac{M}{N^{\gamma}}\right)^D$ and we sum over $M$ to obtain
\begin{multline*}
\norm{ F_{N,k}(t)}_{B_{k,D''}^{\rho,\gamma}\intervalcc{-T}{T}}
 \lesssim \norm{P_{1,k}f_0}_{L^2_{x,y}} + T^\frac{1}{2}\sum_{M\sim N} c_{k,D''}^{\rho,\gamma}(M)\norm{P_{M,k}F_{N,k}}_{L_t^\infty L_{x,y}^2} \\
 +T^\frac{1}{2}\left(\sum_{M\lesssim N^\gamma} M^{\rho-\sigma}\right)\norm{F_{N,k}}_{B_{k,D''}^{\rho,\gamma}\intervalcc{-T}{T}}\,.
\end{multline*}
The assumption that $\sigma>\rho$ allows us to 
conclude.
\end{proof}
Now, we prove that $F_N$ is essentially localized at frequency $N$. This turns out to be crucial in the estimates for the remainder $w_N$, in the situations when we want to avoid the \emph{high-low-low} frequency interactions. We recall that such a frequency localization is captured by the weighted norm~\eqref{eq:cND}. 

\begin{lem}[Frequency localization for the adapted evolution]
\label{lem:loc-Fn}
Let $D''>0$. There exists $T_0>0$ such that $F_N$ satisfies
\[
\norm{ \japbrak{D_y}^{s} F_{N}^\omega}_{X_{N,D'}\intervalcc{-T_0}{T_0}}
	 \lesssim  \norm{  \widetilde{P_N}f_0^\omega}_{\mathcal{H}^s(\R^2)}\,.
\]
\end{lem}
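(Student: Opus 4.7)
The plan is to lift the pathwise Besov estimate of Lemma~\ref{lem:loc-Fnk} to the full sum $F_N = \sum_{N/2 \leq |k| < N} g_k(\omega) F_{N,k}$ (cf.~\eqref{eq:F_n,theta_decompo}) via a frequency-support analysis combined with Gaussian decoupling of the high modes. For each dyadic $M$, I first analyze a single block by writing $P_M F_{N,k} = \sum_L P_M P_{L,k} F_{N,k}$. The key geometric observation is that $P_{L,k}$ is Fourier-supported in an interval of length $\sim L$ centered at $k$, so since $|k| \in [N/2, N)$, the product $P_M P_{L,k}$ is identically zero unless $L \gtrsim \max(M,N)$ when $M \not\sim N$, whereas any $L$ contributes when $M \sim N$. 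Combining this support restriction with Lemma~\ref{lem:loc-Fnk} (which gives $\|P_{L,k} F_{N,k}\|_{L_t^\infty L_{xy}^2} \lesssim L^{-\rho}\max(1, L/N^\gamma)^{-D''}\|P_{1,k} f_0\|_{L^2}$ pathwise in $\omega$) yields
\[
\|P_M F_{N,k}\|_{L_t^\infty L_{xy}^2} \lesssim \Lambda(M,N)\,\|P_{1,k} f_0\|_{L^2},
\]
with $\Lambda(M,N) \lesssim 1$ for $M \sim N$, $\Lambda(M,N) \lesssim M^{-\rho-D''} N^{\gamma D''}$ for $M \gg N$, and $\Lambda(M,N) \lesssim N^{-\rho - (1-\gamma)D''}$ for $M \ll N$.

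Next, I would multiply by the target weight $M^s C_{N,D'}(M)$ (recalling that $\langle D_y\rangle^s$ contributes a factor $\sim M^s$ on $P_M$) and dyadically sum in $M$. Choosing $D''$ sufficiently large compared to $D'$, $s$, $\rho$, and $(1-\gamma)^{-1}$, all three off-diagonal regimes produce convergent geometric series dominated by the diagonal $M \sim N$, giving the per-block bound
\[
\sum_M M^s\, C_{N,D'}(M)\,\|P_M F_{N,k}\|_{L_t^\infty L_{xy}^2} \lesssim N^s \|P_{1,k} f_0\|_{L^2}.
\]
To sum over $k$, I would exploit that the Gaussian variables $(g_k)_{N/2 \leq |k| < N}$ are independent of the potential $\phi$, which is measurable with respect to the low-mode $\sigma$-algebra $\mathcal{B}_{<N/2}$, and therefore independent of each $F_{N,k}$. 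Conditioning on the low modes, Khintchine's inequality together with the $\ell^2$ commutation $\sup_t(\sum_k a_k(t)^2)^{1/2} \leq (\sum_k \|a_k\|_{L_t^\infty}^2)^{1/2}$ gives, in $L_\omega^p$ for any $p \geq 2$,
\[
\bigl\|\|P_M F_N\|_{L_t^\infty L_{xy}^2}\bigr\|_{L_\omega^p} \lesssim \sqrt{p}\,\Bigl(\sum_{|k|\sim N} \|P_M F_{N,k}\|_{L_t^\infty L_{xy}^2}^2\Bigr)^{1/2}.
\]
Finally, $\sum_{|k|\sim N} \|P_{1,k} f_0\|_{L^2}^2 \lesssim \|\widetilde{P_N} f_0\|_{L^2}^2$, and the Khintchine equivalence $N^s \|\widetilde{P_N} f_0\|_{L^2} \sim \|\widetilde{P_N} f_0^\omega\|_{L_\omega^p \mathcal{H}^s}$ closes the estimate.

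The main obstacle is the weight bookkeeping: the off-diagonal penalty $C_{N,D'}(M)$ in the target norm and the Besov decay weight $c_{k,D''}^{\rho,\gamma}$ from Lemma~\ref{lem:loc-Fnk} must be balanced, which is possible precisely because the geometric constraint $L \gtrsim \max(M, N)$ inherited from $|k| \sim N$ translates Besov localization around $k$ into $X_{N,D'}$ localization around $N$. Choosing $D''$ large enough in terms of $D'$, $s$, $\rho$, and $\gamma$ ensures summability in all three regimes $M \sim N$, $M \gg N$, and $M \ll N$ — this is the sole use of the freedom to take $D''$ arbitrarily large afforded by Lemma~\ref{lem:loc-Fnk}.
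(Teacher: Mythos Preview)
Your approach proves only the $L_\omega^p$ averaged estimate $\big\|\,\|\japbrak{D_y}^s F_N\|_{X_{N,D'}}\big\|_{L_\omega^p}\lesssim \sqrt{p}\,N^s\|\widetilde{P_N}f_0\|_{L^2}$, whereas the lemma as stated is \emph{pathwise}: both sides carry the same $\omega$, and the inequality must hold for each fixed $\omega$ with an $\omega$-independent constant. The discrepancy is not cosmetic, because your route to the right-hand side passes through Khintchine and then through the equivalence $N^s\|\widetilde{P_N}f_0\|_{L^2}\sim \|\widetilde{P_N}f_0^\omega\|_{L_\omega^p\mathcal{H}^s}$, which is an identity of moments, not of random variables.

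The place where this matters is the diagonal regime $M\sim N$. There your per-block bound $\|P_M F_{N,k}\|_{L_t^\infty L_{x,y}^2}\lesssim \|P_{1,k}f_0\|_{L^2}$ is correct, but to recover a pathwise estimate on $P_M F_N=\sum_k g_k P_M F_{N,k}$ without Khintchine you are forced into the triangle inequality and Cauchy--Schwarz over the $\sim N$ indices $k$, which costs a factor $N^{1/2}$. Since $C_{N,D'}(M)\sim 1$ when $M\sim N$, there is no off-diagonal decay to absorb this loss, and you end up with $N^{s+1/2}\|\widetilde{P_N}f_0^\omega\|_{L^2}$ rather than $N^s\|\widetilde{P_N}f_0^\omega\|_{L^2}$. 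The paper avoids this entirely by treating $M\sim N$ directly on $F_N$ (not on the blocks): a single Duhamel/$TT^*$ step with the truncated potential gives $\|F_N\|_{L_t^\infty L_{x,y}^2}\lesssim \|P_N f_0^\omega\|_{L_{x,y}^2}+T^{1/2}\|F_N\|_{L_t^\infty L_{x,y}^2}$, which closes pathwise for small $T_0$. Your off-diagonal analysis ($M\not\sim N$) is essentially the same as the paper's, and there the $N^{1/2}$ loss from the crude triangle inequality is harmless because the Besov tails $(\max(M,N)/N^\gamma)^{-D''}$ furnish arbitrary polynomial decay once $D''$ is taken large. So the fix is localized: replace your per-block + Khintchine treatment of the diagonal by a direct energy/Duhamel estimate on $F_N$.
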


\begin{proof}
Given $M\gg1$ and $0\leq T\leq 1$, we need to estimate 
\[
M^sC_{N,D'}(M)\norm{  P_MF_N}_{L^{\infty}_t([-T,T];L_{x,y}^2)}\,.
\]
We recall that $F_N$ is solution to the truncated equation~\eqref{eq:trun-Fn}, and that $\phi(t) =\theta_{F,w;\leq \frac N2} P_{\leq N^\gamma} u_{\frac N2 }(t)$. 
-\emph{Case when $M\sim N$}. In contrast with~\cite{bringmann2021-ansatz}, we do not use an energy estimate. Instead, we proceed as in the proof of Lemma~\ref{lem:loc-Fnk} in order to avoid using $L_{x,y}^\infty$. We start from the Duhamel formula, and we estimate
\begin{equation*}
    F_N(t) =  P_{N}\e^{it\A}f_0^{\omega} + \int_0^t \e^{i(t-\tau)\A} \mathcal{N}(F_N,\phi,\phi)(\tau)\dd\tau\,.
\end{equation*}
From the Bernstein-Strichartz $TT^*$-estimate in Lemma~\ref{lem:loc-Fnk}, with $(p,q)=(\infty,2)$, $(\widetilde{p},\widetilde{q})=(4,\infty)$ and $r=2$, we obtain
\begin{equation*}
 \|  F_N\|_{L_t^\infty L^2_{x,y}}
 	\lesssim \| P_N f_0^{\omega}\|_{L^2_{x,y}} +\|\mathcal{N}(F_N,\phi,\phi)\|_{L^\frac{4}{3}_tL^1_xL^2_y}\,.
\end{equation*}
Thanks to Hölder's inequality and to the truncation, we know that on $[-T,T]$,
\[
\|\mathcal{N}(F_N,\phi,\phi) \|_{L^\frac{4}{3}_tL^1_xL^2_y}
	\lesssim T^\frac{1}{2}\|F_N\|_{L^{\infty}_t L^2_{x,y}}\|\phi\|_{L^{8}_tL^4_xL^\infty_y}^2 \lesssim T^\frac{1}{2}\|F_N\|_{L^{\infty}_t L^2_{x,y}} \,.
\]
Choosing $T_0$ small enough we conclude that if $T\leq T_0$,
\[
\norm{F_N}_{L_t^\infty L_{x,y}^2} \lesssim \norm{P_Nf_0^\omega}_{L_{x,y}^2}\,.
\]
Therefore, when $M\sim N$, we deduce that
\[
M^{s}\norm{ P_MF_N}_{L_t^\infty L_{x,y}^2}\lesssim N^{s}\norm{ F_N}_{L_t^\infty L_{x,y}^2}\lesssim \norm{\japbrak{D_y}^{s} \widetilde{P_N}f_0^\omega}_{L_{x,y}^2}\,.
\]

-\emph{Case when $M\nsim N$}. In this case, we first fix $k$ with $\frac{N}{2}\leq \abs{k}< N$ and we perform a dyadic decomposition in frequency around $k$ for $F_{N,k}$. We see that when $M\ll N$,
\[
\norm{P_M F_{N,k}}_{L_t^\infty L_{x,y}^2} \lesssim \sum_{K\sim N} \norm{P_{K;k}F_{N,k}}_{L_t^\infty L_{x,y}^2} \lesssim \parentbig{\frac{N}{N^\gamma}}^{-D''} \norm{F_{N,k}}_{B_{k,D''}^{\rho,\gamma}}\,,
\]
whereas when $N\ll M$, we have 
\[
\norm{P_M F_{N,k}}_{L_t^\infty L_{x,y}^2} \lesssim \sum_{K\sim M} \norm{P_{K;k}F_{N,k}}_{L_t^\infty L_{x,y}^2} \lesssim \parentbig{\frac{M}{N^\gamma}}^{-D''} \norm{F_{N,k}}_{B_{k,D''}^{\rho,\gamma}}\,.
\]
Hence, 
\[
\norm{P_M F_{N,k}}_{L_t^\infty L_{x,y}^2} \lesssim \parentbig{\frac{\max(M,N)}{N^\gamma}}^{-D''}\norm{F_{N,k}}_{B_{k,D''}^{\rho,\gamma}}\,.
\]
We deduce that if $D''$ is large enough with respect to $D'$, then
\[
C_{N,D'}(M)\| P_MF_N(t)\|_{L_t^\infty L^2_{x,y}}
	\lesssim (MN)^{-D'} \sum_{\frac N2\leq k<N}|g_k(\omega)|\| F_{N,k}\|_{B^{\rho,\gamma}_{k,D''}}\,.
\]
Using Lemma~\ref{lem:loc-Fnk} we deduce that
\[
C_{N,D'}(M)\| P_MF_N(t)\|_{L_t^\infty L^2_{x,y}}
	\lesssim (MN)^{-D'} \sum_{\frac N2\leq k<N}|g_k(\omega)|\| P_{1,k}f_0\|_{L^2_{x,y}}\,.
\]
From the Cauchy-Schwarz inequality and using that $D'$ can be taken large enough to control the extra $N^\frac{1}{2}$-loss coming to the sum over $k$ and the factor $M^s$, we conclude
\[
M^s C_{N,D'}(M)\| P_MF_N(t)\|_{L^2_{x,y}}
	\lesssim (MN)^{-\frac{D'}{2}}\| \widetilde{P_N}f_0^{\omega}\|_{L^2_{x,y}} \,.\qedhere
\]
\end{proof}

\subsection{Strichartz estimates for the adapted linear evolution}

In this part, we undercut the derivative loss in the Strichartz estimates~\eqref{eq:str} for the adapted linear evolution $F_N$. To do so, we take advantage of the frequency localization of the function $F_{N,k}$ together with probabilistic decoupling.
\begin{lem}[Strichartz estimate for the individual block]\label{lem:strichartz_block}
Let  $0<\rho<\sigma$ as in Lemma~\ref{lem:loc-Fnk}, let $\delta=\gamma(\sigma-\rho)>0$ (arbitrarily small as $\rho$ is close to $\sigma$), and let $T_0>0$ small enough. For all $D'>0$, there exists $D''> D'$ such that for all $\sigma'>\sigma$ and $\frac{N}{2}\leq \abs{k}<N$, $0<T\leq T_0$, we have
\begin{equation*}
    \norm{\japbrak{D_y}^{\sigma'}F_{N,k}}_{S_{N,D'}^r\intervalcc{-T}{T}}
    \lesssim_r \norm{\japbrak{D_y}^{\sigma'}P_{1,k}f_0}_{L_{x,y}^2}
    + T^{\frac{1}{2}}N^{\sigma'+\gamma(\frac{1}{2}-\frac{1}{r})-\gamma\sigma+\delta}\norm{F_{N,k}}_{ B_{k,D''}^{\rho,\gamma}\intervalcc{-T}{T}}\,.
\end{equation*}
\end{lem}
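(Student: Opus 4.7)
The plan is to start from the Duhamel formula for~\eqref{eq:trun-Fnk},
\begin{equation*}
F_{N,k}(t) = \e^{it\A}P_{1,k}f_0 + i\int_0^t \e^{i(t-\tau)\A}\theta_{F,w;\leq N/2}^2\,\mathcal{N}(F_{N,k},\phi,\phi)(\tau)\dd\tau,
\end{equation*}
with $\phi = \theta_{F,w;\leq N/2}P_{\leq N^\gamma}u_{N/2}$, and to bound the $S_{N,D'}^r\intervalcc{-T}{T}$-norm of each term separately via the Bernstein-Strichartz machinery of Lemma~\ref{lem:str}. For the linear contribution the datum $P_{1,k}f_0$ is supported in a unit-length Fourier interval around $k$, so $P_M\japbrak{D_y}^{\sigma'}\e^{it\A}P_{1,k}f_0$ is non-trivial only when $M\sim N$, where $C_{N,D'}(M)\sim 1$. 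Applying~\eqref{eq:str} with admissible pair $(8,4)$ and $|E|\sim 1$ yields directly $\norm{\japbrak{D_y}^{\sigma'}\e^{it\A}P_{1,k}f_0}_{S_{N,D'}^r\intervalcc{-T}{T}}\lesssim \norm{\japbrak{D_y}^{\sigma'}P_{1,k}f_0}_{L_{x,y}^2}$.

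For the Duhamel integral I would perform a centered Littlewood-Paley decomposition $F_{N,k} = \sum_K P_{K,k}F_{N,k}$ and a standard decomposition $\phi = \sum_{L_1,L_2}P_{L_1}\phi\,P_{L_2}\phi$ with $L_2\leq L_1 \lesssim N^\gamma$. The crucial geometric fact is that $\mathcal{N}(P_{K,k}F_{N,k},P_{L_1}\phi,P_{L_2}\phi)$ has $y$-Fourier support in a set of size $\sim K+L_1$ centered at $\pm k$, so only dyadic output shells with $M\sim N$ contribute and for each such $M$ the effective Bernstein multiplier can be taken of support $|E|\lesssim K+L_1$ rather than $M$. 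Applying the $TT^*$ estimate~\eqref{eq:TTstar} with $(p,q)=(8,4)$ and $(\widetilde p,\widetilde q)=(4,\infty)$, so $(\widetilde p',\widetilde q')=(4/3,1)$, combined with a Hölder distribution $L_t^{4/3}L_x^1L_y^2 \hookrightarrow T^{1/2}\,L_t^\infty L_x^2 L_y^2 \cdot L_t^8 L_x^4 L_y^\infty \cdot L_t^8 L_x^4 L_y^\infty$, controls the $L_t^8L_x^4L_y^r$-norm of the $(K,L_1,L_2,M)$-piece by
\begin{equation*}
T^{1/2}(K+L_1)^{\frac12-\frac1r}\norm{P_{K,k}F_{N,k}}_{L_t^\infty L_{x,y}^2}\norm{P_{L_1}\phi}_{L_t^8L_x^4L_y^\infty}\norm{P_{L_2}\phi}_{L_t^8L_x^4L_y^\infty}.
\end{equation*}

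To conclude, I would insert the derivative $\japbrak{D_y}^{\sigma'}$, which contributes a factor $\sim N^{\sigma'}$ since the output is localized at frequencies $\sim N$, and then invoke the a priori bounds $\norm{P_{K,k}F_{N,k}}_{L_t^\infty L_{x,y}^2}\leq K^{-\rho}\max(1, K/N^\gamma)^{-D''}\norm{F_{N,k}}_{B_{k,D''}^{\rho,\gamma}\intervalcc{-T}{T}}$ stemming from~\eqref{eq:norm:B} and $\norm{P_L\phi}_{L_t^8L_x^4L_y^\infty}\lesssim L^{-\sigma}$ inherited from the truncation~\eqref{eq:theta_Fw}. The summation over $K\lesssim N^\gamma$ (the tail $K\gg N^\gamma$ being absorbed by $D''\gg 1$) and $L_2\leq L_1\lesssim N^\gamma$ reduces to estimating $\sum (K+L_1)^{\frac12-\frac1r}K^{-\rho}L_1^{-\sigma}L_2^{-\sigma}$; splitting into the regimes $K\leq L_1$ and $K>L_1$ and using $\rho<\sigma$, the worst contribution is of size $(N^\gamma)^{\frac12-\frac1r-\rho}$, which is exactly $N^{\gamma(\frac12-\frac1r)-\gamma\sigma+\delta}$ with $\delta=\gamma(\sigma-\rho)$. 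Combined with the $N^{\sigma'}$ derivative factor this yields the stated Strichartz exponent.

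The main obstacle is the careful bookkeeping of the Bernstein budget: it is essential that we use the narrow $y$-Fourier support $\sim K+L_1\lesssim N^\gamma$ of the trilinear product, rather than the dyadic shell size $M\sim N$; without this refinement we would suffer a full $N^{\frac12-\frac1r}$ loss incompatible with the iteration scheme. The Besov weight $K^{-\rho}$ supplied by Lemma~\ref{lem:loc-Fnk} is then what converts one of the $N^{\gamma/2}$ Bernstein factors into the $N^{-\gamma\sigma+\delta}$ gain announced in the lemma, and the dependence of the constant on $r$ comes from the geometric series that are summed in the $L_1$ and $L_2$ variables.
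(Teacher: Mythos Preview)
Your approach is essentially the paper's: same Duhamel decomposition, same centered Littlewood--Paley on $F_{N,k}$, same $TT^*$ estimate with $(p,q)=(8,4)$ and $(\widetilde p,\widetilde q)=(4,\infty)$, and the same key observation that the Bernstein loss is governed by the support width $\max(K,L_1)$ rather than by $M$.

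There is one genuine gap. Your claim that ``only dyadic output shells with $M\sim N$ contribute'' is correct only in the regime $K\ll N$ (since then the $y$-Fourier support sits within $O(K+L_1)\ll N$ of $\pm k$). When $K\gtrsim N$, however, the trilinear product can land at any $M$, and the $S_{N,D'}^r$ norm carries the growing weight $C_{N,D'}(M)=\max(N/M,M/N)^{D'}$ together with the factor $M^{\sigma'}$. Your parenthetical ``the tail $K\gg N^\gamma$ being absorbed by $D''\gg 1$'' does not address this: you must show that the Besov weight $\max(1,K/N^\gamma)^{-D''}$ with $K\gtrsim\max(M,N)$ is strong enough to kill both $C_{N,D'}(M)$ and $M^{\sigma'}$ and still leave a summable factor in $M$. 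This is exactly why the lemma asserts the existence of $D''$ depending on $D'$ (and not merely $D''\gg 1$ in absolute terms); the paper treats this case separately, bounding the contribution by $(NM)^{-10(D'+1)}\norm{F_{N,k}}_{B_{k,D''}^{\rho,\gamma}}$ for $D''$ large enough relative to $D'$, which then sums over all $M$.
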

\begin{proof}
We need to estimate
\[
\sum_MC_{N,D'}(M)M^{\sigma'}
 \norm{P_M F_{N,k}}_{L_t^8(\intervalcc{-T}{T};L_x^4L_y^r(\R^2))}
\,.
\]
Given a dyadic integer $M\gg1$  and $0\leq t\leq T$, the Duhamel integral formulation of the truncated equation~\eqref{eq:trun-Fn} reads
\[
P_MF_{N,k}(t) = \e^{it\A}P_MP_{1,k}f_0 + i\int_0^t \e^{i(t-\tau)\A}P_M\parent{\mathcal{N}(F_{N,k},\phi,\phi)}(\tau)\dd\tau\,.
\]
First, we estimate the linear evolution of the unit-scale block $P_MP_{1,k}f_0$, which is nonzero only when $M\sim N$. To do so, we use the Bernstein-Strichartz estimate~\eqref{eq:str} from Lemma~\ref{lem:str} with $(p,q)=(8,4)$ and $\abs{E}\lesssim 1$. There holds
\[
\norm{P_M\e^{it\A}P_{1,k}f_0}_{L_t^8L_x^4L_y^r} \lesssim \mathbf{1}_{M\sim N}\norm{P_{1,k}f_0}_{L_{x,y}^2}\,.
\]

To handle the nonlinear part, we perform a Littlewood-Paley decomposition of $\phi$:
\[
\phi\sim\sum_{1\leq L_\leq N^\gamma}P_L\phi\,,
\]
and a Littlewood-Paley decomposition of $F_{N,k}$ recentered around $k$:
\[
F_{N,k}\sim \sum_{1\leq K} P_{K,k}F_{N,k}\,.
\]
In what follows, without loss of generality, we assume that $L_1\geq L_2$. The frequency support in the variable $\eta$ of the Duhamel term has length 
\[
\abs{\supp_{\eta} \mathcal{F}_{y\to\eta}\parent{\mathcal{N}(P_{K,k}F_{N,k} ,P_{L_1}\phi, P_{L_2}\phi )}} \lesssim \max(L_1,K)\,.
\]
We use the Bernstein-Strichartz $TT^*$-estimate~\eqref{eq:TTstar} from Lemma~\ref{lem:str} with $(p,q)=(8,4)$, $(\tilde p,\tilde q)=(4,\infty)$ and $\abs{E}\lesssim \max(L_1,K)$, and get 
\begin{multline*}
\sum_{L_2\leq L_1\leq N^\gamma}\sum_{K}\norm{\int_{0}^t e^{i(t-\tau)\A} P_M\parent{\mathcal{N}(P_{K,k}F_{N,k},P_{L_1}\phi,P_{L_2}\phi)}(\tau)\dd\tau}_{L_t^{8}L_x^4L_y^r}\\
	\lesssim \sum_{L_2\leq L_1\leq N^\gamma}\sum_{K}\max(L_1,K)^{\frac{1}{2}-\frac{1}{r}}\norm{ P_M\parent{\mathcal{N}(P_{K,k}F_{N,k},P_{L_1}\phi,P_{L_2}\phi)}(\tau)\dd\tau}_{L_t^\frac{4}{3}L_x^1L_y^2}\,.
\end{multline*}
Let us first consider the contribution of the terms with $K\ll N$. In such a case, only the terms with $M\sim N$ contribute. Moreover, using the truncation~\eqref{eq:trun-Fnk}, so that $\norm{\japbrak{D_y}^\sigma \phi}_{L_t^8L_x^4L_y^\infty}\lesssim1$, 
\begin{multline*}
    M^{\sigma'}\sum_{L_2\leq L_1\leq N^\gamma}\sum_{K\ll N}\max(L_1,K)^{\frac{1}{2}-\delta}\norm{P_M\parent{\mathcal{N}(P_{K,k}F_{N,k},P_{L_1}\phi,P_{L_2}\phi)}}_{L_t^\frac{4}{3}L_x^1L_y^2}\\
    \lesssim \un_{M\sim N} N^{\sigma'}\sum_{L_2\leq L_1\leq N^\gamma}\sum_{K\ll N}\max(L_1,K)^{\frac{1}{2}-\delta}K^{-\rho}\max\parentbig{1,\frac{K}{N^\gamma}}^{-D''}L_1^{-\sigma}L_2^{-\sigma}\\c_{k,D''}^{\rho,\gamma}(K)\norm{P_{K,k}F_{N,k}}_{L_t^\infty L_{x,y}^2}\,.
    \end{multline*}
When $K\leq N^\gamma$ we get by comparing $K$ and $L_1$ that
\begin{multline*}
    \un_{M\sim N}  N^{\sigma'}\sum_{K,L_1,L_2\leq N^\gamma}\max(L_1,K)^{\frac{1}{2}-\frac{1}{r}}K^{-\rho}L_1^{-\sigma}L_2^{-\sigma} \norm{ F_{N,k}}_{B_{k,D''}^{\rho,\gamma}}\\
    \lesssim \un_{M\sim N}N^{\sigma'+\gamma(\frac{1}{2}-\frac{1}{r}-\sigma) +\delta} \norm{ F_{N,k}}_{B_{k,D''}^{\rho,\gamma}}\,.
\end{multline*}
On the other hand, when $K\geq N^{\gamma}$, taking $D''\gg \frac{1}{2}$ yields
\[
\sum_{N^\gamma\leq K \ll N} K^{\frac{1}{2}-\frac{1}{r}-\rho}K^{-D''} \lesssim N^{\gamma(\frac{1}{2}-\frac{1}{r}-\rho -D'')}\,,
\]
so that
\begin{multline*}
    \un_{M\sim N}  N^{\sigma'}\left( \sum_{N^\gamma\leq K\ll N}\sum_{L_1,L_2\leq N^\gamma}K^{\frac{1}{2}-\frac{1}{r}-\rho}\parentbig{\frac{K}{N^\gamma}}^{-D''}L_1^{-\sigma}L_2^{-\sigma} \right) \norm{F_{N,k}}_{B_{k,D''}^{\rho,\gamma}} \\
    \lesssim \un_{M\sim N}  N^{\sigma'+\gamma(\frac{1}{2}-\frac{1}{r}-\sigma)+\delta}\norm{F_{N,k}}_{B_{k,D''}^{\rho,\gamma}}\,.
\end{multline*}
Summing over $M\sim N$, in which case $C_{N,D'}(M)\sim 1$, and taking the $L^{\frac 43}$-norm in time for $T\leq 1$, we obtain the desired bound for the contributions with $K\ll N$:
\begin{multline*}
    \sum_{M\sim N}C_{N,D'}(M) M^{\sigma'}\sum_{L_1,L_2\leq N^\gamma}\sum_{K\ll N}\max(L_1,K)^{\frac{1}{2}-\delta}\norm{P_M\parent{\mathcal{N}(P_{K,k}F_{N,k},P_{L_1}\phi,P_{L_2}\phi)}}_{L_t^\frac{4}{3}L_x^1L_y^2}\\
    \lesssim T^\frac{1}{2}N^{\sigma'+\gamma(\frac{1}{2}-\frac{1}{r}-\sigma) +\delta}\norm{F_{N,k}}_{L_t^\infty B_{k,D''}^{\rho,\gamma}}\,.
\end{multline*}

Next, we handle the contribution of the terms with $K\gtrsim N$, where only the terms with $K\gtrsim \max(N,M)$ actually contribute. We shall use the localization around frequency $K\sim N^\gamma $ of $F_{N,k}$ imposed by the norm $B_{k,D''}^{\rho,\gamma}$ to prove smallness: for sufficiently large $D''$ depending on $D'$, there holds 
\begin{multline*}
        M^{\sigma'}\sum_{L_2\leq L_1\leq N^{\gamma}}\sum_{K\gtrsim N}K^{\frac{1}{2}-\frac{1}{r}}\norm{P_M\parent{\mathcal{N}(P_{K,k}F_{N,k},P_{L_1}\phi,P_{L_2}\phi)}}_{L_t^\frac{4}{3}L_x^1L_y^2}\\
	\lesssim T^\frac{1}{2}M^{\sigma'}\sum_{K\gtrsim \max(N,M)}K^{\frac{1}{2}-\frac{1}{r}}\norm{P_{K,k}F_{N,k}}_{L_t^\infty L_{x,y}^2} \\
	\lesssim T^\frac{1}{2}M^{\sigma'}\sum_{K\gtrsim \max(N,M)}K^{\frac{1}{2}-\frac{1}{r}}K^{-\rho}\parentbig{\frac{K}{N^\gamma}}^{-D''}\norm{F_{N,k}}_{B_{k,D''}^{\rho,\gamma}}
\\
\lesssim T^\frac{1}{2}\parent{NM}^{-10(D'+1)}\norm{F_{N,k}}_{ B_{k,D''}^{\rho,\gamma}}\,.
\end{multline*}
Hence, the weight  $C_{N,D'}(M)=\max(\frac{N}{M},\frac{M}{N})^{D'}$ and the term $M^{\sigma'}$ are absorbed by the factor $\parent{NM}^{-10(D'+1)}$: we can sum over $M$ to get 
\begin{multline*}
    \sum_M C_{N,D'}(M)M^{\sigma'}\sum_{L_2\leq L_1\leq N^\gamma}\sum_{K\gtrsim N}K^{\frac{1}{2}-\frac{1}{r}}\norm{ P_M\parent{\mathcal{N}(P_{K,k}F_{N,k},P_{L_1}\phi,P_{L_2}\phi)}}_{L_t^\frac{4}{3}L_x^1L_y^2} \\
    \lesssim T^\frac{1}{2}\norm{ F_{N,k}}_{ B_{k,D''}^{\rho,\gamma}}\,.
\end{multline*}
This concludes the proof of the refined Strichartz estimates for the unit blocks $F_{N,k}$.
\end{proof}

Next, we infer probabilistic Strichartz estimates on the whole adapted linear evolution $F_N$ from the above estimate on the individual blocks $F_{N,k}$, combined with probabilistic decoupling (we recall that the high frequencies of the initial data are independent of the low frequencies). The proof follows the same lines as in~\cite{bringmann2021-ansatz}, Proposition 4.4.
\begin{prop}[Probabilistic Strichartz estimates]
\label{prop:srt-proba}
Let $F_N$ be a solution to the truncated equation~\eqref{eq:trun-Fn}, $s>0$, $D'>0$ and $T_0$ be as in Lemmas~\ref{lem:loc-Fnk} and~\ref{lem:loc-Fn}. There holds that for all $0<T\leq T_0$, $p\geq1$ and $\sigma'>0$,
\begin{equation}
\label{eq:str-rand}
    \norm{\japbrak{D_y}^{\sigma'}F_N}_{L_\omega^p(\Omega,S_{N,D'}\intervalcc{-T}{T})}
	+\lesssim_{\delta}  \sqrt{p}T^{\frac{1}{2}}N^{\sigma'+\frac{\gamma}{2}-s-\gamma\sigma+2\delta}\norm{\widetilde{P_N}f_0}_{L_x^2H_y^s}\,,
\end{equation}
where $\delta= \gamma(\sigma-\rho)>0$.
\end{prop}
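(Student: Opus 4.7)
The plan is to combine the deterministic refined Strichartz estimate from Lemma~\ref{lem:strichartz_block} on each building block $F_{N,k}$ with a Gaussian decoupling argument, exploiting the crucial independence between the high-frequency Gaussians $(g_k)_{\frac{N}{2}\leq |k|<N}$ and the blocks $(F_{N,k})$. Since the truncated potential $\phi = \theta_{F,w;\leq \frac{N}{2}}P_{\leq N^\gamma}u_{\frac{N}{2}}$ depends only on the low-frequency randomness (it is $\mathcal{B}_{<\frac{N}{2}}$-measurable), each $F_{N,k}$ is $\mathcal{B}_{<\frac{N}{2}}$-measurable and hence independent of the relevant Gaussians. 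The probabilistic gain comes from two sources: Kahane--Khintchine provides the $\sqrt{p}$-factor, while the $\ell^2_k$-summation over the normalized Gaussian coefficients produces the $N^{-s}$-gain from the $\mathcal{H}^s$-normalization of the initial data.

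Since the target norm $S_{N,D'} = S^\infty_{N,D'}$ involves $L^\infty_y$, which is not amenable to Minkowski/Fubini-type exchanges with $L^p_\omega$, I first reduce to a finite $L^r_y$-norm via Bernstein's inequality in $y$: for each dyadic scale~$M$, $\norm{P_M F_N}_{L^\infty_y} \lesssim M^{1/r} \norm{P_M F_N}_{L^r_y}$, so that
\[
\norm{\japbrak{D_y}^{\sigma'} F_N}_{S_{N,D'}\intervalcc{-T}{T}} \lesssim \norm{\japbrak{D_y}^{\sigma'+\frac{1}{r}} F_N}_{S^r_{N,D'}\intervalcc{-T}{T}},
\]
with $r$ chosen large enough so that $(1-\gamma)/r \leq \delta$. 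Conditioning on $\mathcal{B}_{<\frac{N}{2}}$ and applying Kahane--Khintchine to the Gaussian sum $F_N = \sum_k g_k F_{N,k}$ valued in the Banach space $S^r_{N,D'}\intervalcc{-T}{T}$ then reduces the task to bounding the $L^2_\omega$-norm, at the cost of an additional $\sqrt{p}$.

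For the $L^2_\omega$-bound I apply the standard square-function argument scale-by-scale: triangle inequality in the $\ell^1_M$ sum defining the $S$-norm, Minkowski to move $L^2_\omega$ inside $L^8_tL^4_xL^r_y$ (valid since $2 \leq 4,8,r$), Gaussian orthogonality to compute the conditional variance as $(\sum_k |P_M F_{N,k}|^2)^{1/2}$, and Minkowski once more to extract the $\ell^2_k$-norm outside the mixed-Lebesgue norm. The weighted structure of $S^r_{N,D'}$ gives the per-scale bound $\norm{P_M F_{N,k}}_{L^8_t L^4_x L^r_y} \leq (C_{N,D'}(M) M^{\sigma'+1/r})^{-1} \norm{\japbrak{D_y}^{\sigma'+1/r} F_{N,k}}_{S^r_{N,D'}\intervalcc{-T}{T}}$, and Lemma~\ref{lem:strichartz_block} combined with Lemma~\ref{lem:loc-Fnk} yields the deterministic control
\[
\norm{\japbrak{D_y}^{\sigma'+\frac{1}{r}} F_{N,k}}_{S^r_{N,D'}\intervalcc{-T}{T}} \lesssim \parent{N^{\sigma'+\frac{1}{r}} + T^{\frac{1}{2}}N^{\sigma' + \frac{\gamma}{2} - \gamma\sigma + 2\delta}} \norm{P_{1,k}f_0}_{L^2_{x,y}}.
\]
Using $\norm{P_{1,k}f_0}_{L^2_{x,y}} \sim |k|^{-s} \norm{\japbrak{D_y}^s P_{1,k}f_0}_{L^2_{x,y}}$ for $|k|\sim N$, together with the almost-orthogonality $\sum_k \norm{\japbrak{D_y}^s P_{1,k}f_0}_{L^2_{x,y}}^2 \sim \norm{\widetilde{P_N}f_0}_{L^2_x H^s_y}^2$, the sum over $k$ produces the factor $N^{-s}\norm{\widetilde{P_N}f_0}_{L^2_x H^s_y}$, and the desired exponent $\sigma' + \frac{\gamma}{2} - s - \gamma\sigma + 2\delta$ follows.

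The main subtlety is the exchange of the $\ell^1_M$-sum in the $S$-norm with the $\ell^2_k$-square function from decoupling, since Minkowski's inequality points the wrong direction. This is handled by using the weighted per-scale bound above and paying a $\log N$ factor from the $O(\log N)$ dyadic scales contributing non-negligibly (thanks to the frequency localization of each $F_{N,k}$ at distance $\lesssim N^\gamma$ from $|k|\sim N$ provided by Lemma~\ref{lem:loc-Fnk}); this $\log N$ is absorbed into the $N^\delta$-slack. The initial-data contribution of order $\sqrt{p}\,N^{\sigma' - s + O(\delta)}$ without the $T^{\frac{1}{2}}$-prefactor is dominated by the displayed bound for $N$ beyond a $T$-dependent threshold (using $\frac{\gamma}{2} - \gamma\sigma > 0$, which holds since $\sigma < \frac{1}{2}$) and, for smaller $N$, by the trivial deterministic Strichartz estimate.
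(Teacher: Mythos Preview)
Your overall strategy matches the paper's: condition on $\mathcal{B}_{<N/2}$, exploit Khintchine-type decoupling for the Gaussian sum $\sum_k g_k F_{N,k}$, reduce $S_{N,D'}=S^\infty_{N,D'}$ to $S^r_{N,D'}$ with $r$ large via Bernstein/Sobolev in $y$, and feed in Lemmas~\ref{lem:strichartz_block} and~\ref{lem:loc-Fnk}. Two points deserve comment.

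\textbf{The $\ell^1_M/\ell^2_k$ exchange.} Your handling of this step is the one genuine gap. After your chain of inequalities you arrive at
\[
\sum_M C_{N,D'}(M)M^{\sigma'+1/r}\Bigl(\sum_k \norm{P_M F_{N,k}}_{L^8_tL^4_xL^r_y}^2\Bigr)^{1/2},
\]
and you need to bound this by $\bigl(\sum_k \norm{\japbrak{D_y}^{\sigma'+1/r}F_{N,k}}_{S^r_{N,D'}}^2\bigr)^{1/2}$. Your proposed fix --- pay a $\log N$ factor from ``$O(\log N)$ contributing scales'' --- is not justified: the localization of $F_{N,k}$ near $|k|\sim N$ is only in a weighted-norm sense (Lemma~\ref{lem:loc-Fnk}), so every dyadic $M$ contributes a nonzero tail, and once you apply your per-scale bound $\norm{P_M F_{N,k}}\leq (C_{N,D'}(M)M^{\sigma'+1/r})^{-1}\norm{F_{N,k}}_{S^r}$ you have thrown away the $M$-decay entirely, leaving a divergent sum. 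The paper resolves this cleanly by a weighted Cauchy--Schwarz in $M$: since $\sum_M C_{N,1}(M)^{-2}<\infty$, one has
\[
\sum_M C_{N,D'}(M)\,b_M \lesssim \Bigl(\sum_M C_{N,D'+1}(M)^2 b_M^2\Bigr)^{1/2},
\]
which converts $\ell^1_M$ to $\ell^2_M$ at the cost of raising $D'$ to $D'+1$. After that, $\ell^2_M$ and $\ell^2_k$ commute by Fubini, and one returns to $S^r_{N,D'+1}$ using the trivial bound $\ell^2_M\leq\ell^1_M$. This is the step you should replace your $\log N$ heuristic with.

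\textbf{The initial-data term.} Your final paragraph correctly notices that Lemma~\ref{lem:strichartz_block} produces a term $\sqrt{p}\,N^{\sigma'-s+1/r}\norm{\widetilde{P_N}f_0}_{L^2_xH^s_y}$ without the $T^{1/2}$ prefactor, which cannot be absorbed into the displayed bound uniformly in $T\in(0,T_0]$ for bounded $N$. The paper's own proof in fact ends with two terms as well (namely $\sqrt{p}\,\norm{\widetilde{P_N}f_0}_{L^2_xH^{\sigma'}_y}$ plus the Duhamel contribution), so the stated inequality~\eqref{eq:str-rand} appears to omit this first term; both terms have negative powers of $N$ and suffice for the applications in Section~\ref{sec:conclusion}. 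Your attempted absorption argument is therefore unnecessary (and, as you suspected, does not work uniformly in $T$).
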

The contributions to the power of $N$ can be explained as follows:
\[
N^{\sigma'-s+\frac{\gamma}{2}-\gamma\sigma}=\underbrace{N^{\sigma'-s}}_{\substack{\text{difference}  \\ \text{of derivatives}}}\cdot \underbrace{N^\frac{\gamma}{2}}_{\substack{\text{Strichartz with freq.} \\ \text{localization }}}\cdot\underbrace{N^{-\gamma\sigma}}_{\substack{\text{gain frequency} \\ \text{ localization}}}\,. 
\]
\begin{proof}
The idea is to use the independence between $\{g_l\colon \frac{N}{2}\leq |l|<N\}$ and $F_{N,k}$ which is $\mathcal{B}_{\frac N2}=\sigma(g_l\colon |l|<\frac{N}{2})$-measurable thanks to the assumptions on $\phi$. In order to use Minkowski's inequality in the large deviation estimates, we first prove the result with the norm $S_{N,D'}^{r}$ with $r\gg1$ instead of $S_{N,D'}$ (which denotes $S_{N,D'}^\infty$). We condition on $\mathcal{B}_{\frac N2}$, take $p\geq r$ and deduce from Minkowski's inequality that
\begin{equation*}
\label{eq:cond-kin}
    \begin{split}
        \norm{\japbrak{D_y}^{\sigma'}F_N}_{L^p_\omega S_{N,D'}^r}
	&=\mathbb{E}\Big[\mathbb{E}\Big[\norm{C_{N,D'}(M)\japbrak{D_y}^{\sigma'}P_MF_N}_{\ell_M^1L_t^8L_x^4L_y^r}^p\Big|\mathcal{B}_{\frac N2}\Big]\Big]^{\frac 1p}\\
	&\leq \mathbb{E}\Big[\Big\|\mathbb{E}\Big[\big|C_{N,D'}(M)\japbrak{D_y}^{\sigma'}P_MF_N\big|^p\Big|\mathcal{B}_{\frac N2}\Big]^{\frac 1p}\Big\|_{\ell_M^1L_t^8L_x^4L_y^r}^p\Big]^{\frac 1p}\,.
    \end{split}
\end{equation*}
Since 
\[
F_N=\sum_{\frac{N}{2}\leq |k|<N}g_k(\omega)F_{N,k}\,,
\]
The conditional Khintchine's inequality and the mutual independence between the $(g_k)$'s and the $(F_{N,k})$'s yield
\begin{equation*}
\mathbb{E}\left[\abs{\japbrak{D_y}^{\sigma'}P_MF_N}^p\Big|\mathcal{B}_{\frac N2}\right]^\frac{1}{p}
	\leq \sqrt{p}\,\norm{\mathbf{1}_{\frac{N}{2}\leq \abs{k}<N}\japbrak{D_y}^{\sigma'}P_MF_{N,k}}_{\ell_k^2}\,.
\end{equation*}
Plugging into~\eqref{eq:cond-kin}, 
\[
\norm{\japbrak{D_y}^{\sigma'}F_N}_{L^p_\omega S_{N,D'}^r}\lesssim\norm{\mathbf{1}_{\frac{N}{2}\leq \abs{k}<N}C_{N,D'}(M)\japbrak{D_y}^{\sigma'}P_MF_{N,k}}_{L_\omega^p \ell^1_M L_t^8L_x^4L_y^r\ell_k^2}\,.
\]
Thanks to the Cauchy-Schwarz inequality, we can go from $\ell^1_M$ to $\ell^2_M$ up to replacing the weight $C_{N,D'}$ by $C_{N,D'+1}$.
Using the Minkowski inequality again, we obtain
\begin{equation*}
    \begin{split}
        \norm{\japbrak{D_y}^{\sigma'}F_N}_{L^p_\omega S_{N,D'}^r}
	&\lesssim \sqrt{p}\,\norm{C_{N,D'+1}(M)\japbrak{D_y}^{\sigma'}P_MF_{N,k}}_{L_\omega^p\ell_M^1L_t^8L_x^4L_y^r\ell^2_k}\\
	&\lesssim\sqrt{p}\,\norm{C_{N,D'+1}(M)\japbrak{D_y}^{\sigma'}P_MF_{N,k}}_{\ell^2_kL_\omega^p\ell_M^2L_t^8L_x^4L_y^r}\\
	&\lesssim\sqrt{p}\,\norm{C_{N,D'+1}(M)\japbrak{D_y}^{\sigma'}P_MF_{N,k}}_{\ell^2_kL_\omega^pS_{N,D'+1}^r}\,.
    \end{split}
\end{equation*}
In the last estimate, we used that $\ell_M^1\subset\ell_M^2$. Finally, we apply the refined Strichartz estimate for the individual blocks $F_{N,k}$ (Lemma~\ref{lem:strichartz_block}). The frequency localization stated in Lemma~\ref{lem:loc-Fnk} yields 
\begin{align*}
\norm{\japbrak{D_y}^{\sigma'}F_N}_{L^p_\omega S_{N,D'}^r}
	&\lesssim  \sqrt{p} \norm{\japbrak{D_y}^{\sigma'}P_{1,k}f_0}_{\ell^2_k }
    +  \sqrt{p} T^{\frac{1}{2}}N^{\sigma'+\gamma\parent{\frac{1}{2}-\frac{1}{r}}-\gamma\sigma+\delta}\norm{ F_{N,k}}_{\ell^2_k  B_{k,D''}^{\rho,\gamma}}\\
    &\lesssim\sqrt{p}\, \norm{\widetilde{P}_Nf_0}_{L_x^2H_y^{\sigma'}}+\sqrt{p}T^{\frac{1}{2}}N^{\sigma'+\gamma\parent{\frac{1}{2}-\frac{1}{r}}-\gamma\sigma-s+\delta}\norm{\widetilde{P}_Nf_0}_{L_x^2H_y^s}\,.
\end{align*}
Meanwhile, the estimate for $p\leq \min(4,r)$ follows from Hölder's inequality. Next, we pass from $S_{N,D'}$ to $S_{N,D'}^r$ with $r\gg1$ by using the Sobolev embedding in $y$, and by loosing therefore an arbitrarily small power of $N^\frac{1}{r}$ (see Step 3 in the proof of Proposition 4.4 from~\cite{bringmann2021-ansatz}). Here we can choose $N^{\delta}$.
\end{proof}

\section{Estimates on the nonlinear term \texorpdfstring{$w_N$}{wn}}\label{sec:nonlinear_evolution}
For fixed $N$, we construct the local solution to the truncated equation on a time interval $[-T_0, T_0]$ that does not depend on $N$.
\begin{prop}
\label{prop:w}
Assume that  $0<\sigma < \nu-\frac{1}{2}$, $0<\sigma<\sigma'<s$ and $0<\gamma<1$. Fix $\alpha>0$ such that $\max(\nu-\sigma',\sigma)<\alpha<\nu$. Let $D= D(s,\nu,\sigma',\sigma,\alpha)>0$ and $D'\geq D'(s,\nu,\sigma',\sigma,\alpha,D)>0$ be sufficiently large. There exist $T_0=T_0(s,\nu,\sigma',\sigma,\alpha,D,D')>0$ smaller than the $T_0$ in Lemmas~\ref{lem:loc-Fnk} and~\ref{lem:loc-Fn}, and a unique solution $w_N\in Y_N^\nu(\intervalcc{-T_0}{T_0})$ to the truncated equation~\eqref{eq:wtheta}. In addition, for all $0<T<T_0$,
\begin{equation}
    \label{eq:a-priori}
    \norm{w_N}_{Y_N^\nu(\intervalcc{-T}{T})}\lesssim_\delta T^\frac{1}{2}N^{\nu-\sigma'-\gamma\nu}\norm{\japbrak{D_y}^{\sigma'}F_N}_{S_{N,D'}}+T^\frac{1}{2}N^{\nu-s-\gamma\sigma'}\norm{\japbrak{D_y}^sF_N}_{X_{N,D'}}\,.
\end{equation}
\end{prop}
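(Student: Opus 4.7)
The plan is to set up a contraction mapping argument in the Banach space $Y_N^\nu(\intervalcc{-T_0}{T_0})$ for the Duhamel operator associated with~\eqref{eq:wtheta}. Let $\Phi(w)(t) = i\int_0^t e^{i(t-\tau)\mathcal{A}}\widetilde{\mathcal{N}}_\theta(w)(\tau)\dd\tau$ and choose a closed ball in $Y_N^\nu$ of radius $R_N$ equal to the right-hand side of~\eqref{eq:a-priori}. We shall show $\Phi$ maps this ball to itself and is a strict contraction for $T$ smaller than some universal $T_0$; uniqueness and the a priori bound~\eqref{eq:a-priori} then follow by standard Picard iteration.

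For each of the four norms composing $Y_N^\nu$, I would first dualize the Duhamel integral through the relevant Bernstein--Strichartz or $TT^*$ estimate of Lemma~\ref{lem:str}. Concretely, the pieces $\norm{\japbrak{D_y}^\nu w}_{X_{N,\alpha}}$ and $\norm{\japbrak{D_y}^\nu w}_{X_{\leq N,D}}$ are controlled by the dual $L_t^{4/3}L_x^1L_y^2$-norm of the nonlinearity localized at each dyadic $M$, while the Strichartz pieces $\norm{\japbrak{D_y}^\sigma w}_{S_{N,\alpha}}$, $\norm{\japbrak{D_y}^\sigma w}_{S_{\leq N,D}}$ are controlled by the same dual norm after applying the $TT^*$ estimate with $(p,q)=(8,4)$ and $(\tilde p,\tilde q)=(4,\infty)$. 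In each case, the Bernstein derivative loss $|E|^{1/2-1/r}$ is compensated using the frequency localization of the various factors (Lemma~\ref{lem:loc-Fn} for $F_N$, definition of $X_{\leq N^\gamma}$ for $u_{N/2}$, and the weighted norm structure for $w_N$ itself).

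The work then reduces to trilinear estimates for each of the nine cubic terms in~\eqref{eq:Ntilde}; these are precisely the paracontrolled trilinear estimates of Section~\ref{section:trilinear_proof}, invoked in a black-box manner. The terms linear, quadratic or cubic in $w_N$ can all be closed as \emph{small} contributions: the cutoffs $\theta_{F;N}$, $\theta_{w;N}$, $\theta_{F,w;\leq N/2}$ bound the ambient norms of $F_N$, $w_N$ and $u_{N/2}$ by $O(1)$, one $w_N$ factor is put in $Y_N^\nu$ to generate the $R_N$ factor of the ball, and a time integration yields the contraction gain $T^{1/2}$ from Hölder. The \emph{source terms} giving rise to the right-hand side of~\eqref{eq:a-priori} are those independent of $w_N$, namely $\theta_{F;N}^2\mathcal{N}(F_N)$, $\theta_{F,w;\leq N/2}\theta_{F;N}\mathcal{N}(F_N,F_N,u_{N/2})$ and the correction term $\theta_{F,w;\leq N/2}^2\bigl(\mathcal{N}(F_N,u_{N/2},u_{N/2})-\mathcal{N}(F_N,P_{\leq N^\gamma}u_{N/2},P_{\leq N^\gamma}u_{N/2})\bigr)$. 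For the purely $F_N$ cubic term, Lemma~\ref{lem:loc-Fn} localizes each factor at $y$-frequency $\sim N$ (up to polynomial tails controlled by the weights $C_{N,D'}$), so distributing the $\nu$ derivatives on the output and using Strichartz on two of the three $F_N$ factors in $L_x^4 L_y^\infty$ yields an $N^{\nu-\sigma'-\gamma\nu}\norm{\japbrak{D_y}^{\sigma'}F_N}_{S_{N,D'}}$ loss, where the $-\gamma\nu$ gain comes from $\sigma = \sigma'-(\sigma'-\sigma)$ not being the largest exponent on all three factors. For the correction term, the frequency projection $1-P_{\leq N^\gamma}$ provides a factor $N^{-\gamma \sigma'}$ gain on one $u_{N/2}$-slot, producing the $N^{\nu-s-\gamma\sigma'}$ weight in front of $\norm{\japbrak{D_y}^sF_N}_{X_{N,D'}}$.

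The principal difficulty is the careful tracking of the weighted norms $X_{N,\alpha}$, $X_{\leq N,D}$, $S_{N,\alpha}$, $S_{\leq N,D}$ in the trilinear estimates: one must ensure that the output frequency weights are correctly reproduced after summing over frequency triples $(K,L_1,L_2)$ arising in the three factors. The singular exponent $\alpha$ at frequencies near $N$ (rather than the larger exponent $D$) is what obstructs us from treating \emph{all} interactions uniformly; the paracontrolled structure, which is encoded in the difference between $X_{N,\alpha}$ and $X_{\leq N, D}$, must be propagated through the Duhamel operator. Choosing $D'\gg D\gg \alpha > \max(\nu-\sigma',\sigma)$ provides enough room to absorb all the high-frequency tails of $F_N$, $u_{N/2}$ and $w_N$ in the off-diagonal regimes, while Hölder in time produces the $T^{1/2}$ prefactor needed both for the contraction and for the a priori bound~\eqref{eq:a-priori}. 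Once $T_0$ is fixed small enough depending on all parameters, iteration closes and $w_N$ exists uniquely in $Y_N^\nu$.
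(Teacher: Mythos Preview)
Your overall architecture matches the paper's: contraction mapping in $Y_N^\nu$, with the trilinear estimates of Section~\ref{section:trilinear_proof} invoked as black boxes and the $T^{1/2}$ gain from H\"older in time. However, there are two concrete issues.

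First, your attribution of the source terms is wrong. The factor $N^{\nu-\sigma'-\gamma\nu}\norm{\japbrak{D_y}^{\sigma'}F_N}_{S_{N,D'}}$ does \emph{not} come from the purely cubic term $\mathcal{N}(F_N)$; in the paper that term contributes $T^{1/2}N^{\nu-s-\sigma'}\norm{\japbrak{D_y}^s F_N}_{X_{N,D'}}$ (via~\eqref{eq:ff-hl},~\eqref{eq:fg},~\eqref{eq:fg-lhh},~\eqref{eq:fg-hh}), which is dominated by the second summand in~\eqref{eq:a-priori} since $\gamma<1$. Both summands in~\eqref{eq:a-priori} actually arise from the \emph{correction term}, after splitting $P_{>N^\gamma}u_{N/2}=P_{>N^\gamma}w_{\leq N/2}+P_{>N^\gamma}F_{\leq N/2}$: the $N^{-\gamma\nu}$ gain comes from the high-pass filter acting on the $w$-type piece (estimate~\eqref{eq:fv-gam-hl}), and the $N^{-\gamma\sigma'}$ gain from the $F$-type piece (estimate~\eqref{eq:fg-gam-hl}). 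Your heuristic ``$\sigma=\sigma'-(\sigma'-\sigma)$ not being the largest exponent'' has no relation to the actual mechanism.

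Second, your contraction step has a genuine gap. The truncation $\theta_{w;N}(\tau)=\theta(\norm{w}_{Y_N^\nu[-\tau,\tau]})$ depends on $w$, so when you form the difference $\Phi(z)-\Phi(w)$ for the cubic and quadratic pieces in $w$, you cannot simply bound the $\theta$'s by $1$ and multiply by $\norm{z-w}_{Y_N^\nu}$: the term $(\theta_{z;N}^2-\theta_{w;N}^2)|z|^2z$ carries a factor $\norm{z}_{Y_N^\nu}^3$, and nothing in your ball argument keeps that uniformly small (the ball radius $R_N$ is not $O(1)$). The paper resolves this exactly as in De Bouard--Debussche~\cite{DeBouardDebussche99}: one introduces $t_z=\sup\{t\leq T:\norm{z}_{Y_N^\nu[-t,t]}\leq 2\}$ and $t_w$, observes that $\theta_{z;N}$ vanishes past $t_z$, and splits the time integral over $[0,t_z]$, $(t_z,t_w]$ accordingly; on each piece the cubic factor is then bounded by $2^3$ and the Lipschitz bound on $\theta$ closes the contraction with constant $CT^{1/2}$. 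Without this step the argument does not close.
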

To prove such a proposition we do a contraction mapping argument, which is a consequence of the a priori estimates stated below.

\subsection{A priori trilinear estimates}
\label{sec:a-priori}
We denote the Duhamel integral by
\[
\mathcal{I}f(t):=\int_0^t e^{i(t-\tau)\A}f(\tau)\dd\tau\,,
\]
and we write
\begin{equation}
    \label{eq:gamma}
    \Gamma w(t):=\mathcal{I}\widetilde{\mathcal{N}}_{\theta}(w)(t)\,,
\end{equation}
where we recall that $\widetilde{\mathcal{N}}_\theta$ was defined in~\eqref{eq:Ntilde}. Applying the trilinear estimates from Section~\ref{section:trilinear_proof}, together with the truncation of the nonlinearity, we obtain the following a priori estimates on the nonlinear component $w_N$.
\begin{prop}[A priori estimate on $w_N$]
Under the same constraints on the parameters as in the statement of Proposition~\ref{prop:w}, for $0\leq T\leq T_0$, there holds
\[
\norm{w_N}_{Y_N^\nu(\intervalcc{-T}{T})}\lesssim T^\frac{1}{2}\norm{w_N}_{Y_N^\nu(\intervalcc{-T}{T})}+T^\frac{1}{2}N^{\nu-\sigma'-\gamma\nu}\norm{\japbrak{D_y}^{\sigma'}F_N}_{S_{N,D'}}+T^\frac{1}{2}N^{\nu-s-\gamma\sigma'}\norm{\japbrak{D_y}^sF_N}_{X_{N,D'}}\,.
\]
\end{prop}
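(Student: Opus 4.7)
The plan is to estimate $\Gamma w_N = \mathcal{I}\widetilde{\mathcal{N}}_\theta(w_N)$ from~\eqref{eq:gamma} in each of the four pieces composing the norm $Y_N^\nu$, namely the weighted norms $\|\langle D_y\rangle^\nu\cdot\|_{X_{N,\alpha}}$, $\|\langle D_y\rangle^\nu\cdot\|_{X_{\leq N,D}}$, $\|\langle D_y\rangle^\sigma\cdot\|_{S_{N,\alpha}}$ and $\|\langle D_y\rangle^\sigma\cdot\|_{S_{\leq N,D}}$. For the $X$-pieces I rely on the energy inequality (unitarity of $e^{it\mathcal{A}}$), and for the $S$-pieces on the Bernstein--Strichartz $TT^*$-estimate~\eqref{eq:TTstar} with $(p,q,r)=(8,4,\infty)$ and dual pair $(\widetilde p',\widetilde q')=(4/3,1)$. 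In both cases the nonlinearity is placed in an $L^{4/3}_tL^1_xL^2_y$-type space, so that a Hölder step in time on $[-T,T]$ extracts the factor $T^{1/2}$ appearing throughout the statement.

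The nonlinearity $\widetilde{\mathcal{N}}_\theta(w_N)$ in~\eqref{eq:Ntilde} naturally splits into three categories. The source-only terms are $\theta_{F;N}^2\mathcal{N}(F_N)$, the mixed term $\theta_{F,w;\leq N/2}\theta_{F;N}\mathcal{N}(F_N,F_N,u_{N/2,\theta})$, and the truncation remainder $\theta_{F,w;\leq N/2}^2\bigl[\mathcal{N}(F_N,u_{N/2,\theta},u_{N/2,\theta})-\mathcal{N}(F_N,P_{\leq N^\gamma}u_{N/2,\theta},P_{\leq N^\gamma}u_{N/2,\theta})\bigr]$. All remaining six terms carry at least one explicit $w_N$-factor. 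For these, the cutoffs $\theta_{F;N}$, $\theta_{w;N}$ and $\theta_{F,w;\leq N/2}$ defined in~\eqref{eq:theta_Fw} bound the associated norms of $F_N$, $u_{N/2,\theta}$ and of the lower iterates by $O(1)$, leaving $w_N$ to be controlled in $Y_N^\nu$; the corresponding trilinear estimates from Section~\ref{section:trilinear_proof} then produce the contribution $T^{1/2}\|w_N\|_{Y_N^\nu}$ in the statement.

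For the three source terms I invoke the relevant trilinear bounds from Section~\ref{section:trilinear_proof}. For the pure $F_N^3$ term, two of the Strichartz slots are paid for by $\|\langle D_y\rangle^{\sigma'}F_N\|_{S_{N,D'}}$ and the remaining slot by $\|\langle D_y\rangle^{s}F_N\|_{X_{N,D'}}$. The derivative mismatch between the input regularities $\sigma',s$ and the output regularity $\nu$ costs factors $N^{\nu-\sigma'}$ or $N^{\nu-s}$, which are partially compensated by $N^{-\gamma\nu}$ respectively $N^{-\gamma\sigma'}$ savings coming from the almost frequency localization of $F_N$ near $N$ established in Lemma~\ref{lem:loc-Fn}. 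This gives exactly the two explicit source terms of the statement. The mixed term $\mathcal{N}(F_N,F_N,u_{N/2,\theta})$ is absorbed in the second source term after bounding the $u_{N/2,\theta}$-factor by $1$ via its truncation; the truncation remainder is treated similarly but uses in addition the frequency support $|\eta|\gtrsim N^\gamma$ of $(1-P_{\leq N^\gamma})u_{N/2,\theta}$, which yields the desired $N^{-\gamma\sigma'}$ gain.

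The main obstacle is the bookkeeping for the nine summands of~\eqref{eq:Ntilde}: for each of the four output norms composing $Y_N^\nu$ one must decide how to distribute the three inputs among the $X$-slot and the two $S$-slots, verify that the Littlewood--Paley sums converge, and check that in every \emph{high-low-low} configuration the high frequency is carried by $F_N$ (so that Lemmas~\ref{lem:loc-Fnk} and~\ref{lem:loc-Fn} can be invoked) rather than by $w_N$ or $u_{N/2,\theta}$. This delicate case analysis is exactly the content of Section~\ref{section:trilinear_proof}; once those trilinear estimates are in hand, the proposition follows by summing their contributions and using the truncation to bound all $O(1)$ factors.
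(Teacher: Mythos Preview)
Your overall structure matches the paper's: reduce to the trilinear estimates of Section~\ref{section:trilinear_proof}, place the nonlinearity in $L_t^{4/3}L_x^1L_y^2$, extract $T^{1/2}$ by H\"older, and use the truncations~\eqref{eq:theta_Fw} to bound all auxiliary factors by $O(1)$. The six terms containing an explicit $w_N$ indeed produce $T^{1/2}\|w_N\|_{Y_N^\nu}$.

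However, your account of the source terms contains a genuine error. The pure term $\mathcal{N}(F_N,F_N,F_N)$ and the mixed term $\mathcal{N}(F_N,F_N,u_{N/2})$ do \emph{not} produce the two $\gamma$-dependent source terms in the statement; by~\eqref{eq:ff-hl},~\eqref{eq:fg},~\eqref{eq:fg-lhh},~\eqref{eq:fg-hh} they both give the strictly smaller contribution $T^{1/2}N^{\nu-s-\sigma'}\|\langle D_y\rangle^sF_N\|_{X_{N,D'}}$, which is absorbed into the last source term since $\gamma<1$. The two explicit source terms come \emph{only} from the truncation remainder, and the mechanism is not the frequency localization of $F_N$ from Lemma~\ref{lem:loc-Fn}. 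The correct argument, as in the paper, is to split
\[
P_{>N^\gamma}u_{N/2,\theta}=P_{>N^\gamma}w_{\leq N/2,\theta}+P_{>N^\gamma}F_{\leq N/2,\theta}\,.
\]
The first piece carries $\nu$ derivatives in $X_{\leq N,D}$, so the projection $P_{>N^\gamma}$ gains $N^{-\gamma\nu}$; combined with~\eqref{eq:fv-gam-hl} this yields the term $T^{1/2}N^{\nu-\sigma'-\gamma\nu}\|\langle D_y\rangle^{\sigma'}F_N\|_{S_{N,D'}}$. The second piece carries $\sigma'$ derivatives in $S_{\leq N,D'}$, so the projection gains $N^{-\gamma\sigma'}$; combined with~\eqref{eq:fg-gam-hl} this yields $T^{1/2}N^{\nu-s-\gamma\sigma'}\|\langle D_y\rangle^{s}F_N\|_{X_{N,D'}}$. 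This split of the low-frequency approximate solution into its nonlinear and adapted-linear parts is the heart of the refined ansatz and cannot be replaced by an appeal to Lemma~\ref{lem:loc-Fn}.
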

Let us briefly comment the proof of the above estimate. Notice that the new terms of the approximate solution $u_N$ constructed at step $N$, which are either $F:=F_N$ or $w:=w_{n}$, appear at least once in the nonlinear forcing term (because $\mathcal{N}(u_{\frac N2})$ is removed from the nonlinearity). 

Then, the strategy to obtain optimal a priori estimates on $w_N$ is to use the $TT^\ast$ estimate from Lemma~\ref{lem:str} and the refined probabilistic estimate from Proposition~\ref{prop:srt-proba}. Specifically, when we have trilinear mixed terms (i.e some functions of type $F_N$ interact with others of type $w_N$), we place $F_N$ in $L_y^\infty$ and $w_{n}$ in $L_y^2$, respectively. Surprisingly, we do so even if $F_N$ has the highest frequency, in order to reduce the loss of derivatives from the \emph{deterministic} Strichartz estimates of Lemma
~\ref{lem:str}. This is implemented precisely in the paracontrolled trilinear estimates collected in Section~\ref{section:trilinear_proof}.

\begin{proof}
Up to permutation, we always assume that a term of type $w_N$ or $F_N$ is at the first position. We collect the different contributions depending on the nature of the terms. We place the terms $u_{N/2}$ (denoted $\phi$ in Section~\ref{section:trilinear_proof}) in the last position. Thanks to the truncation, they satisfy $\|\langle D_y\rangle^\sigma \phi\|_{S_{\leq N,D}}\lesssim 1$.

~\\
\emph{Contribution of $F_N F_N F_N$.} We have from~\eqref{eq:ff-hl}, \eqref{eq:fg}, \eqref{eq:fg-lhh} and~\eqref{eq:fg-hh} that
    \[
    \norm{\mathcal{I}\parent{\theta_{F;N}^2\mathcal{N}\parent{F_N, F_N, F_N}}}_{Y_N^\nu} \lesssim T^\frac{1}{2}N^{\nu-s-\sigma'}\norm{\japbrak{D_y}^sF_N}_{X_{N,D'}}\,.
    \]
~\\
\emph{Contribution of $w_N F_N F_N$.} We have from~\eqref{eq:wg-hl} and \eqref{eq:wg}, \eqref{eq:wg-lhh}, \eqref{eq:wg-hh} that
    \[
    \norm{\mathcal{I}\parent{\theta_{F;N}^2\mathcal{N}\parent{w_N,F_N ,F_N}}}_{Y_N^\nu} \lesssim T^\frac{1}{2}\norm{w_N}_{Y_N^\nu}\,.
    \]
~\\  
\emph{Contribution of $w_N w_N F_N$.}
We have  from~\eqref{eq:wv-hl}, \eqref{eq:wv}, \eqref{eq:wv-lhh} and~\eqref{eq:wv-hh} that
    \[
    \norm{\mathcal{I}\parent{\theta_{w;N}\theta_{F;N}\mathcal{N}(w_N, w_N, F_N)}}_{Y_N^\nu} \lesssim T^\frac{1}{2}\norm{w_N}_{Y_N^\nu}\,.
    \]
~\\
\emph{Contribution of $w_N w_N w_N$.}  We have from~\eqref{eq:wv-hl}, \eqref{eq:wv}, \eqref{eq:wv-lhh} and \eqref{eq:wv-hh} that
    \[
    \norm{\mathcal{I}\parent{\theta_{w;N}^2\mathcal{N}\parent{w_N,w_N,w_N}}}_{Y_N^\nu} \lesssim T^\frac{1}{2}\norm{w_N}_{Y_N^\nu}\,.
    \]
~\\  
\emph{Contribution of $w_N u_{\frac N2} u_{\frac N2}$.} 
We have from~\eqref{eq:wg-hl}, \eqref{eq:wg}, \eqref{eq:wg-lhh}, \eqref{eq:wg-hh} or from \eqref{eq:wv-hl}, \eqref{eq:wv}, \eqref{eq:wv-lhh}, \eqref{eq:wv-hh} depending on the nature of the first factor $u_{\frac N2}$ that
    \[
    \norm{\mathcal{I}\parent{\theta_{F,w;\leq \frac N2}\mathcal{N}\parent{w_N,u_{\frac N2},u_{\frac N2}}}}_{Y_N^\nu} \lesssim T^\frac{1}{2}\norm{w_N}_{Y_N^\nu}\,.
    \]
~\\
\emph{Contribution of $w_N w_N u_{\frac N2}$.}
We have from~\eqref{eq:wv-hl}, \eqref{eq:wg}, \eqref{eq:wg-lhh} and \eqref{eq:wg-hh} that
\[
    \norm{\mathcal{I}\parent{\theta_{w;N}\theta_{F,w;\leq \frac N2}\mathcal{N}\parent{w_N, w_N, u_{\frac N2}}}}_{Y_N^\nu} \lesssim T^\frac{1}{2}\norm{w_N}_{Y_N^\nu}\,.
    \]
~\\  
\emph{Contribution of $w_N F_N^{\omega} u_{\frac N2}$.}  We have from~\eqref{eq:wg-hl}, \eqref{eq:wg}, \eqref{eq:wg-lhh} and \eqref{eq:wg-hh} that 
    \[
    \norm{\mathcal{I}\parent{\theta_{F;N}\theta_{F,w;\leq \frac N2}\mathcal{N}\parent{w_N,F_N,u_{\frac N2}}}}_{Y_N^\nu} \lesssim T^\frac{1}{2}\norm{w_N}_{Y_N^\nu}\,.
    \]  
~\\
\emph{Contribution of $F_N F_N^{\omega} u_{\frac N2}$.} We have from~\eqref{eq:ff-hl}, \eqref{eq:fg}, \eqref{eq:fg-lhh} and~\eqref{eq:fg-hh} that
    \[
    \norm{\mathcal{I}\parent{\theta_{F;N}\theta_{F,w;\leq \frac N2}\mathcal{N}\parent{F_N, F_N ,u_{\frac N2}}}}_{Y_N^\nu} \lesssim T^\frac{1}{2}N^{\nu-s-\sigma'}\norm{\japbrak{D_y}^sF_N}_{X_{N,D'}}\,.
    \]
~\\
\emph{Contribution of $F_N u_{ n-1}u_{\frac N2}$}. It follows from the definition of the adapted linear evolution $F_N$ that these terms only contribute to the equation satisfied by $w_N$ when one of the two terms $u_{N/2}$ is truncated at frequencies $>N^\gamma$. We split this term into
    \[
    P_{>N^\gamma}u_{\frac N2} = P_{>N^\gamma} w_{\leq \frac N2} + P_{>N^\gamma} F_{\leq \frac N2}\,.
    \]
There holds
    \[
    \norm{\mathcal{I}\parent{\theta_{F,w;\leq \frac N2}^2\mathcal{N}\parent{F_N,P_{>N^\gamma}w_{\leq \frac N2},u_{\frac N2}}}}_{Y_N^\nu}\lesssim T^\frac{1}{2}N^{\nu-\sigma'-\gamma\nu}\norm{\japbrak{D_y}^{\sigma'}F_N}_{S_{N,D'}}\,.
    \]
    Indeed, we use~\eqref{eq:fv-gam-hl} for the \emph{high-low-low}-type interactions. For the other interactions, we use~\eqref{eq:fg}, \eqref{eq:fg-lhh}, \eqref{eq:fg-hh} when $u_{\frac N2}$ is of type $G$, and~\eqref{eq:fv}, \eqref{eq:fv-lhh}, \eqref{eq:fv-hh} when $u_{\frac N2}$ is of type $v$, respectively.
    Similarly, we have
    \[
    \norm{\mathcal{I}\parent{\theta_{F,w;\leq \frac N2}^2\mathcal{N}\parent{F_N,P_{>N^\gamma}F_{\leq \frac N2}, u_{\frac N2}}}}_{Y_N^\nu}\lesssim T^\frac{1}{2}N^{\nu-s-\gamma\sigma'}\norm{\japbrak{D_y}^sF_N}_{X_{N,D'}}\,,
    \]
    which follows from~\eqref{eq:fg-gam-hl} for the \emph{high-low-low} type interactions, and ~\eqref{eq:fg}, \eqref{eq:fg-lhh}, \eqref{eq:fg-hh}, or \eqref{eq:fv}, \eqref{eq:fv-lhh}, ~\eqref{eq:fv-hh} depending on the nature of $u_{\frac N2}$.
\end{proof}

\subsection{Contraction mapping argument}

As a consequence of the a priori estimates~\eqref{eq:a-priori}, we see that $\Gamma$ (defined in~\eqref{eq:gamma}) maps $Y_N^\nu$ into $Y_N^\nu$. We now prove that it is a contraction mapping in the Banach space $Y_N^\nu$ i.e. that for all $z,w\in Y_N^\nu[-T,T]$ when $T\leq T_0$,
\[
\norm{\Gamma(z)-\Gamma(w)}_{Y_N^\nu}\leq \frac{1}{2}\norm{z-w}_{Y_N^\nu}\,.
\]
To achieve this goal, we adapt the proof in~\cite{bringmann2021-ansatz}, inspired by~\cite{DeBouardDebussche99}, to the trilinear case . Since the linear terms are handled as above, it only remains to consider the quadratic and cubic terms. Let 
\[
t_z := \sup\set{0\leq t\leq T\;\colon  \|z\|_{Y_N^{\nu}[-t,t]}
	\leq 2}\,.
\]
The time $t_{w}$ is defined similarly. We observe from a continuity argument (both on the norm and on $z,w$) that for all $t> t_z$ (resp. $t> t_{w}$), we have $\theta_{z;N}(t)=0$ (resp. $\theta_{w;N}(t)=0$). Without loss of generality, we assume that $t_z\leq t_{w}$ and we encapsulate the nonlinear terms under consideration in $\Lambda$:
\begin{equation*}
    \begin{split}
    \Lambda(z,w)(t)&= \Lambda_1(z,w)(t) + \Lambda_2(z,w)(t) + \Lambda_3(z,w)(t)\,,\\
      \Lambda_1(z,w)(t) &=\theta_{z;N}(t)^2\abs{z}^2z(t)-\theta_{w;N}(t)^2\abs{w}^2w(t) \,,\\
\Lambda_2(z,w)(t)  &=\theta_{F;N}(t)\parentbig{\theta_{z;N}(t)\mathcal{N}(z,z,F_N)(t)-\theta_{w;N}(t)\mathcal{N}(w,w,F_N)(t)}\,, \\
\Lambda_3(z,w)(t)&=\theta_{F,w;\leq \frac N2}(t)\parentbig{\theta_{z;N}(t)\mathcal{N}(z,z,u_{\frac N2})(t)-\theta_{w;N}(t)\mathcal{N}(w,w,u_{\frac N2})(t)}\,.
    \end{split}
\end{equation*}
Let us consider the cubic term $\Lambda_1(z,w)$. We decompose the nonlinear interactions as in~\cite{DeBouardDebussche99}: 
\begin{equation*}
    \begin{split}
    \normbig{\int_0^t\e^{i(t-\tau)\A}\Lambda_1(\tau)\dd\tau}_{Y_N^\nu} 
    &\leq \normbig{\int_0^t\e^{i(t-\tau)\A}\mathbf{1}_{\intervalcc{0}{t_z}}(\tau)\parent{\theta_{z;N}(\tau)-\theta_{w;N}(\tau)}\abs{z}^2z(\tau)\dd\tau}_{Y_N^\nu} \\
    &+ \normbig{\int_0^t\e^{i(t-\tau)\A}\mathbf{1}_{\intervalcc{0}{t_z}}(\tau)\theta_{w;N}(\tau)\parent{\abs{z}^2z(\tau)-\abs{w}^2w(\tau)}\dd\tau}_{Y_N^\nu} \\
    &+ \normbig{\int_0^t\e^{i(t-\tau)\A}\mathbf{1}_{\intervaloc{t_z}{t_{w}}}(\tau)\parent{\theta_{z;N}(\tau)-\theta_{w;N}(\tau)}\abs{w}^2w(\tau)\dd\tau}_{Y_N^\nu}\,.
    \end{split}
\end{equation*}
Here, we used that $\theta_{z;N}(\tau)=0$ when $\tau\in\intervaloc{t_z}{t_{w}}$. Then, we deduce from the multilinear estimates ~\eqref{eq:wv-hl}, \eqref{eq:wv}, \eqref{eq:wv-lhh} and \eqref{eq:wv-hh} that
\begin{equation*}
    \begin{split}
    \normbig{\int_0^t\e^{i(t-\tau)\A}\Lambda_1(\tau)\dd\tau}_{Y_N^\nu} 
    & \lesssim  T^\frac{1}{2}\norm{\theta_{z;N}-\theta_{w;N}}_{L^\infty_t}\norm{\mathbf{1}_{\intervalcc{0}{t_z}}z}_{Y_N^\nu}^3 \\
    &+T^\frac{1}{2}\parent{\norm{\mathbf{1}_{\intervalcc{0}{t_z}}z}_{Y_N^\nu}^2+\norm{\mathbf{1}_{\intervalcc{0}{t_z}}w}_{Y_N^\nu}^2}\norm{z-w}_{Y_N^\nu}\\
    &+T^\frac{1}{2}\norm{\theta_{z;N}-\theta_{w;N}}_{L^\infty_t}\norm{\mathbf{1}_{\intervalcc{t_z}{t_{w}}}w}_{Y_N^\nu}^3\\
    &\lesssim T^\frac{1}{2}\norm{z-w}_{Y_N^\nu}\,.
    \end{split}
\end{equation*}
The quadratic interactions are controlled analogously. Hence, $\Gamma$ is a contraction mapping on $Y_N^\nu$ for sufficiently small $T_0$, and this concludes the proof of Proposition~\ref{prop:w}. 

\section{Conclusion}\label{sec:conclusion}
We now show that the sequence $(u_N)_N$ tends to a limit $u$ on some time interval $[-T_0,T_0]$, that $u$ is solution to equation~\eqref{eq:NLSHW} on a random time interval $[-T,T]$, and therefore finish the proof of Theorem~\ref{theo:main}.
We refer to~\cite{bringmann2021-ansatz}, Appendix A regarding the measurability properties of the functions $F_N$ and $w_N$.

\subsection{Optimizing the constraints}

We collect the main constraints resulting from the nonlinear analysis.
\begin{itemize}
    \item Constraint coming from the adapted linear evolution $F_N$ in Strichartz type spaces $S_{N,D'}$:
\begin{equation*}
    \sigma' + \frac{\gamma}{2} - s -\gamma\sigma <0\,.
\end{equation*}
    \item Contribution of term $F_N (P_{> N^\gamma}F_{\leq \frac N2}) u_{\frac N2}$ in $Y_N^\nu$: 
    \begin{equation*}
        \nu-s-\gamma\sigma'<0\,.
    \end{equation*}
    \item Contribution of term $F_N (P_{> N^\gamma}w_{\leq \frac N2}) u_{\frac N2}$ in $Y_N^\nu$:
    \begin{equation*}
 \nu-\sigma'-\gamma\sigma<0\,.
    \end{equation*}
\end{itemize}
This reduces to the minimization problem for $s$ under the constraints  
\begin{equation*}
\begin{cases}
    \sigma'-s-\gamma\sigma + \frac{\gamma}{2}<0\\
    \nu - \gamma\sigma' -s <0\\
    \nu - \sigma' - \gamma\nu <0\\
    \sigma - \nu <-\frac{1}{2}\\
    \sigma-\sigma' < 0\,.
\end{cases}
\end{equation*}
Discretizing $\gamma\in\intervaloo{0}{1}$ and using a linear programming solver leads to an approximate optimal solution 
\begin{equation}
    \label{eq:opti}
    (s_0,\sigma'_0,\sigma_0,\nu_0) = (0.464131, 0.154581, 0.077290, 0.577291) \,,\quad \gamma =  0.732232\,.
\end{equation}
We chose $s>s_0$, $\sigma=\sigma_0-$, $\sigma'=\sigma'_0-$ and $\nu=\nu_0-$, and we now establish Theorem~\ref{theo:main}.

\subsection{Convergence of \texorpdfstring{$(u_N)_{N\in2^\N}$}{un}}

First, we prove the convergence~\eqref{eq:convergence} in expectation along the dyadic subsequence $(u_N)_{N\in2^\N}$ up to the time $T_0>0$ which is chosen so that Proposition~\ref{prop:w} holds true, and yields the a priori estimate on the truncated solution. 
In what follows $T$ is fixed, with $0<T\leq T_0$, and the norms are taken on the slab $\Omega\times\intervalcc{-T}{T}\times\R^2$. We recall from~\eqref{eq:un_series} that for some $N_0$ sufficiently large, the approximate solution reads
\[
u_N = u_{N_0} + \sum_{M=N_0}^N \left(w_M + F_M\right)\,,,
\]
We first prove the convergence of the series $\sum F_M$ and $\sum {w_M}$ in the space $L^2_\omega(\Omega ; X_T^{s,\sigma})$, where the Sobolev-Strichartz space $X_{T}^{s,\sigma}$ is defined in~\eqref{eq:XT0}. Thanks to a completeness argument, it is enough to prove that the partial sums form a Cauchy sequence. Let $0\leq N_-\leq N_+<\infty$. We first address the adapted linear evolution, controlled in $L^2_\omega(\Omega ;\mathcal{C}_t ([-T_0,T_0] ; L^2_xH^s_y))$:
\begin{multline*}
\normbig{\sum_{M=N_-}^{N_+}\japbrak{D_y}^sF_M}_{L^2_\omega L^{\infty}_t L^2_{x,y}}
	\lesssim\normbig{\sum_{M=N_-}^{N_+}\japbrak{D_y}^sP_NF_M}_{L^2_\omega L^{\infty}_t \ell^2_N L^2_{x,y}}
	\lesssim \normbig{\sum_{M=N_-}^{N_+}\japbrak{D_y}^sP_NF_M}_{L^2_\omega \ell^2_N L^{\infty}_t  L^2_{x,y}}\,.
\end{multline*}
From the definition~\eqref{eq:cND} of the spaces $X_{M,D'}$, we have
\begin{equation*}
\normbig{\sum_{m=N_-}^{N_+}\japbrak{D_y}^sF_M}_{L^2_\omega L^{\infty}_t L^2_{x,y}}
	\lesssim \normbig{\sum_{m=N_-}^{N_+}\max\left(\frac NM,\frac MN\right)^{-D'}\|\japbrak{D_y}^sF_M\|_{X_{M,D'}}}_{L^2_\omega \ell^2_N}\,.
\end{equation*}
Applying Cauchy-Schwarz's inequality, and then Lemma~\ref{lem:loc-Fn}, the right-hand-side of the above inequality is bounded by
\[
{\rm r.h.s}\
	\lesssim \normbig{\sum_{M=N_-}^{N_+}\max\left(\frac NM,\frac MN\right)^{-D'} \|\widetilde{P_M}f_0^{\omega}\|_{L^2_xH^s_y}}_{L^2_\omega \ell^2_N}
	\lesssim \left(\sum_{M=N_-}^{N_+} \|\widetilde{P_M}f_0\|_{L^2_xH^s_y} ^2\right)^{\frac 12}\,.
\]
Similarly, convergence in $L^2_\omega( \Omega ; L_t^8([-T_0 ; T_0] ; L_x^4W_y^{\sigma,\infty}))$ follows from the probabilistic Strichartz estimate in Proposition~\ref{prop:srt-proba}
\[
\norm{\japbrak{D_y}^{\sigma}F_M}_{L^2_\omega L_t^8L_x^4L_y^{\infty}}
	\lesssim  T^{\frac{1}{2}}M^{\sigma+\frac{\gamma}{2}-s-\gamma\sigma+2\delta}\norm{P_M f_0}_{\mathcal{H}^s}\,,
\]
where $\sigma+\frac{\gamma}{2}-s-\gamma\sigma+2\delta<0$ when $\rho$ is chosen close enough to $\sigma$ since $\sigma<\sigma'$. To establish the convergence of the nonlinear remainders $(w_N)_N$, we use the a priori estimate from Proposition~\ref{prop:w}. Namely,
\[
\norm{w_M}_{Y_M^\nu}
	\lesssim T^\frac{1}{2}M^{\nu-\sigma'-\gamma\nu}\norm{\japbrak{D_y}^{\sigma'}F_M}_{S_{M,D'}}+T^\frac{1}{2}M^{\nu-s-\gamma\sigma'}\norm{\japbrak{D_y}^sF_M}_{X_{M,D'}}\,,
\]
where $\nu-\sigma'-\gamma\nu<0$ and $\nu-s-\gamma\sigma'<0$. Using again Lemma~\ref{lem:loc-Fn} and Proposition~\ref{prop:srt-proba}, we get that for a very small parameter $\varepsilon>0$, 
\[
\norm{\japbrak{D_y}^\sigma w_M}_{L_\omega^2 L^8_tL_x^4L^\infty_y}+\norm{\japbrak{D_y}^\nu w_M}_{L_\omega^2 L^{\infty}_tL^{2}_{x,y}}
	\lesssim \norm{w_N}_{Y_M^\nu}\lesssim T^{\frac 12}M^{-\varepsilon}\|\widetilde{P_M}f_0\|_{L^2_xH^s_y}\,,
\]
so that the sum over $M$ converges.
\subsection{The limit is a strong solution}
We now show that almost-surely in $\omega$ there exists $0<T^\omega\leq T_0$ such that the limit $u$ satisfies the Duhamel formula for equation~\eqref{eq:NLSHW} on $[-T^\omega,T^\omega]$. To achieve such a goal, we prove that the truncation functions $\theta_{F,w;\leq \frac N2}$, $\theta_{F;N}$ and $\theta_{w;N}$ are actually equal to $1$ at least until a time $T^\omega>0$ almost-surely. First, we observe that the map
\[
t\in[-T_0,T_0]\mapsto \sum_{M\geq N_0}\|\japbrak{D_y}^{\sigma'}F_M\|_{S_{M,D'}([-t,t])}
\]
is almost-surely finite thanks to Proposition~\ref{prop:srt-proba}, in which case it is continuous. Moreover, it is equal to zero at $t=0$. Therefore, the random time $T_1(\omega)$, defined by
\[
T_1(\omega)=\sup\Bigg\{t\in[-T_0,T_0] \;\colon \sum_{M\geq 0}\|\japbrak{D_y}^{\sigma'}F_M\|_{S_{M,D'}([-t,t])}\leq \frac 12 \Bigg\}\,,
\]
is almost-surely positive. Then we deduce from Proposition~\ref{prop:w} that for a very small parameter $\varepsilon>0$, we have
\[
\norm{ w_M}_{Y_M^\nu([-t,t])}
	\lesssim T^\frac{1}{2}M^{-\varepsilon}\parent{\norm{\japbrak{D_y}^{\sigma'}F_M}_{S_{M,D'}([-t,t])}+\norm{\japbrak{D_y}^sF_M}_{X_{M,D'}([-t,t])}}\,,
\]
the series being almost surely finite at least until the time $T_0$ thanks to Lemma~\ref{lem:loc-Fn} and Proposition~\ref{prop:srt-proba}, and it is equal to zero at $t=0$. Therefore, the random time $T^\omega$ defined by
\[
T^\omega=\sup\Bigg\{t\in[-T_1(\omega),T_1(\omega)] \;\colon \sum_{M\geq N_0}\|w_M\|_{Y_M^{\nu}([-t,t])}\leq \frac 12 \Bigg\}\,
\]
 is almost-surely nonnegative. Therefore, on the time interval $[-T^\omega,T^\omega]$ one has $\theta_{F,w;\leq \frac N2}(t)=\theta_{F;N}(t)=\theta_{w;N}(t)=1$, so that for every $N$, $u_N$ is an actual solution to~\eqref{eq:NLSHW} with initial data $P_{\leq N}f_0^{\omega}$. We conclude by passing to the limit $N\to\infty$ in the Duhamel formula. 

\subsection{The \texorpdfstring{$H_x^{2s}L_y^2$ part}{x}}

We prove that for each $N\in2^\N$ the sequence $(u_N)$, a priori constructed in $\mathcal{C}(\intervalcc{-T_0}{T_0};L_x^2H_y^s)$, actually converges to $u$ in $\mathcal{C}(\intervalcc{-T_0}{T_0};\mathcal{H}^s)$. We simply use the $TT^*$-estimate to have a priori estimates of the $H_x^{2s}L_y^2$-norm of $u_N$:
\begin{equation*}
    \begin{split}
    \norm{u_N}_{L^\infty_t H_x^{2s}L_y^2} &\lesssim \norm{P_Nf_0^\omega}_{H_x^{2s}L_y^2} + \norm{\japbrak{D_x}^{2s}\abs{u_N}^2u_N}_{L_t^\frac{4}{3}(\intervalcc{-T_0}{T_0}; L_x^1L_y^2)} \\
&\lesssim \norm{P_Nf_0^\omega}_{H_x^{2s}L_y^2}+T_0^\frac{1}{2}\norm{u_N}_{L^\infty_t H_x^{2s}L_y^2}\norm{u_N}_{L_t^8L_x^4L_y^\infty}^2 \\
&\leq C\norm{P_Nf_0^\omega}_{H_x^{2s}L_y^2} + \frac{1}{2}\norm{u_N}_{L^\infty_t H_x^{2s}L_y^2}\,,
    \end{split}
\end{equation*}
for $T_0$ sufficiently small, and where we used the truncation argument of De Bouard-Debussche to control the $L_t^8L_x^4L_y^\infty$-norm of $u_N = u_{\frac N2} + F_N + w_N$. The rest of the argument follows as above. 

\subsection{Limit of the whole sequence} Once the sequence $(u_N)_{N\in2^\N}$ is constructed and its convergence established, we define the general approximating sequence $(u_n)_{n\in\N}$ by iteration: given $n\in\N$ and $N$ such that $\frac N2< n\leq N$, we set
\[
u_n = u_\frac{N}{2} + F_n + w_n\,,
\]
where the adapted linear evolution $F_n$ solves
\begin{equation*}
i\partial_t F_n + \mathcal{A}F_n = \mathcal{N}(F_n,P_{\leq N^\gamma}u_\frac{N}{2},P_{\leq N^\gamma}u_\frac{N}{2})\,,
\end{equation*}
with initial data
\[
F_n(0) = P_{\leq n}f_0^\omega - P_{\leq \frac{N}{2}}f_0^\omega = \sum_{\frac N2<\abs{k}\leq N}g_k(\omega)P_{1;k}f_0\,.
\]

Note that $(u_N)_N$ is a subsequence of $(u_n)_n$. The truncated versions $F_{n,\theta}$ and $w_{n,\theta}$ are defined as in~\eqref{eq:trun-Fn} and~\eqref{eq:wtheta}, respectively. For fixed $n\in\N$ and $N\in2^\N$ such that $\frac N2 < n \leq N$, we can prove the same estimates for $F_{n,\theta}$ and $w_{n,\theta}$ that we get for $F_{N,\theta}$ (in Section~\ref{sec:linear_evolution}) and $w_{N,\theta}$ (in Section~\ref{sec:nonlinear_evolution}). We deduce from these estimates and from the convergence of $(u_{N,\theta})_{N\in2^\N}$ that $(u_{n,\theta})_{n\in \N}$ is a Cauchy sequence. Finally we conclude that the whole sequence $(u_n)_n$ is convergent to the limit $u$ of $(u_N)_N$ in $X_{T_0}^{s,\sigma}$, moreover it is a solution to equation~\eqref{eq:NLSHW} with initial data $P_{\leq n}f_0^{\omega}$ on $[-T^{\omega},T^{\omega}]$ by construction.

\section{Paracontrolled trilinear estimates}\label{section:trilinear_proof}

In order to establish the trilinear estimates, we will  perform a Littlewood-Payley decomposition. After we cut in frequencies, we use in various situations the following decomposition.

\begin{lemma}\label{lem:PM}
For every $\phi^{(1)},\phi^{(2)},\phi^{(3)}$ such that the upper bound is finite, we have
\begin{multline*}
    \norm{\mathcal{I}\parent{ \mathcal{N} \parent{\phi^{(1)},\phi^{(2)},\phi^{(3)}}}}_{Y_N^\nu}\\ 
    \lesssim\sum_M \parent{C_{\leq N,D}(M)+C_{N,\alpha}(M)}M^{\nu}\norm{P_M\mathcal{N}\parent{\phi^{(1)},\phi^{(2)},\phi^{(3)}}}_{L_t^{\frac{4}{3}}L_x^1L_y^2}\,.
\end{multline*}
\end{lemma}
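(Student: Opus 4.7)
The plan is to unwind the four constituents of $\norm{\cdot}_{Y_N^\nu}$ and treat each one by a single application of the $TT^*$ Strichartz estimate from Lemma~\ref{lem:str}, combined (for the mixed norms involving $L_y^\infty$) with a Bernstein inequality in $y$. Since $e^{it\mathcal{A}}$ commutes with the $y$-frequency projectors $P_M$, one has $P_M \mathcal{I}(f) = \mathcal{I}(P_M f)$, so it suffices to estimate each dyadic piece separately and then sum against the prescribed weights.

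For the two $X$-type pieces, i.e.\ $\norm{\japbrak{D_y}^\nu \cdot}_{X_{N,\alpha}}$ and $\norm{\japbrak{D_y}^\nu \cdot}_{X_{\leq N,D}}$, I would apply the $TT^*$-estimate~\eqref{eq:TTstar} with output admissible pair $(p,q)=(\infty,2)$, dual admissible pair $(\widetilde p,\widetilde q)=(4,\infty)$, and $r=2$ on both sides. Since $r=2$ there is no Bernstein loss and no restriction on $|E|$, giving
\[
\norm{P_M\mathcal{I}(\mathcal{N})}_{L_t^\infty L_{x,y}^2}\lesssim \norm{P_M\mathcal{N}}_{L_t^{4/3}L_x^1 L_y^2}.
\]
Multiplying by $M^\nu$ (Littlewood-Paley) to absorb $\japbrak{D_y}^\nu$ on the frequency-$M$ output, and then by the weight $C_{N,\alpha}(M)$ or $C_{\leq N,D}(M)$ and summing over $M$, produces the contribution to the right-hand side of the lemma.

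For the two $S$-type pieces, i.e.\ $\norm{\japbrak{D_y}^\sigma \cdot}_{S_{N,\alpha}}$ and $\norm{\japbrak{D_y}^\sigma \cdot}_{S_{\leq N,D}}$, I would apply $TT^*$ with output pair $(p,q)=(8,4)$ and the same dual pair $(\widetilde p,\widetilde q)=(4,\infty)$, still keeping $r=2$ on the input side, in order to land in $L_t^{4/3}L_x^1L_y^2$ on the right. To reach the $L_y^\infty$ norm on the left I use Bernstein at $y$-frequency $M$, $\norm{P_M f}_{L_y^\infty}\lesssim M^{1/2}\norm{P_M f}_{L_y^2}$, so that $\norm{\japbrak{D_y}^\sigma P_M\mathcal{I}(\mathcal{N})}_{L_t^8L_x^4L_y^\infty}\lesssim M^{\sigma+\frac12}\norm{P_M\mathcal{I}(\mathcal{N})}_{L_t^8L_x^4L_y^2}$. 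The assumption $\sigma<\nu-\frac12$ from the definition of $Y_N^\nu$ ensures $M^{\sigma+\frac12}\lesssim M^\nu$, after which the $TT^*$ estimate yields the same bound $M^\nu \norm{P_M\mathcal{N}}_{L_t^{4/3}L_x^1L_y^2}$ as above. Summing against the weights and adding the four contributions gives the claim.

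There is no real obstacle here; the statement is essentially a bookkeeping lemma packaging the Strichartz estimate~\eqref{eq:TTstar} into a form uniform across the four components of $Y_N^\nu$. The only mildly delicate point is choosing admissible pairs so that the $y$-exponent is $r=2$ on both sides (to avoid a derivative loss that would be incompatible with a clean $M^\nu$ factor) while still producing the dual norm $L_t^{4/3}L_x^1L_y^2$; the pair $(4,\infty)$ in space-time on the dual side is the unique choice making this work. Note that this is why the lemma is stated in a form adapted precisely to control nonlinear terms via their $L_t^{4/3}L_x^1L_y^2$ norm, as used systematically in the trilinear estimates of Section~\ref{section:trilinear_proof}.
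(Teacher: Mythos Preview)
Your proposal is correct and follows essentially the same route as the paper. The only cosmetic difference is in the handling of the $S$-type pieces: the paper applies the $TT^*$ estimate~\eqref{eq:TTstar} directly with $r=\infty$ (so that the Bernstein loss $|E|^{1/2}\sim M^{1/2}$ is built into the estimate), whereas you take $r=2$ and then apply Bernstein in $y$ as a separate step; both yield the identical factor $M^{\sigma+1/2}\lesssim M^\nu$ via the constraint $\sigma<\nu-\tfrac12$.
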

\begin{proof}[Proof of Lemma~\ref{lem:PM}]
This follows from the definition of the norms in Section~\ref{sec:scheme}. To control the Strichartz-norms we use the $TT^*$-Bernstein-Strichartz estimate from Lemma~\ref{lem:str} with $(\tilde p, \tilde q) = (4,\infty)$, and either  $(p,q)=(\infty,2)$ and $r=2$ or $(p,q)=(8,4)$ and $r=\infty$ respectively. We have the $y$-frequency localization on an interval $E=\set{\frac{M}{2}\leq \abs{\eta}\leq 2M}$ of length $\abs{E}\lesssim M$. Moreover, we use that $\nu=\sigma+\frac{1}{2}+0$ in the second case.
\end{proof}

We later use this Lemma for some given functions $\phi^{(1)},\phi^{(2)},\phi^{(3)}$ of type $F,G,w,v$, where
\[
F = F_N\,,\quad G = F_N\ \text{or}\ F_{\leq \frac N2}\,,\quad w = w_N\,,\quad v = w_N\ \text{or}\ w_{\leq \frac N2}\,.
\]
Next, we perform a Littlewood-Paley decomposition of each function, and we conduct a case-by-case analysis of each paracontrolled term defined below, depending on the nature of the functions $(\phi^{(i)})_{1\leq i\leq3}$ occurring in the equation~\eqref{eq:wtheta} satisfied by $w_{n }$. 
\begin{align*}
\Pi_{\hi,\lo,\lo}(\phi^{(1)},\phi^{(2)},\phi^{(3)}) &= \sum_{N_2,N_3\ll N_1}\mathcal{N}(P_{N_1}\phi^{(1)},P_{N_2}\phi^{(2)},P_{N_3}\phi^{(3)})\,,\\
\Pi_{\lo,\hi,\lo}(\phi^{(1)},\phi^{(2)},\phi^{(3)}) &= \sum_{N_1,N_3\ll N_2}\mathcal{N}(P_{N_1}\phi^{(1)},P_{N_2}\phi^{(2)},P_{N_3}\phi^{(3)})\,,\\
\Pi_{\lo,\hi,\hi}(\phi^{(1)},\phi^{(2)},\phi^{(3)}) &= \sum_{N_1\ll N_2\sim N_3 }\mathcal{N}(P_{N_1}\phi^{(1)},P_{N_2}\phi^{(2)},P_{N_3}\phi^{(3)})\,,\\
\Pi_{\hi,\hi}(\phi^{(1)},\phi^{(2)},\phi^{(3)}) &= \sum_{N_3\lesssim N_1\sim N_2}\mathcal{N}(P_{N_1}\phi^{(1)},P_{N_2}\phi^{(2)},P_{N_3}\phi^{(3)})\,.
\end{align*}

To estimate the norm $L_t^\frac{4}{3}L^1_xL^2_y$ of the Duhamel term, we use the Hölder inequality, putting one term in $L_t^\infty L_{x,y}^2$, the other two in $L_t^8L_x^4L_y^\infty$, and gaining a factor $T^\frac{1}{2}$. Recall that any small loss of derivative in $y$ is harmless, since our goal is to prove a result for some $s>s_0$. We multiply by $N^{0^+}$ to account for such a $0^+$-loss. At the end the loss are collected in the factor $N^\delta$, where $\delta>\sigma-\rho>0$ is arbitrarily small.

\begin{prop}[Trilinear estimates with \emph{high-low-low} type interactions]
\label{prop:hll}
Let $\phi$ be of type $v$ or $G$, and assume that $\max(\nu-\sigma',\sigma)<\alpha<\nu$,  $D>\alpha$ and $2D+s+\sigma'<D'$. Assuming that $\norm{\japbrak{D_y}^\sigma\phi}_{S_{\leq N,D}}\lesssim 1$, we have
\begin{align}
    \label{eq:fv-gam-hl}
    \norm{\mathcal{I}\parent{\Pi_{\hi,\lo,\lo}  (F,P_{>N^\gamma}v,\phi)}}_{Y_N^\nu}&\lesssim T^\frac{1}{2}N^{\nu-\sigma'-\gamma\nu+\delta}\norm{\japbrak{D_y}^{\sigma'}F}_{S_{N,D'}}\norm{\japbrak{D_y}^\nu v}_{X_{\leq N,D}}
    \,,\\
    \label{eq:fg-gam-hl}
    \norm{\mathcal{I}\parent{\Pi_{\hi,\lo,\lo}  (F,P_{> N^\gamma}G,\phi)}}_{Y_N^\nu}&\lesssim T^\frac{1}{2}N^{\nu-s-\gamma\sigma'}\norm{\japbrak{D_y}^sF}_{X_{N,D'}}\norm{\japbrak{D_y}^{\sigma'} G}_{S_{\leq N,D'}}
    \,, \\
    \label{eq:ff-hl}
    \norm{\mathcal{I}\parent{\Pi_{\hi,\lo,\lo}  (F,F,\phi)}}_{Y_N^\nu}&\lesssim T^\frac{1}{2}N^{\nu-s-\sigma'}\norm{\japbrak{D_y}^sF}_{X_{N,D'}}\norm{\japbrak{D_y}^{\sigma'} F}_{S_{ N,D'}}
    \,, \\
    \label{eq:wg-hl}
    \norm{\mathcal{I}\parent{\Pi_{\hi,\lo,\lo}  (w,G,\phi)}}_{Y_N^\nu}&\lesssim T^\frac{1}{2}\norm{\japbrak{D_y}^\nu w}_{X_{N,\alpha}\cap X_{\leq N,D}}\norm{\japbrak{D_y}^{\sigma'} G}_{S_{\leq N,D'}}
    \,, \\
    \label{eq:wv-hl}
    \norm{\mathcal{I}\parent{\Pi_{\hi,\lo,\lo}  (w,v,\phi)}}_{Y_N^\nu}&\lesssim T^\frac{1}{2}\norm{\japbrak{D_y}^\nu w}_{X_{N,\alpha}\cap X_{\leq N,D}}\norm{\japbrak{D_y}^\sigma v}_{S_{\leq N,D}}
    \,.
\end{align}
\end{prop}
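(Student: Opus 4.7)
The plan is to apply Lemma~\ref{lem:PM} to reduce the $Y_N^\nu$-norm to the weighted dyadic sum
\[
\sum_M \parent{C_{\leq N, D}(M) + C_{N, \alpha}(M)} M^\nu \norm{P_M \Pi_{\hi,\lo,\lo}\parent{\phi^{(1)}, \phi^{(2)}, \phi^{(3)}}}_{L_t^{4/3} L_x^1 L_y^2},
\]
and then to Littlewood-Paley decompose each factor $\phi^{(i)} = \sum_{N_i} P_{N_i} \phi^{(i)}$. The definition of $\Pi_{\hi,\lo,\lo}$ enforces $N_2, N_3 \ll N_1$, and the output frequency localization in $P_M$ then forces $M \sim N_1$. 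It remains to bound, uniformly in the dyadic triple $(N_1, N_2, N_3)$, the trilinear expression and sum in $(M, N_1, N_2, N_3)$.

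Within each block I would apply Hölder's inequality as prescribed in Section~\ref{sec:a-priori}, placing one factor in $L_t^\infty L_{x,y}^2$ and the other two in $L_t^8 L_x^4 L_y^\infty$, gaining a prefactor $T^{1/2}$. The placement is guided by the rule that the factor with the most favorable energy-type bound enters $L_t^\infty L_{x,y}^2$: for~\eqref{eq:wg-hl} and~\eqref{eq:wv-hl}, the high-frequency $w$-factor enters $L_t^\infty L_{x,y}^2$ at the full level $\nu$, yielding no $N$-loss beyond $T^{1/2}$; for~\eqref{eq:fv-gam-hl}, the low-frequency $v$ enters $L_t^\infty L_{x,y}^2$ at level $\nu$ and the high-frequency $F$ enters Strichartz at level $\sigma'$, producing the combined factor $N_1^{\nu - \sigma'} N_2^{-\nu}$ that sums to $N^{\nu - \sigma' - \gamma\nu}$ over $N_2 > N^\gamma$; for~\eqref{eq:fg-gam-hl} and~\eqref{eq:ff-hl}, the high-frequency $F$ enters $L_t^\infty L_{x,y}^2$ at level $s$ via the $X_{N, D'}$-norm, paying $N_1^{\nu - s} C_{N, D'}(N_1)^{-1}$, while the low-frequency $G$ or $F$ enters Strichartz at level $\sigma'$. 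The $P_{>N^\gamma}$ cutoff in~\eqref{eq:fg-gam-hl} yields the prefactor $N^{-\gamma \sigma'}$; in~\eqref{eq:ff-hl} the analogous prefactor $N^{-\sigma'}$ comes from summing $N_2^{-\sigma'} C_{N, D'}(N_2)^{-1}$, which concentrates near $N_2 \sim N$. The factor $\phi$ is always placed in Strichartz at level $\sigma$ and bounded by the truncation assumption $\norm{\japbrak{D_y}^\sigma \phi}_{S_{\leq N, D}} \lesssim 1$.

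The main obstacle is keeping the dyadic sum over $(M, N_1, N_2, N_3)$ convergent despite the competing weights. Since $M \sim N_1$, the output weight $(C_{\leq N, D}(M) + C_{N, \alpha}(M)) \lesssim \max(M/N, N/M)^{\max(D, \alpha)}$ must be dominated either by the input weight $C_{N, D'}(N_1)^{-1}$ acting on $F$, or by the decay of $\norm{P_{N_1} w}$ within $\norm{\japbrak{D_y}^\nu w}_{X_{N, \alpha} \cap X_{\leq N, D}}$ acting on $w$, which is precisely why the hypothesis $2D + s + \sigma' < D'$ (together with $D > \alpha$) is imposed. The low-frequency sums $\sum N_2^{-\sigma}, \sum N_3^{-\sigma}$ contribute at most a logarithmic loss, absorbed into the arbitrarily small factor $N^\delta$ (which also accommodates the small loss incurred when passing from $L_y^\infty$ to $L_y^2$ via the Bernstein embedding in $y$), made as small as desired by choosing $\rho$ close to $\sigma$.
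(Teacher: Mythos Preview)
Your proposal is correct and follows essentially the same approach as the paper's proof: reduce via Lemma~\ref{lem:PM}, Littlewood--Paley decompose each factor, apply H\"older with exactly the placements you describe (the paper details only~\eqref{eq:fv-gam-hl} and defers the remaining cases to~\cite{bringmann2021-ansatz}, Section~7, so your outline is in fact more complete). One small inaccuracy: the low-frequency sums $\sum N_i^{-\sigma}$ actually converge outright for $\sigma>0$, and there is no Bernstein step from $L_y^\infty$ to $L_y^2$ in this argument---the $N^\delta$ in~\eqref{eq:fv-gam-hl} absorbs a harmless $+0$ loss in the summation over $M$, not a logarithmic divergence.
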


Note that by homogeneity, the above inequalities are still true without the assumption  $\norm{\japbrak{D_y}^\sigma\phi}_{S_{\leq N,D}}\lesssim 1$, up to multiplying the upper bound by $\norm{\japbrak{D_y}^\sigma\phi}_{S_{\leq N,D}}$. We only make this assumption for shortness of notation.

\begin{prop}[Trilinear estimates with \emph{low-high-low} type interactions]
\label{prop:llh}
Let $\phi$ be of type $v$ or $G$, and assume that $\max(\nu-\sigma',\sigma)<\alpha<\nu$,  $D>\alpha$ and $2D+s+\sigma'<D'$. Assuming that $\norm{\japbrak{D_y}^\sigma\phi}_{S_{\leq N,D}}\lesssim 1$, we have
\begin{align} 
    \label{eq:wg}
    &\norm{\mathcal{I}\parent{\Pi_{\lo,\hi,\lo}  \parent{w,G,\phi}}}_{Y_N^\nu} \lesssim T^\frac{1}{2} \norm{\japbrak{D_y}^\nu w}_{X_{N,\alpha}}\norm{\japbrak{D_y}^{\sigma'}G}_{S_{\leq N, D'}}\,,\\
    \label{eq:wv}
    &\norm{\mathcal{I}\parent{\Pi_{\lo,\hi,\lo}  \parent{w,v,\phi}}}_{Y_N^\nu} \lesssim T^\frac{1}{2}\norm{\japbrak{D_y}^\sigma w}_{S_{N,\alpha}}\norm{\japbrak{D_y}^\nu v}_{ X_{\leq N,D}}\,,\\
    \label{eq:fg}
    &\norm{\mathcal{I}\parent{\Pi_{\lo,\hi,\lo}  \parent{F,G,\phi}}}_{Y_N^\nu} \lesssim T^\frac{1}{2} N^{\nu-\sigma'-s}\norm{\japbrak{D_y}^sF}_{X_{N,D'}}\norm{\japbrak{D_y}^{\sigma'}G}_{S_{\leq N, D'}} \,,\\
    \label{eq:fv}
    &\norm{\mathcal{I}\parent{\Pi_{\lo,\hi,\lo}  \parent{F,v,\phi}}}_{Y_N^\nu} 
    \lesssim T^\frac{1}{2}N^{-\sigma'+0}\norm{\japbrak{D_y}^{\sigma'} F}_{S_{N,D'}}\norm{\japbrak{D_y}^\nu v}_{ X_{\leq N,D}} \,.
\end{align}
\end{prop}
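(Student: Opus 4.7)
The plan is to prove all four estimates by a common scheme, with case-by-case bookkeeping. First, I would apply Lemma~\ref{lem:PM} to reduce the $Y_N^\nu$-norm of the Duhamel integral to a sum
\[
\sum_M \parent{C_{\leq N,D}(M) + C_{N,\alpha}(M)} M^\nu \norm{P_M \mathcal{N}(\phi^{(1)}, \phi^{(2)}, \phi^{(3)})}_{L_t^{4/3} L_x^1 L_y^2}\,,
\]
where, after a Littlewood-Paley decomposition of each factor in the $y$-variable, the output frequency $M$ must coincide (up to a constant) with the high frequency $N_2$, precisely because the structure is \emph{low-high-low} with $N_1, N_3 \ll N_2$. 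I would then apply Hölder's inequality in space-time, distributing one factor to $L_t^\infty L_{x,y}^2$ and two to $L_t^8 L_x^4 L_y^\infty$; this produces a factor $T^{1/2}$ from the time integration and leaves three norms already controlled by the hypotheses of the proposition.

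The allocation of factors is dictated by the right-hand side of each estimate. I would place any factor appearing with an $X$-norm in $L_t^\infty L_{x,y}^2$, and any factor appearing with an $S$-norm (together with the bounded potential $\phi$) in $L_t^8 L_x^4 L_y^\infty$. Hence for~\eqref{eq:wg} the low-frequency factor $w$ goes into $L_t^\infty L_{x,y}^2$ via $X_{N,\alpha}$; for~\eqref{eq:wv} the high-frequency factor $v$ goes into $L_t^\infty L_{x,y}^2$ via $X_{\leq N, D}$; for~\eqref{eq:fg} the low-frequency factor $F$ goes into $L_t^\infty L_{x,y}^2$ via $X_{N, D'}$ at regularity $s$; for~\eqref{eq:fv} again $v$ is placed in $L_t^\infty L_{x,y}^2$. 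The cleanest cases are~\eqref{eq:wv} and~\eqref{eq:fv}, in which the high-frequency factor at $N_2$ already carries the weight $\japbrak{D_y}^\nu$, matching the $M^\nu$ on the left-hand side; in~\eqref{eq:wg} and~\eqref{eq:fg}, the high-frequency factor $G$ only provides $N_2^{\sigma'}$, so an imbalance $N_2^{\nu - \sigma'}$ must be absorbed by the weights, and in~\eqref{eq:fg} the low-frequency factor $F$ enters with $N_1^{-s}$; together this produces the declared $N^{\nu - \sigma' - s}$ when the $N_2$-sum localizes to $N_2 \sim N$, and similarly $N^{-\sigma' + 0}$ appears in~\eqref{eq:fv} from the comparison of $\sigma'$ (in the Hölder placement) with $\nu$ (on the LHS).

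The main obstacle will be checking the summability of the resulting triple sum over $(N_1, N_2, N_3)$. The sums in $N_1$ and $N_3$ converge by the geometric decay $N_1^{-\sigma}$, $N_3^{-\sigma}$ (or $N_1^{-\nu}$ in~\eqref{eq:wg}), using $\sigma, \nu > 0$. The delicate point is the $N_2$-sum. For $N_2 \ll N$, the weight ratio $C_{N,\alpha}(N_2) / C_{\leq N, D'}(N_2) \sim (N/N_2)^\alpha$ blows up, but it is compensated by $C_{N,\alpha}(N_1)^{-1} \sim (N_1/N)^\alpha$ coming from the low-frequency factor placed in the $C_{N,\alpha}$-weighted space; the product $(N_1/N_2)^\alpha$ is small since $N_1 \ll N_2$. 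Summability then requires $\alpha > \nu - \sigma'$ in~\eqref{eq:wg} and~\eqref{eq:fg}, and $\alpha > \sigma$ in~\eqref{eq:wv} and~\eqref{eq:fv}, which are precisely the hypotheses $\max(\nu - \sigma', \sigma) < \alpha$. For $N_2 \gg N$, the conditions $D, D' > \alpha$ furnish the rapid decay $(N_2/N)^{\alpha - D'}$ needed for convergence. The dominant contribution comes from $N_2 \sim N$, producing the explicit constants in~\eqref{eq:wg} and~\eqref{eq:wv} and the announced powers of $N$ in~\eqref{eq:fg} and~\eqref{eq:fv}. A small Sobolev-type loss $N^{0+}$ arising from the frequency-localized embedding $L^2_y \hookrightarrow L^\infty_y$ inside the Bernstein step is harmless and absorbed in $N^\delta$ as in Proposition~\ref{prop:hll}.
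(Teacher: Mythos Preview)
Your proposal is correct and follows the same approach as the paper's proof: the reduction via Lemma~\ref{lem:PM}, the H\"older allocation (placing the $X$-controlled factor in $L_t^\infty L_{x,y}^2$ and the $S$-controlled factors in $L_t^8 L_x^4 L_y^\infty$, exactly as you list case by case), and the summability check exploiting $\max(\nu-\sigma',\sigma) < \alpha < \nu$ together with $D' \gg D$. The only minor inaccuracy is that in~\eqref{eq:wg} and~\eqref{eq:wv} the $M$-sum is not dominated by $M \sim N$ but converges outright (the summand behaves like $M^{\nu-\sigma'-\alpha}$ for $M \lesssim N$, which is where $\alpha > \nu-\sigma'$ is used), whereas in~\eqref{eq:fg} and~\eqref{eq:fv} the strong $D'$-weight carried by $F$ does force the localization $M \sim N$ as you say.
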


\begin{prop}[Trilinear estimates with \emph{low-high-high} type interactions]
\label{prop:lhh}
Let $\phi$ be of type $v$ or $G$, and assume that $\max(\nu-\sigma',\sigma)<\alpha<\nu$ and $2D+s+\sigma'<D'$. Assuming that $\norm{\japbrak{D_y}^\sigma\phi}_{S_{\leq N,D}}\lesssim 1$, we have
\begin{align} 
    \label{eq:wg-lhh}
    &\norm{\mathcal{I}\parent{\Pi_{\lo,\hi,\hi}  \parent{w,G,\phi}}}_{Y_N^\nu}
     \lesssim T^\frac{1}{2} \norm{\japbrak{D_y}^\nu w}_{X_{N,\alpha}}\norm{\japbrak{D_y}^{\sigma'}G}_{S_{\leq N, D'}}\,,\\
    \label{eq:wv-lhh}
    &\norm{\mathcal{I}\parent{\Pi_{\lo,\hi,\hi}  \parent{w,v,\phi}}}_{Y_N^\nu}
     \lesssim T^\frac{1}{2}\norm{\japbrak{D_y}^\sigma w}_{S_{N,\alpha}}\norm{\japbrak{D_y}^\nu v}_{X_{\leq N,D}}\,,\\
    \label{eq:fg-lhh}
    &\norm{\mathcal{I}\parent{\Pi_{\lo,\hi,\hi}  \parent{F,G,\phi}}}_{Y_N^\nu} \lesssim T^\frac{1}{2} N^{\nu-\sigma'-s+0}\norm{\japbrak{D_y}^sF}_{X_{N,D'}} \norm{\japbrak{D_y}^{\sigma'}G}_{S_{\leq N, D'}} \,,\\
     \label{eq:fv-lhh}
    &\norm{\mathcal{I}\parent{\Pi_{\lo,\hi,\hi}  \parent{F,v,\phi}}}_{Y_N^\nu} \lesssim T^\frac{1}{2}N^{-\sigma'+0}\norm{\japbrak{D_y}^{\sigma'} F}_{S_{N,D'}} \norm{\japbrak{D_y}^\nu v}_{X_{\leq N,D}} \,.
\end{align}
\end{prop}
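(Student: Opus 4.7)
The plan is to handle the four estimates of Proposition~\ref{prop:lhh} by a common scheme running in parallel with the proof of Proposition~\ref{prop:llh}. The starting point is Lemma~\ref{lem:PM}, which reduces each inequality to estimating, for every dyadic $M$, the $L_t^{4/3}L_x^1L_y^2$-norm of $P_M$ applied to the corresponding trilinear expression, weighted by $(C_{\leq N,D}(M)+C_{N,\alpha}(M))M^{\nu}$. In the low-high-high regime the Littlewood-Paley decomposition forces $N_1\ll N_2\sim N_3$, and orthogonality of the Fourier supports localizes the output at $M\sim N_2\sim N_3$. Then I would apply Hölder's inequality with the splits $\frac{1}{4/3}=\frac{1}{\infty}+\frac{1}{8}+\frac{1}{8}$ (gaining $T^{1/2}$ from the time cutoff), $\frac{1}{1}=\frac{1}{2}+\frac{1}{4}+\frac{1}{4}$ and $\frac{1}{2}=\frac{1}{2}+\frac{1}{\infty}+\frac{1}{\infty}$, so that one factor lives in $L_t^{\infty}L_x^2L_y^2$ while the other two go into $L_t^{8}L_x^4L_y^{\infty}$.

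The choice of which factor to place in the $L_t^\infty L_x^2L_y^2$ slot is dictated by the right-hand side. For \eqref{eq:wg-lhh} I would put $w$ (the low-frequency factor) in $L_t^\infty L_x^2L_y^2$ and both $G$ and $\phi$ in the Strichartz norm, so as to recover $\|\langle D_y\rangle^\nu w\|_{X_{N,\alpha}}$ and $\|\langle D_y\rangle^{\sigma'}G\|_{S_{\leq N,D'}}$. For \eqref{eq:wv-lhh} the target contains $\|\langle D_y\rangle^\nu v\|_{X_{\leq N,D}}$, so instead it is $v$ (a high-frequency factor) that I place in $L_t^\infty L_x^2L_y^2$, with $w$ and $\phi$ in Strichartz. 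The cases \eqref{eq:fg-lhh} and \eqref{eq:fv-lhh} are treated analogously, the position of $F$ and $v$ being chosen according to whether the right-hand side carries an $X$-type or $S$-type norm for each.

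After inserting $\langle D_y\rangle^{\nu}$, $\langle D_y\rangle^{\sigma'}$, $\langle D_y\rangle^{\sigma}$ on the appropriate factors and using $\|\langle D_y\rangle^\sigma\phi\|_{S_{\leq N,D}}\lesssim 1$, everything reduces to the convergence of a double dyadic sum of the form
\[
\sum_{M}\sum_{N_1\ll M}\frac{\bigl(C_{\leq N,D}(M)+C_{N,\alpha}(M)\bigr)\,M^{\nu-\sigma'-\sigma}}{C_{\leq N,D'}(M)\,C_{\leq N,D}(M)\,C_{N,\alpha}(N_1)\,N_1^{\nu}}
\]
and its obvious variants for the other three estimates. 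Since $\nu-\sigma'-\sigma=\frac12-\sigma'+0>0$, there is a genuine excess to be beaten by the weights. The $N_1$ sum is controlled thanks to $\alpha<\nu$, producing an $N^{-\alpha}$ gain; for $M\gg N$ the weight ratio is bounded by $(M/N)^{-D'}$, so taking $D'>2D+s+\sigma'$ provides the summability; for $M\lesssim N$ the combination $C_{N,\alpha}(M)\sim(N/M)^{\alpha}$ with the assumption $\alpha>\nu-\sigma'>\frac12-\sigma'$ yields the required $N^{0}$ bound.

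The main obstacle is the careful bookkeeping of all dyadic weights to confirm that the above sum converges, and to trace how the extra $N^{0+}$ losses in \eqref{eq:fg-lhh} and \eqref{eq:fv-lhh} arise: they originate from the arbitrarily small $N^{0+}$ Bernstein loss one incurs when exchanging $L_y^{\infty}$ and $L_y^{r}$ in the Strichartz norm from Lemma~\ref{lem:str} for $F$, together with the $\rho<\sigma$ slack inherent in the frequency-localized estimates for the adapted linear evolution. Compared to the low-high-low setting of Proposition~\ref{prop:llh}, the key difference is that now the two high scales $N_2\sim N_3$ coincide with the output scale $M$, so the weights $C_{\leq N,D'}(N_2)$ and $C_{\leq N,D}(N_3)$ both contribute decay at scale $M$ rather than being shared between the output and a second high scale; this is exactly what compensates the less favorable derivative count $M^{\nu-\sigma'-\sigma}$ in the high-high regime.
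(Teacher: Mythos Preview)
There is a genuine gap in your frequency localization. You assert that ``orthogonality of the Fourier supports localizes the output at $M\sim N_2\sim N_3$'', but this is false in the low-high-high regime. When $N_1\ll N_2\sim N_3$, the two high-frequency factors can interfere (either through the complex conjugate in $\mathcal{N}$ or simply because each $P_{N_j}$ allows both signs of $\eta$), producing an output at \emph{any} frequency $M\lesssim N_2\sim N_3$. The correct decomposition is therefore
\[
M^\nu\norm{P_M\Pi_{\lo,\hi,\hi}(\phi^{(1)},\phi^{(2)},\phi^{(3)})}_{L_t^{4/3}L_x^1L_y^2}
\lesssim M^\nu\sum_{N_2\sim N_3\gtrsim M}\ \sum_{N_1\ll N_2}\norm{P_{N_1}\phi^{(1)}P_{N_2}\phi^{(2)}P_{N_3}\phi^{(3)}}_{L_t^{4/3}L_x^1L_y^2}\,,
\]
with an additional sum over $N_2\gtrsim M$ (and $N_1\ll N_2$, not $N_1\ll M$) that your argument omits entirely. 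Your displayed double sum and the closing paragraph both treat only the diagonal piece $N_2\sim M$, so the off-diagonal contributions $M\ll N_2$ are simply missing from your proof.

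These off-diagonal terms are exactly what distinguishes Proposition~\ref{prop:lhh} from Proposition~\ref{prop:llh}. In the paper the extra sum over $N_2\gtrsim M$ is controlled by the derivative gain $N_2^{-\sigma'}$ (respectively $N_2^{-\nu}$) together with the weight $\max(1,N_2/N)^{-D'}$ (respectively $\max(1,N_2/N)^{-D}$) coming from the $S_{\leq N,D'}$ or $X_{\leq N,D}$ norm of the second factor, via
\[
\sum_{N_2\gtrsim M}\max\Bigl(1,\tfrac{N_2}{N}\Bigr)^{-D'}N_2^{-\sigma'}\lesssim
\begin{cases}
M^{-\sigma'} & M\lesssim N\,,\\
(M/N)^{-D'}M^{-\sigma'} & M\gg N\,.
\end{cases}
\]
Only after this step does one reach a single sum in $M$ of the type you describe. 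Your final paragraph has the situation backwards: the coincidence $N_2\sim N_3$ does not pin the output at $M\sim N_2$; it is precisely what allows the output to leak to arbitrarily low frequencies, and controlling that leakage is the new ingredient here.
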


\begin{prop}[Trilinear estimates with \emph{high-high} interactions type]
\label{prop:hh}
Let $\phi$ be of type $v$ or $G$, and assume that $\max(\nu-\sigma',\sigma)<\alpha<\nu$ and $2D+\nu+s+\sigma'<D'$. Assuming that $\norm{\japbrak{D_y}^\sigma\phi}_{S_{\leq N,D}}\lesssim 1$, we have
\begin{align} 
    \label{eq:wg-hh}
    &\norm{\mathcal{I}\parent{\Pi_{\hi,\hi}  \parent{w,G,\phi}}}_{Y_N^\nu}
    	\lesssim T^\frac{1}{2} \norm{\japbrak{D_y}^\nu w}_{X_{N,\alpha}}\norm{\japbrak{D_y}^{\sigma'}G}_{S_{\leq N, D'}}\,,\\
    \label{eq:wv-hh}
    &\norm{\mathcal{I}\parent{\Pi_{\hi,\hi}  \parent{w,v,\phi}}}_{Y_N^\nu}
    	 \lesssim T^\frac{1}{2}\norm{\japbrak{D_y}^\nu w}_{X_{N,\alpha}\cap X_{\leq N,D}} \norm{\japbrak{D_y}^\sigma v}_{S_{\leq N,D}}\,,\\
    \label{eq:fg-hh}
    &\norm{\mathcal{I}\parent{\Pi_{\hi,\hi}\parent{F,G,\phi}}}_{Y_N^\nu} 
    	\lesssim T^\frac{1}{2} N^{\nu-\sigma'-s+0}\norm{\japbrak{D_y}^sF}_{X_{N,D'}} \norm{\japbrak{D_y}^{\sigma'}G}_{S_{\leq N, D'}} \,,\\
    \label{eq:fv-hh}
    &\norm{\mathcal{I}\parent{\Pi_{\hi,\hi}\parent{F,v,\phi}}}_{Y_N^\nu}
    	 \lesssim T^\frac{1}{2}N^{-\sigma'+0}\norm{\japbrak{D_y}^{\sigma'} F}_{S_{N,D'}}
    	 \norm{\japbrak{D_y}^\nu v}_{X_{\leq N,D}} \,.
\end{align}
\end{prop}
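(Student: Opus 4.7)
The plan is to apply Lemma~\ref{lem:PM} to reduce each of the four inequalities \eqref{eq:wg-hh}--\eqref{eq:fv-hh} to the estimation of
\[
\sum_M \parent{C_{\leq N,D}(M)+C_{N,\alpha}(M)}M^\nu \norm{P_M\mathcal{N}\parent{\phi^{(1)},\phi^{(2)},\phi^{(3)}}}_{L_t^{4/3}L_x^1L_y^2}
\]
restricted to the paracontrolled regime $N_3\lesssim N_1\sim N_2$, $M\lesssim N_1$. After a Littlewood--Paley decomposition of each factor, I apply Hölder with exponents $\frac{3}{4}=\frac{1}{2}+\frac{1}{8}+\frac{1}{8}$, $1=\frac{1}{2}+\frac{1}{4}+\frac{1}{4}$, $\frac{1}{2}=\frac{1}{2}+\frac{1}{\infty}+\frac{1}{\infty}$, followed by the embedding $L_t^\infty\hookrightarrow L_t^2$ on $\intervalcc{-T}{T}$, producing a factor $T^{1/2}$ and leaving a skeletal bound of the shape $T^{1/2}\norm{\cdot}_{L_t^\infty L_{x,y}^2}\norm{\cdot}_{L_t^8L_x^4L_y^\infty}^2$ as in the previous three propositions.

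\textbf{Choice of slots.} Following the strategy explained in Section~\ref{sec:a-priori}, a factor of type $w$ or $v$ always goes in the $L^\infty L^2$ slot (absorbed by the $X$-norm at $\nu$ derivatives), while factors of type $F$ or $G$ always go in a Strichartz slot (absorbed by the $S$-norm at $\sigma'$ derivatives); this holds even when the $F$ or $G$ factor sits at the top frequency, so as to benefit from the probabilistic Strichartz gain of Proposition~\ref{prop:srt-proba} via the $S_{N,D'}$-norm. The low factor $\phi$ at $N_3$ occupies the remaining Strichartz slot and contributes a bound $\lesssim1$ through the truncation of $\norm{\japbrak{D_y}^\sigma\phi}_{S_{\leq N,D}}$. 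For the mixed estimates \eqref{eq:wg-hh} and \eqref{eq:fv-hh} this assignment is forced by the statement; for \eqref{eq:wv-hh} and \eqref{eq:fg-hh} the two high inputs are of the same type, and either may be placed in the $L^\infty L^2$ slot.

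\textbf{Summation.} After Hölder, each $\norm{P_{N_i}\phi^{(i)}}$ is replaced by its defining norm, divided by the corresponding dyadic weight and derivative factor. The remaining work is to verify that the resulting geometric sum over $(M,N_1,N_2,N_3)$, subject to $N_3\lesssim N_1\sim N_2$ and $M\lesssim N_1$, is absolutely convergent. This reduces to the elementary checks: $\nu>\alpha$ handles the sum over $M\leq N_1\leq N$; $\sigma>0$ handles the sum over $N_3$; $\sigma<\sigma'<s$ handles the passage to the $s$-derivative norms for $F$, $G$; and the sharpened condition $2D+\nu+s+\sigma'<D'$ handles the sum over the large common scale $N_1\sim N_2\gg N$.

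\textbf{Main obstacle.} The novelty relative to Propositions~\ref{prop:llh} and~\ref{prop:lhh} is that the high-high cancellation allows the output frequency $M$ to be much smaller than the top input scale $N_1$, so in the regime $N_1\sim N_2\gg N\gg M$ the weight $C_{N,\alpha}(M)M^\nu$ can exceed $C_{N,\alpha}(N_1)N_1^\nu$ by as much as $N_1^\nu$. This loss must be reabsorbed into the $D'$-weight on the high-frequency $F$- or $G$-norm, which is precisely why the hypothesis $2D+s+\sigma'<D'$ of the previous propositions is sharpened by one power of $\nu$ to $2D+\nu+s+\sigma'<D'$ here. Once this regime is controlled, the remaining regimes ($M\sim N_1$, or all frequencies $\lesssim N$) are routine variants of the computations already performed in Propositions~\ref{prop:llh} and~\ref{prop:lhh}, and I do not expect any new difficulty from them.
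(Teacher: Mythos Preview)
Your approach is essentially the same as the paper's: reduce via Lemma~\ref{lem:PM}, apply H\"older to place one high input in $L_t^\infty L_{x,y}^2$ and the other two in $L_t^8L_x^4L_y^\infty$, insert the weighted norms, and sum; your slot assignments agree with the paper's (for \eqref{eq:wg-hh} and \eqref{eq:fv-hh} they are forced by the statement, and for \eqref{eq:wv-hh}, \eqref{eq:fg-hh} the paper likewise places $w$, respectively $F$, in the $L^\infty L^2$ slot). Your identification of the ``main obstacle'' --- that $M\ll N_1\sim N_2$ is now permitted, so an extra power $N_1^\nu$ must be absorbed by the $D'$-weights when $N_1\gg N$, forcing the sharpened hypothesis $2D+\nu+s+\sigma'<D'$ --- matches the paper's computation exactly.
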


The general strategy is as follows. The first term $\phi^{(1)}$ is of type $w$ or $F$. Hence, it is always localized at frequencies $\sim N$, in the sense that they belong to the  spaces $X_{N,\alpha}$ and $S_{N,\alpha}$.  When it is of type $F$, and the other terms are of type $w$, we place $F$ it in $L^\infty$ to exploit the improved stochastic bounds from Proposition~\ref{prop:srt-proba}.

\begin{proof}[Proof of Proposition~\ref{prop:hll}]
It is sufficient to put the third term in $S_{\leq N, D}$ with $\sigma$ derivatives, and to perform the same analysis as in~\cite{bringmann2021-ansatz} Section 7 to handle the first two terms. We only detail the first case.

~\\
$\bullet$\emph{Proof of~\eqref{eq:fv-gam-hl}}.
For fixed $M\geq1$, we have 
\begin{multline*}
 M^\nu\norm{P_M\parent{ \Pi_{\hi,\lo,\lo}\parent{F , P_{>N^\gamma}v, \phi }} }_{L_t^\frac{4}{3}L_x^1L_y^2} \\
    \lesssim  M^\nu\sum_{K\sim M}\sum_{N^\gamma < L_1\ll K}\sum_{L_2\ll K} \norm{P_M\parent{P_KF_N P_{L_1}w_{\leq \frac N2}P_{L_2}u_{\frac N2}}}_{L_t^\frac{4}{3}L_x^1L_y^2} \\
    \lesssim M^\nu T^\frac{1}{2}\sum_{K\sim M}\sum_{N^\gamma < L_1\ll M}\sum_{L_2\ll M}L_1^{-\nu}L_2^{-\sigma}K^{-\sigma'}\min\parentbig{\frac{N}{K},\frac{K}{N}}^{-D'} \\
    C_{N,D'}(K)\norm{\japbrak{D_y}^{\sigma'}P_KF}_{L_t^8L_x^4L_y^\infty}\norm{\japbrak{D_y}^\nu P_{L_1}v}_{L_t^\infty L_{x,y}^2}\norm{\japbrak{D_y}^\sigma P_{L_2}\phi}_{L_t^8L_x^4L_y^\infty}\,.   
\end{multline*}
Therefore, we obtain from the definitions of the norms and the assumption $D'\gg\max(\alpha + \sigma' + \gamma\nu,D + \nu)$, that
\begin{multline*}
    M^\nu\norm{P_M  \Pi_{\hi,\lo,\lo}F , P_{>N^\gamma}v, \phi }_{L_t^\frac{4}{3}L_x^1L_y^2} \\
    \lesssim T^\frac{1}{2}M^{\nu-\sigma'-\gamma\nu+0}\min\parentbig{\frac{N}{M},\frac{M}{N}}^{-D'}
    \norm{\japbrak{D_y}^{\sigma'}F}_{S_{N,D'}}\norm{\japbrak{D_y}^{\nu}v}_{X_{\leq N,D}}\norm{\japbrak{D_y}^{\sigma}\phi}_{S_{\leq N,D}}\,.
\end{multline*}

It remains to multiply by $C_{N,\alpha}(M)+ C_{\leq N, D}(M)$, and to sum over $M$. Since $D>\alpha$, we get
\begin{equation*}
    \begin{split}
            \sum_M &M^{\nu-\sigma'-\gamma\nu+0}\parent{C_{N,\alpha}(M)+C_{\leq N, D}(M)}\parentbig{1_{M\lesssim N} \parentbig{\frac{N}{M}}^{-D'} + 1_{M\gg N} \parentbig{\frac{M}{N}}^{-D'}} \\
            &=\sum_M M^{\nu-\sigma'-\gamma\nu+0}\parentbig{1_{M\lesssim N} \parentbig{\frac{N}{M}}^{-D'+\alpha} + 1_{M\gg N} \parentbig{\frac{M}{N}}^{-D'+D}} \\
            &=\sum_M N^{\nu-\sigma'-\gamma\nu+0}\parentbig{1_{M\lesssim N} \parentbig{\frac{N}{M}}^{-D'+\alpha-\nu+\sigma'+\gamma\nu} + 1_{M\gg N} \parentbig{\frac{M}{N}}^{-D'+D+\nu-\sigma'-\gamma\nu}} \\
            &\lesssim N^{\nu-\sigma'-\gamma\nu+0}\,,
    \end{split}
\end{equation*}

Similarly, in all the other \emph{high-low-low}-type frequency interactions estimated in Proposition ~\ref{prop:hll}, we put the third term in $L_t^8L_x^4L_y^\infty$, and we gain nothing from it. As for the quadratic interaction of the first two terms, the analysis is exactly as in~\cite{bringmann2021-ansatz}, and we do not write it here.
\end{proof}

\begin{proof}[Proof of Proposition~\ref{prop:llh}]
 For fixed $M\in2^N$, note that we have by definition 
\begin{multline}\label{ineq:decompo-llh}
    M^\nu\norm{P_M\Pi_{\lo,\hi,\lo}  \parent{\phi^{(1)},\phi^{(2)},\phi^{(3)}}}_{L_t^\frac{4}{3}L_x^1L_y^2} \\ \lesssim M^\nu\sum_{N_2\sim M}\sum_{N_1,N_3\ll N_2}\norm{P_{N_1}\phi^{(1)}P_{N_2}\phi^{(2)}P_{N_3}\phi^{(3)}}_{L_t^\frac{4}{3}L_x^1L_y^2}\,.
\end{multline}
~\\
$\bullet$\emph{Proof of~\eqref{eq:wg}}. In this case, $\phi^{(2)}=G$ and $\phi^{(1)}=w$. On the other hand, the second term $\phi^{(3)}:=\phi$, localized at frequencies $\lesssim N$, has no importance, and we always place it in $L^\infty$ with $\sigma$ derivatives in $y$. We do not gain anything from it. There holds
\begin{multline*}
     \sum_{N_2\sim M}\sum_{N_1,N_3\ll N_2}\norm{P_{N_1}w P_{N_2}G P_{N_3}\phi^{(3)}}_{L_t^\frac{4}{3}L_x^1L_y^2} 
    \\
    \lesssim T^\frac{1}{2}\norm{\japbrak{D_y}^\nu w}_{X_{N,\alpha}} 
    \parentbig{\sum_{N_1\ll M} \max\Big(\frac{N}{N_1},\frac{N_1}{N}\Big)^{-\alpha} N_1^{-\nu}}
    \\
    \sum_{N_2\sim M}N_2^{-\sigma'}\norm{\japbrak{D_y}^{\sigma'}P_{N_2}G}_{L_t^8L_x^4L_y^\infty} \norm{\japbrak{D_y}^\sigma\phi^{(3)}}_{S_{\leq N,D}}\\
    \lesssim T^\frac{1}{2}M^{-\sigma'}N^{-\alpha}\max\parentbig{1,\frac{M}{N}}^{-D'}\norm{\japbrak{D_y}^\nu w}_{X_{N,\alpha}} \norm{\japbrak{D_y}^{\sigma'}G}_{S_{\leq N,D'}} \norm{\japbrak{D_y}^\sigma\phi^{(3)}}_{S_{\leq N,D}}\,.
\end{multline*}
Note that we used the assumption $\alpha<\nu$ to obtain
\begin{equation}\label{sum:alpha-nu}
\sum_{N_1}\max\parentbig{\frac{N_1}{N},\frac{N}{N_1}}^{-\alpha}N_1^{-\nu } \lesssim N^{-\alpha}\,.
\end{equation}
Then, we multiply by $\parent{C_{\leq N, D}(M)+C_{N,\alpha}(M)}M^\nu$, and using that $\alpha>\nu-\sigma'$, $D>\alpha$ and $D'>D+\nu$, we sum over $M$ to conclude
\begin{multline*}
        \sum_M \parent{C_{N,\alpha}(M)+C_{\leq N,D}(M)} M^{\nu-\sigma'+0}N^{-\alpha}\max\parentbig{1,\frac{M}{N}}^{-D'} \\
    \lesssim \sum_{M\lesssim N} M^{\nu-\sigma'-\alpha+0} + N^{-\alpha}\sum_{M\gg N} M^{\nu-\sigma'+0}\parentbig{\frac{M}{N}}^{D-D'} \lesssim 1\,.
\end{multline*}
~\\
$\bullet$\emph{Proof of~\eqref{eq:fg}}.
We proceed similarly in this case, where $\phi^{(1)}=F$ and $\phi^{(2)}=G$. Once again, we use the localization of $F$ at frequencies $\sim N$, which is stronger than the localization of~$w$. There holds
\begin{multline*}
   \sum_{N_2\sim M}\sum_{N_1,N_3\ll N_2}\norm{ P_{N_1}F P_{N_2}G P_{N_3}\phi^{(3)}}_{L_t^\frac{4}{3}L_x^1L_y^2} \\
   \lesssim T^\frac{1}{2}\parentbig{\sum_{N_1\ll M} \max\parentbig{\frac{N}{N_1},\frac{N_1}{N}}^{-D'} N_1^{-s }}\norm{\japbrak{D_y}^s F}_{X_{N,D'}}\\
   \sum_{N_2\sim M}N_2^{-\sigma'+0}\norm{\japbrak{D_y}^{\sigma'}P_{N_2}G}_{L_t^8L_x^4L_y^\infty}\norm{\japbrak{D_y}^{\sigma}\phi^{(3)}}_{S_{\leq N,D}}\,.
\end{multline*}
Then, we observe that 
\begin{equation*}
    \begin{split}
   M^\nu\parentbig{\sum_{N_1\ll M} \max\parentbig{\frac{N}{N_1},\frac{N_1}{N}}^{-D'} & N_1^{-s }}\sum_{N_2\sim M}N_2^{-\sigma' +0}\max\parentbig{1,\frac{N_2}{N}}^{-D'}\\
    &\lesssim M^{\nu-\sigma' +0}\parentbig{\un_{M\lesssim N}\parentbig{\frac{N}{M}}^{-D'}M^{-s }+\un_{M\gg N}N^{-s }\parentbig{\frac{M}{N}}^{-D'}}\\
    &\lesssim N^{\nu-\sigma'-s}\parentbig{\un_{M\lesssim N}\parentbig{\frac{N}{M}}^{s-D'-\nu+\sigma'-0}+\un_{M\gg N}\parentbig{\frac{M}{N}}^{-D'+\nu-\sigma'+0}}\\
    &\lesssim N^{\nu-\sigma'-s}\max\parentbig{\frac{M}{N},\frac{N}{M}}^{-2D}\,,
    \end{split}
\end{equation*}
provided $D' > 2D + \nu + \sigma' + s$. Hence, we can multiply by $\parent{C_{\leq N, D}(M)+C_{N,\alpha}(M)}M^\nu$, sum over $M$ and obtain~\eqref{eq:fg}.

~\\
$\bullet$\emph{Proof of~\eqref{eq:wv}}. This case is easier since the derivatives fall onto the term $v$ which is in $\mathcal{H}^\nu$. We have $\phi^{(2)}=v$ and $\phi^{(1)}=w$. We place the high frequency term in $L^2$ in order to absorb $M^\sigma$, and we end up with 
\begin{equation*}
    \begin{split}
    \sum_{N_2\sim M}\sum_{N_1,N_3\ll N_2}  &\norm{P_{N_1}w P_{N_2}v P_{N_3}\phi^{(3)}}_{L_t^\frac{4}{3}L_x^1L_y^2} \\
        &\lesssim T^\frac{1}{2}\sum_{N_1,N_3\ll M}\sum_{N_2\sim M}\norm{P_{N_1}w}_{L_t^8L_x^4L_y^\infty}\norm{P_{N_3}\phi^{(3)}}_{L_t^8L_x^4L_y^\infty} \norm{P_{N_2}v}_{L_t^\infty L_{x,y}^2}\\
        &\lesssim T^\frac{1}{2}\parentbig{\sum_{N_1\ll M}\max\parentbig{\frac{N}{N_1},\frac{N_1}{N}}^{-\alpha}N_1^{-\sigma}}
        \sum_{N_2\sim M}\max\parentbig{1,\frac{N_2}{N}}^{-D}  N_2^{-\nu}\\
        &\hspace{150pt} \norm{\japbrak{D_y}^\sigma w}_{S_{N,\alpha}}\norm{\japbrak{D_y}^\sigma\phi^{(3)}}_{S_{\leq N,D}}\norm{\japbrak{D_y}^\nu P_{N_2}v}_{X_{\leq N,D}}\,.
    \end{split}
\end{equation*}
Since $\alpha>\sigma,$ we observe that the sum over $N_1$ is bounded by $N^{-\alpha}M^{\alpha-\sigma}$ when $M\lesssim N$ and $N^{-\sigma}$ when $M\gg N$, so that
\begin{multline*}
M^{\nu}\sum_{N_1\ll M}\max\parentbig{\frac{N}{N_1},\frac{N_1}{N}}^{-\alpha}N_1^{-\sigma} \sum_{N_2\sim M}\max\parentbig{1,\frac{N_2}{N}}^{-D}N_2^{-\nu}\\
	\lesssim \un_{M\lesssim N} N^{-\alpha}M^{\alpha-\sigma}+\un_{M\gg N}\parentbig{\frac{M}{N}}^{-D} N^{-\sigma}\\
	\lesssim \un_{M\lesssim N} \parentbig{\frac{N}{M}}^{-\alpha}M^{-\sigma}+\un_{M\gg N}\parentbig{\frac{M}{N}}^{-D}\,.
\end{multline*}
We deduce~\eqref{eq:wv} after multiplying by $\parent{C_{\leq N, D}(M)+C_{N,\alpha}(M)}$ and summing over $M$, making the norm $\norm{\japbrak{D_y}^\nu v}_{X_{\leq N,D}}$ appear. 

~\\
$\bullet$\emph{Proof of~\eqref{eq:fv}.}
This proof is identical than the one above. Observe that we only used the localization of $w$ at frequency $N$ in the Strichartz space $S_{N,\alpha}$, and the localization of $F$ is even better. Indeed, we have
\begin{multline*}
\sum_{N_2\sim M}\sum_{N_1,N_3\ll N_2}  \norm{P_{N_1}F P_{N_2}v P_{N_3}\phi^{(3)}}_{L_t^\frac{4}{3}L_x^1L_y^2} \\
        \lesssim T^\frac{1}{2}\parentbig{\sum_{N_1\ll M}\max\parentbig{\frac{N}{N_1},\frac{N_1}{N}}^{-D'}N_1^{-\sigma'}}
        \sum_{N_2\sim M}\max\parentbig{1,\frac{N_2}{N}}^{-D}  N_2^{-\nu}\\
	 \norm{\japbrak{D_y}^{\sigma'} F}_{S_{N,D'}}\norm{\japbrak{D_y}^\sigma\phi^{(3)}}_{S_{\leq N,D}}\norm{\japbrak{D_y}^\nu P_{N_2}v}_{X_{\leq N,D}}\,.
\end{multline*}
Since $D'>\sigma',$ we have
\[
\un_{M\gg N} \sum_{N_1\ll N}\parentbig{\frac{N}{N_1}}^{-D'}N_1^{-\sigma'}\lesssim N^{-\sigma'}\,,
\]
\[
\un_{M\gg N} \sum_{N\lesssim N_1\ll M} \parentbig{\frac{N_1}{N}}^{-D'}N_1^{-\sigma'}\lesssim N^{-\sigma'}\,,
\]
\[
\un_{M\lesssim N} \sum_{N_1\ll M}\parentbig{\frac{N}{N_1}}^{-D'}N_1^{-\sigma'}\lesssim N^{-D'}M^{D'-\sigma'}\,,
\]
so that
\begin{multline*}
M^{\nu}\parentbig{\sum_{N_1\ll M}\max\parentbig{\frac{N}{N_1},\frac{N_1}{N}}^{-D'}N_1^{-\sigma'}} \sum_{N_2\sim M}\norm{\japbrak{D_y}^\nu P_{N_2}v}_{X_{\leq N,D}} N_2^{-\nu}\\
	\lesssim \un_{M\lesssim N} \parentbig{\frac{N}{M}}^{-D'+\sigma'-0}N^{-\sigma'} + \un_{M\gg N} \parentbig{\frac{M}{N}}^{-D}N^{-\sigma'}\,.
\end{multline*}
Since $D'>\sigma'+\alpha$, we deduce~\eqref{eq:fv} after multiplying by $\parent{C_{\leq N, D}(M)+C_{N,\alpha}(M)}$ and summing over $M$, making the norm $\norm{\japbrak{D_y}^\nu v}_{X_{\leq N,D}}$ appear.
\end{proof}

\begin{proof}[Proof of Proposition~\ref{prop:lhh}]
In this situation, inequality~\eqref{ineq:decompo-llh} becomes restricted to the indices $N_2\sim N_3\gtrsim M$ such that $N_1\ll N_2$:
\begin{multline*}
    M^\nu\norm{P_M\Pi_{\lo,\hi,\hi}  \parent{\phi^{(1)},\phi^{(2)},\phi^{(3)}}}_{L_t^\frac{4}{3}L_x^1L_y^2} 
    \lesssim M^\nu\sum_{N_2\sim N_3\gtrsim M}\sum_{N_1\ll N_2}\norm{P_{N_1}\phi^{(1)}P_{N_2}\phi^{(2)}P_{N_3}\phi^{(3)}}_{L_t^\frac{4}{3}L_x^1L_y^2}\,.
\end{multline*}
~\\
$\bullet$\emph{Proof of~\eqref{eq:wg-lhh}.}
Now, we observe that 
\begin{multline*}
\sum_{N_2\sim N_3\gtrsim M}\sum_{N_1\ll N_2}\norm{P_{N_1}w P_{N_2}G P_{N_3}\phi^{(3)}}_{L_t^\frac{4}{3}L_x^1L_y^2}\\
	\lesssim T^{\frac 12}\sum_{ N_2\gtrsim M}\sum_{N_1\ll N_2}\max\parentbig{\frac{N}{N_1},\frac{N_1}{N}}^{-\alpha} 
	\max\parentbig{1,\frac{N_2}{N}}^{-D'} N_1^{-\nu} N_2^{-\sigma'}\\
	\norm{\japbrak{D_y}^{\nu}w}_{X_{N,\alpha}}\norm{\japbrak{D_y}^{\sigma}\phi^{(3)}}_{S_{\leq N,D}}\norm{\japbrak{D_y}^{\sigma'}G}_{S_{\leq N,D'}} \,.
\end{multline*}
Then, according to~\eqref{sum:alpha-nu}, the sum over $N_1$ is bounded by $N^{-\alpha}$. Moreover, one can check that
\begin{equation}\label{sum:N3}
\sum_{M\lesssim N_2}\max\left(1,\frac{N_2}{N}\right)^{-D'}N_2^{-\sigma'}
	\lesssim
\begin{cases}
M^{-\sigma'}& \text{ if } M\lesssim N\,,\\
 \parent{\frac{M}{N}}^{-D'} M^{-\sigma'} & \text{ if } M\gg M\,,
\end{cases}
\end{equation}
therefore we have
\begin{multline*}
M^{\nu}\sum_{ N_2\gtrsim M}\sum_{N_1\ll N_2}\max\parentbig{\frac{N}{N_1},\frac{N_1}{N}}^{-\alpha} 
	\max\parentbig{1,\frac{N_2}{N}}^{-D'} N_1^{-\nu} N_2^{-\sigma'}\\
		\lesssim  \un_{M\lesssim N}N^{-\alpha}M^{\nu-\sigma'}+\un_{M\gg N}\parentbig{\frac{M}{N}}^{-D'}N^{-\alpha}M^{\nu-\sigma'}\\
		\lesssim \un_{M\lesssim N}\parentbig{\frac{N}{M}}^{-\alpha}M^{\nu-\sigma'-\alpha}+\un_{M\gg N}\parentbig{\frac{M}{N}}^{-D'+\alpha}M^{\nu-\sigma'-\alpha} \,.
\end{multline*}
Estimate~\eqref{eq:wg-hh} comes from multyplying by $\parent{C_{\leq N, D}(M)+C_{N,\alpha}(M)}$ and summing over $M$, using that $\alpha>\nu-\sigma'$.

~\\
$\bullet$\emph{Proof of~\eqref{eq:wv-lhh}.}
Similarly, we write
\begin{multline*}
\sum_{N_2\sim N_3\gtrsim M}\sum_{N_1\ll N_2}\norm{P_{N_1}w P_{N_2}v P_{N_3}\phi^{(3)}}_{L_t^\frac{4}{3}L_x^1L_y^2}\\
	\lesssim T^{\frac 12}\sum_{ N_2\gtrsim M}\sum_{N_1\ll N_2}
	\max\parentbig{\frac{N}{N_1},\frac{N_1}{N}}^{-\alpha} \max\parentbig{1,\frac{N_2}{N}}^{-D} N_1^{-\sigma} N_2^{-\nu}\\
	\norm{\japbrak{D_y}^{\sigma}w}_{S_{N,\alpha}}\norm{\japbrak{D_y}^{\sigma}\phi^{(3)}}_{S_{\leq N,D}}\norm{\japbrak{D_y}^{\nu}P_{N_2}v}_{X_{\leq N,D}} \,.
\end{multline*}
Since $\alpha>\sigma$, we observe that the sum over $N_1$ is bounded by $\parent{\frac{N}{M}}^{-\alpha}M^{-\sigma}$ when $M\lesssim N$ and $N^{-\sigma}$ when $M\gg N$. Moreover, the sum over $N_2$ is bounded as in~\eqref{sum:N3} by replacing $D'$ by $D$ and $\sigma'-0$ by $\nu$, which leads to 
\begin{multline*}
 M^{\nu}\sum_{ N_2\gtrsim M}\sum_{N_1\ll N_2}\max\parentbig{\frac{N}{N_1},\frac{N_1}{N}}^{-\alpha}
	\max\parentbig{1,\frac{N_2}{N}}^{-D} N_1^{-\sigma} N_2^{-\nu}\\
	\lesssim  \un_{M\lesssim N} \parentbig{\frac{N}{M}}^{-\alpha}M^{-\sigma}+ \un_{M\gg N}\parentbig{\frac{M}{N}}^{-D} N^{-\sigma}\,.
\end{multline*}
It remains to multiply by $\parent{C_{\leq N, D}(M)+C_{N,\alpha}(M)}$ and sum over $M$ to get the norm $\norm{\japbrak{D_y}^{\nu}v}_{X_{\leq N,D}}$.

~\\
$\bullet$\emph{Proof of~\eqref{eq:fg-lhh}.}
We adopt the same strategy but we put the first term in the adapted space $X_{N,D'}$ after taking $s$ derivatives:
\begin{multline*}
\sum_{N_2\sim N_3\gtrsim M}\sum_{N_1\lesssim N_2}\norm{P_{N_1}F P_{N_2}G P_{N_3}\phi^{(3)}}_{L_t^\frac{4}{3}L_x^1L_y^2}\\
	\lesssim T^{\frac 12}\sum_{ N_2\gtrsim M}\sum_{N_1\lesssim N_2}\max\parentbig{\frac{N}{N_1},\frac{N_1}{N}}^{-D'} 
	\max\parentbig{1,\frac{N_2}{N}}^{-D'} N_1^{-s} N_2^{-\sigma'}\\
	\norm{\japbrak{D_y}^{s}F}_{X_{N,D'}}\norm{\japbrak{D_y}^{\sigma}\phi^{(3)}}_{S_{\leq N,D}}\norm{\japbrak{D_y}^{\sigma'}G}_{S_{\leq N,D'}} \,.
\end{multline*}
Since $D'>s$, the sum over $N_1$ is bounded by $\parent{\frac{N}{M}}^{-D'}M^{-s}$ when $M\lesssim N$ and $N^{-s}$ when $M\gg N$. Moreover, the sum over $N_2$ is bounded as in~\eqref{sum:N3} and we get
\begin{multline*}
 M^{\nu}\sum_{ N_2\gtrsim M}\sum_{N_1\lesssim N_2} \max\parentbig{\frac{N}{N_1},\frac{N_1}{N}}^{-D'}
	\max\parentbig{1,\frac{N_2}{N}}^{-D'} N_1^{-s} N_2^{-\sigma'}\\
	\lesssim \un_{M\lesssim N}\parentbig{\frac{N}{M}}^{-D'}M^{\nu-s-\sigma'}
	+\un_{M\gg N}\parentbig{\frac{M}{N}}^{-D'} N^{-s}M^{\nu-\sigma'}\\
	\lesssim \un_{M\lesssim N}\parentbig{\frac{N}{M}}^{-D'-\nu+s+\sigma'-0}N^{\nu-s-\sigma'}
	+\un_{M\gg N}\parentbig{\frac{M}{N}}^{-D'-\nu+\sigma'} N^{\nu-s-\sigma'}\,.
\end{multline*}
Since $D'>\alpha+s+\sigma'$ and $D'>D+\sigma'$, multiplying by $\parent{C_{\leq N, D}(M)+C_{N,\alpha}(M)}$ the sum over $M$ is bounded by $N^{\nu-s-\sigma'}$.

~\\
$\bullet$\emph{Proof of~\eqref{eq:fv-lhh}.}
Finally, we put the term $F$ in $S_{N,D'}$:
\begin{multline*}
\sum_{N_2\sim N_3\gtrsim M}\sum_{N_1\lesssim N_2}\norm{P_{N_1}F P_{N_2}v P_{N_3}\phi^{(3)}}_{L_t^\frac{4}{3}L_x^1L_y^2}\\
	\lesssim T^{\frac 12}\sum_{N_2\gtrsim M} \sum_{N_1\lesssim N_2} \max\parentbig{\frac{N}{N_1},\frac{N_1}{N}}^{-D'} \max\parentbig{1,\frac{N_2}{N}}^{-D} N_1^{-\sigma'} N_2^{-\nu}\\
	\norm{\japbrak{D_y}^{\sigma'}F}_{S_{N,D'}}\norm{\japbrak{D_y}^{\sigma}\phi^{(3)}}_{S_{\leq N,D}}\norm{\japbrak{D_y}^{\nu}v}_{X_{\leq N,D}} \,.
\end{multline*}
The sum over $N_1$ is bounded as above by $\parent{\frac{N}{M}}^{-D'}M^{-\sigma'}$ when $M\lesssim N$ and $N^{-\sigma'}$ when $M\gg N$, and the sum over $N_2$ is bounded as in~\eqref{sum:N3} by $M^{-\nu}$ if $M\lesssim N$ and  $\parent{\frac{M}{N}}^{-D}M^{-\nu}$ if $M\gg N$. We get
\begin{multline*}
 M^{\nu}\sum_{N_2\gtrsim M}\sum_{N_1\lesssim N_2} \max\parentbig{\frac{N}{N_1},\frac{N_1}{N}}^{-D'}
	\max\parentbig{1,\frac{N_2}{N}}^{-D} N_1^{-\sigma'} N_2^{-\nu}\\
	\lesssim \un_{M\lesssim N}\parentbig{\frac{N}{M}}^{-D'}M^{-\sigma'}
	+\un_{M\gg N}\parentbig{\frac{M}{N}}^{-D} N^{-\sigma'}\,.
\end{multline*}
Since $D'>\alpha+\sigma'$, after multiplying by $\parent{C_{\leq N, D}(M)+C_{N,\alpha}(M)}$ the sum over $M$ is bounded by $N^{-\sigma'}$.
\end{proof}

\begin{proof}[Proof of Proposition~\ref{prop:hh}]
In this situation, inequality~\eqref{ineq:decompo-llh} becomes restricted to the indices $N_1\sim N_2\gtrsim M$ such that $N_3\lesssim N_1$:
\begin{multline*}
    M^\nu\norm{P_M\Pi_{\hi,\hi}  \parent{\phi^{(1)},\phi^{(2)},\phi^{(3)}}}_{L_t^\frac{4}{3}L_x^1L_y^2} 
    \lesssim M^\nu\sum_{N_1\sim N_2\gtrsim M}\sum_{N_3\lesssim N_1}\norm{P_{N_1}\phi^{(1)}P_{N_2}\phi^{(2)}P_{N_3}\phi^{(3)}}_{L_t^\frac{4}{3}L_x^1L_y^2}\,.
\end{multline*}
~\\
$\bullet$\emph{Proof of~\eqref{eq:wg-hh}.}
Now, we observe that 
\begin{multline*}
\sum_{N_1\sim N_2\gtrsim M}\sum_{N_3\lesssim N_1}\norm{P_{N_1}w P_{N_2}G P_{N_3}\phi^{(3)}}_{L_t^\frac{4}{3}L_x^1L_y^2}\\
	\lesssim T^{\frac 12}\sum_{N_1\sim N_2\gtrsim M}\max\parentbig{\frac{N}{N_1},\frac{N_1}{N}}^{-\alpha} 
	\max\parentbig{1,\frac{N_2}{N}}^{-D'} N_1^{-\nu} N_2^{-\sigma'}\\
	\norm{\japbrak{D_y}^{\nu}w}_{X_{N,\alpha}}\norm{\japbrak{D_y}^{\sigma}\phi^{(3)}}_{S_{\leq N,D}}\norm{\japbrak{D_y}^{\sigma'}G}_{S_{\leq N,D'}} \,.
\end{multline*}
Then, since $N_2\sim N_1$ and $\alpha<\nu$, we observe that 
\[\sum_{M\lesssim N_1\ll N}\parentbig{\frac{N}{N_1}}^{-\alpha}N_1^{-\nu-\sigma'}\lesssim N^{-\alpha}M^{\alpha-\nu-\sigma'}=\parentbig{\frac NM}^{-\alpha} M^{-\nu-\sigma'}\,,
\] 
\[\sum_{M\lesssim N\lesssim N_1}\parentbig{\frac{N_1}{N}}^{-\alpha-D'}N_1^{-\nu-\sigma'}
	\lesssim N^{-\nu-\sigma'}=\parentbig{\frac NM}^{-\nu}M^{-\nu}N^{-\sigma'}\,,
\]
\[
\sum_{N_1\gtrsim M\gg N}\parentbig{\frac{N_1}{N}}^{-\alpha-D'}N_1^{-\nu-\sigma'}
	\lesssim \parentbig{\frac{M}{N}}^{-\alpha-D'} M^{-\nu-\sigma'}\,,
\]
therefore since $D'>D$, we have
\begin{multline*}
M^{\nu}\sum_{N_1\gtrsim M}\max\parentbig{\frac{N}{N_1},\frac{N_1}{N}}^{-\alpha} 
	\max\parentbig{1,\frac{N_1}{N}}^{-D'} N_1^{-\nu-\sigma'}\\
		\lesssim \parentbig{\un_{M\lesssim N}\parentbig{\frac{N}{M}}^{-\alpha}+\un_{M\gg N}\parentbig{\frac{M}{N}}^{-D'}}M^{-\sigma'}\,.
\end{multline*}
Estimate~\eqref{eq:wg-hh} comes from multyplying by $\parent{C_{\leq N, D}(M)+C_{N,\alpha}(M)}$ and summing over $M$.
~\\
$\bullet$\emph{Proof of~\eqref{eq:wv-hh}.}
The proof is identical as the proof of~\eqref{eq:wg-hh}, indeed we only need to replace $G$ by $v$, $D'$ by $D$ dans $\sigma'$ by $\sigma$ in the argument.
~\\
$\bullet$\emph{Proof of~\eqref{eq:fg-hh}.}
We adopt the same strategy but we put the first term in the adapted space $X_{N,D'}$ after taking $s$ derivatives:
\begin{multline*}
\sum_{N_1\sim N_2\gtrsim M}\sum_{N_3\lesssim N_1}\norm{P_{N_1}F P_{N_2}G P_{N_3}\phi^{(3)}}_{L_t^\frac{4}{3}L_x^1L_y^2}\\
	\lesssim T^{\frac 12}\sum_{N_1\sim N_2\gtrsim M}\max\parentbig{\frac{N}{N_1},\frac{N_1}{N}}^{-D'} 
	\max\parentbig{1,\frac{N_2}{N}}^{-D'} N_1^{-s} N_2^{-\sigma'}\\
	\norm{\japbrak{D_y}^{s}F}_{X_{N,D'}}\norm{\japbrak{D_y}^{\sigma}\phi^{(3)}}_{S_{\leq N,D}}\norm{\japbrak{D_y}^{\sigma'}G}_{S_{\leq N,D'}} \,.
\end{multline*}
Then we have
\[
\un_{M\lesssim N}\sum_{M\lesssim N_1\sim N_2\lesssim N}\parentbig{\frac{N}{N_1}}^{-D'} N_1^{-s-\sigma'}
	\lesssim N^{-s-\sigma'}\,,
\]
\[
\un_{M\lesssim N}\sum_{N\lesssim N_1\sim N_2}\parentbig{\frac{N_1}{N}}^{-2D'} N_1^{-s-\sigma'}
	\lesssim N^{-s-\sigma'}\,,
\]
\[
\un_{M\gg N}\sum_{M\lesssim N_1\sim N_2}\parentbig{\frac{N_1}{N}}^{-2D'} N_1^{-s-\sigma'}
	\lesssim \parentbig{\frac{M}{N}}^{-2D'}M^{-s-\sigma'}\,,
\]
so that
\begin{multline*}
 M^{\nu}\sum_{N_1\sim N_2\gtrsim M}\max\parentbig{\frac{N}{N_1},\frac{N_1}{N}}^{-D'}
	\max\parentbig{1,\frac{N_2}{N}}^{-D'} N_1^{-s} N_2^{-\sigma'}\\
	\lesssim \un_{M\lesssim N}\parentbig{\frac{N}{M}}^{-\alpha}M^{\nu-\alpha}N^{\alpha-s-\sigma'}
	+\un_{M\gg N}\parentbig{\frac{M}{N}}^{-2D'+\nu-s-\sigma'} N^{\nu-s-\sigma'}\,.
\end{multline*}
After multiplying by $\parent{C_{\leq N, D}(M)+C_{N,\alpha}(M)}$ the sum over $M$ is bounded by $N^{\nu-s-\sigma'}$ since $2D'>D+\nu-s-\sigma'$ by assumption.

~\\
$\bullet$\emph{Proof of~\eqref{eq:fv-hh}.}
Finally, we put the term $F$ in $S_{N,D'}$:
\begin{multline*}
\sum_{N_1\sim N_2\gtrsim M}\sum_{N_3\lesssim N_1}\norm{P_{N_1}F P_{N_2}v P_{N_3}\phi^{(3)}}_{L_t^\frac{4}{3}L_x^1L_y^2}\\
	\lesssim T^{\frac 12}\sum_{N_1\sim N_2\gtrsim M}\max\parentbig{\frac{N}{N_1},\frac{N_1}{N}}^{-D'} \max\parentbig{1,\frac{N_2}{N}}^{-D} N_1^{-\sigma'} N_2^{-\nu}\\
	\norm{\japbrak{D_y}^{\sigma'}F}_{S_{N,D'}}\norm{\japbrak{D_y}^{\sigma}\phi^{(3)}}_{S_{\leq N,D}}\norm{\japbrak{D_y}^{\nu}v}_{X_{\leq N,D}} \,.
\end{multline*}
Then we have
\[
\un_{M\lesssim N}\sum_{M\lesssim N_1\sim N_2\lesssim N}\parentbig{\frac{N}{N_1}}^{-D'} N_1^{-\sigma'-\nu}
	\lesssim N^{-\sigma'-\nu}\,,
\]
\[
\un_{M\lesssim N}\sum_{N\lesssim N_1\sim N_2}\parentbig{\frac{N_1}{N}}^{-D'-D} N_1^{-\sigma'-\nu}
	\lesssim N^{-\sigma'-\nu}\,,
\]
\[
\un_{M\gg N}\sum_{M\lesssim N_1\sim N_2}\parentbig{\frac{N_1}{N}}^{-D'-D} N_1^{-\sigma'-\nu}
	\lesssim \parentbig{\frac{M}{N}}^{-D'-D}M^{-\sigma'-\nu}\,,
\]
so that
\begin{multline*}
 M^{\nu}\sum_{N_1\sim N_2\gtrsim M}\max\parentbig{\frac{N}{N_1},\frac{N_1}{N}}^{-D'}
	\max\parentbig{1,\frac{N_2}{N}}^{-D} N_1^{-\sigma'} N_2^{-\nu}\\
	\lesssim \un_{M\lesssim N}\parentbig{\frac{N}{M}}^{-\alpha}M^{\nu-\alpha}N^{\alpha-\nu-\sigma'}
	+\un_{M\gg N}\parentbig{\frac{M}{N}}^{-D'-D-\sigma'} N^{\sigma'}\,.
\end{multline*}
After multiplying by $\parent{C_{\leq N, D}(M)+C_{N,\alpha}(M)}$ the sum over $M$ is bounded by  $N^{-\sigma'}$.

\end{proof}




\appendix
\section{Remarks on ill-posedness}
\label{sec:ill-posedness}

\subsection{Semilinear ill-posedness}
From the traveling wave profiles for the Szeg\H{o} equation, we cook up a one-parameter family of profiles from which we deduce that the bilinear estimate only holds when $s\geq\frac{1}{2}$. Then, we proceed as in Remark 2.12 from~\cite{bgt-05} to deduce that the flow map cannot be of class $\mathcal{C}^3$ at the origin when $\frac{1}{4}<s<\frac{1}{2}$. We recall that $\mathcal{H}^s=H^{2s}_xL^2_y\cap L^2_xH^{s}_y$.


\begin{theorem}[Semilinear ill-posedness]
If there exists a local in time flow map on $\mathcal{H}^s$ with regularity $\mathcal{C}^3$ at the origin, then $s\geq\frac 12$.
\end{theorem}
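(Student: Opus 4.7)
The plan follows the classical argument of~\cite{bgt-05}. If $\Phi_t:\mathcal{H}^s\to\mathcal{H}^s$ is $\mathcal{C}^3$ near the origin, then its third Fréchet derivative at $0$ coincides, up to the constant $-i\mu$, with the third Picard iterate
\[
T_3(f)(t) := \int_0^t e^{i(t-\tau)\mathcal{A}}\bigl(|e^{i\tau\mathcal{A}}f|^2\, e^{i\tau\mathcal{A}}f\bigr)\dd\tau\,,
\]
so $\mathcal{C}^3$ regularity at $0$ forces the trilinear bound $\|T_3(f)(t)\|_{\mathcal{H}^s}\lesssim\|f\|_{\mathcal{H}^s}^3$ for $t\in[0,T]$ and some fixed $T>0$. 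The goal is to invalidate this bound below $s=1/2$ using a one-parameter family of Szeg\H{o} profiles.

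I restrict to tensorized initial data $f(x,y)=g(x)\phi(y)$ with $g\in\mathcal{S}(\R)$ a fixed Schwartz bump normalized so that $\|g\|_{L^2\cap H^{2s}}\sim 1$, and with $\phi$ positively Fourier-supported in $\eta$. The key mechanism is that on the positive $\eta$-sector the half-wave phase $|\eta|-|\eta_1|+|\eta_2|-|\eta_3|$ cancels identically under the convolution constraint $\eta=\eta_1-\eta_2+\eta_3$, since $|\eta_i|=\eta_i$ there. A direct Fourier computation then yields
\[
P_+T_3(f)(t,x,y)=G_3(t,x)\cdot\bigl(P_+(|\phi|^2\phi)\bigr)(y-t)\,,
\]
where $P_+$ is the Fourier projector onto $\{\eta>0\}$ and $G_3(t,x)$ is (up to $-i\mu$) the third Picard iterate of the $1$D cubic NLS in $x$ with data $g$, satisfying $\|G_3(t)\|_{L^2_x}\sim t$ for small $t$ by a convergent expansion. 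The negative-$\eta$ contribution to $T_3(f)$ carries a bounded oscillatory multiplier of the form $(e^{2it|\eta|}-1)/(2i|\eta|)$ in $\eta$ and is harmless. The required trilinear bound therefore reduces to the $1$D inequality
\[
\|P_+(|\phi|^2\phi)\|_{H^s(\R)}\lesssim\|\phi\|_{H^s(\R)}^3\,,
\]
for positively Fourier-supported $\phi$, which is the bilinear-type estimate alluded to in the theorem statement.

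To invalidate this inequality, I take the Szeg\H{o} profiles $\phi_a(y):=(y+ia)^{-1}$ for $a\in(0,1]$, with Fourier transform $\widehat{\phi_a}(\eta)\propto e^{-a\eta}\mathbf{1}_{\eta>0}$ and $\|\phi_a\|_{H^s(\R)}\sim a^{-s-1/2}$. A partial-fraction decomposition of $|\phi_a|^2\phi_a$ in $y$ produces
\[
P_+(|\phi_a|^2\phi_a)=\tfrac{1}{4a^2}\phi_a-\tfrac{i}{2a}\phi_a'\,,
\]
whose $H^s$-norm is $\sim a^{-s-5/2}$, dominated by the derivative term in the regime $\eta\gg 1/a$. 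The required bound becomes $a^{-s-5/2}\lesssim a^{-3s-3/2}$, i.e.\ $a^{2s-1}\lesssim 1$ uniformly in $a\in(0,1]$, which holds if and only if $s\geq 1/2$. Rescaling $\phi_a\mapsto\lambda\phi_a$ with $\lambda\to 0$ places the counterexample arbitrarily close to the origin in $\mathcal{H}^s$, yielding the contradiction. The main technical step in filling out this plan is to justify that the $x$-Schrödinger propagation and the negative-$\eta$ contribution to $T_3$ do not mask the linear-in-$t$ growth of $P_+T_3$; both are soft consequences of the explicit formulas above.
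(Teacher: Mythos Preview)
Your argument is correct and reaches the same threshold $s\geq 1/2$ through the same Szeg\H{o} profile $\phi_a(y)=(y+ia)^{-1}$, but the route differs from the paper's. The paper invokes~\cite{bgt-05} to reduce $\mathcal{C}^3$ regularity of the flow to the linear Strichartz estimate $\|e^{it\mathcal{A}}f\|_{L^4([0,1]\times\R^2)}\lesssim\|f\|_{\dot{\mathcal{H}}^{s/2}}$ and then tests this inequality with $f=G(x)K_\rho(y)$, using only that $K_\rho$ is a traveling wave so that $\|e^{-it|D_y|}K_\rho\|_{L^4_y}$ is constant in $t$. You instead work directly with the third Picard iterate $T_3$ and exploit the exact cancellation of the half-wave phase on the positive sector to obtain the explicit factorization $P_+T_3(f)=G_3(t,x)\,(P_+|\phi|^2\phi)(y-t)$; this reduces the question to the one-dimensional algebra estimate $\|P_+(|\phi|^2\phi)\|_{H^s}\lesssim\|\phi\|_{H^s}^3$, which the partial-fraction identity $P_+(|\phi_a|^2\phi_a)=\tfrac{1}{4a^2}\phi_a-\tfrac{i}{2a}\phi_a'$ kills exactly when $s<1/2$.

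Your approach is somewhat more self-contained in that it bypasses the Strichartz reduction and the $L^4$ computation entirely, at the cost of the algebraic observation that $T_3$ factorizes on the positive $\eta$-sector. Two minor remarks: first, you do not actually need to control the negative-$\eta$ contribution at all, since $P_+$ is bounded on $\mathcal{H}^s$ and you are after a \emph{lower} bound on $\|T_3(f)\|_{\mathcal{H}^s}$; second, the two terms $\tfrac{1}{4a^2}\phi_a$ and $-\tfrac{i}{2a}\phi_a'$ have the same order $a^{-s-5/2}$ in $H^s$, so one should note (as is immediate on the Fourier side, where the combined symbol is $\tfrac{1}{4a^2}+\tfrac{\eta}{2a}>0$) that they do not cancel.
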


\begin{proof} 
As a corollary of~\cite{bgt-05}, Remark 2.12, if there exists a $\mathcal{C}^3$ local in time flow map at the vicinity of the origin in the space $\mathcal{H}^s$, then the following bilinear estimate holds:
\begin{equation}\label{eq:Strichartz_disp}
\|e^{it\A}f\|_{L^4([0,1]\times \R^2)}\lesssim \|f\|_{\dot{\mathcal{H}}^{\frac s2}}\,.
\end{equation}
As usual, we use a one-parameter family of stationary solutions to evidence the instabilities of the flow-map. Namely, we consider a Gaussian distribution $G$, a family of traveling waves profiles for the Szeg\H{o} equation $K_{\rho}(y)$, for $\rho\in\intervaloo{0}{+\infty}$, and set
\[
f(x,y)=G(x)K_{\rho}(y)\,,\quad K_\rho(y)=\frac{1}{y+i\rho}\,.
\]
First, let us look at the scaling of $L_{t,x}^4$-norm of $f_\rho$ as $\rho$ goes to 0. Since $K_{\rho}$ is a traveling wave for the Szeg\H{o} equation on the line~\cite{Pocovnicu2011}, and in particular a traveling wave for equation~\eqref{eq:NLSHW}, one can see that for every $t$, there holds
\[
\|e^{it|D_y|}K_{\rho}\|_{L^4(\R_y)}=\|K_{\rho}\|_{L^4(\R_y)}
\]
with
\[
\|K_{\rho}\|_{L^4(\R_y)}^4=\int_{\R}\frac{1}{(y^2+\rho^2)^2}\dd y=\rho^{-3}\int_{\R}\frac{1}{(v^2+1)^2}\dd v\,.
\]
This implies that as $\rho\to 0$,
\[
\|e^{it\A} f\|_{L^4([0,1]\times \R^2_{x,y})}
	=\|e^{it\partial_{xx}}G\|_{L^4([0,1]\times \R_x)}\|K_{\rho}\|_{L^4(\R_y)}
	\sim  C\rho^{-\frac 34}\,.
\]

Next, let us look at the $L_x^2\dot{H}_y^{\frac{s}{2}}$ component of the $\dot{\mathcal{H}}^s$-norm of $f_\rho$. It reads
\[
\|f\|_{L^2_x\dot{H}_y^\frac{s}{2}}=\|G\|_{L^2_x}\|K_{\rho}\|_{H_y^\frac{s}{2}}\,,
\]
and we compute
\[
\|K_{\rho}\|_{\dot{H}_y^\frac{s}{2}}^2
	=\int_{\R}\int_{\R}\frac{|K_{\rho}(y+h)-K_{\rho}(y)|^2}{|h|^{1+s}}\dd h\dd y\,.
\]
Since 
\[K_{\rho}(y+h)-K_{\rho}(y)=\frac{-h}{(y+h+i\rho)(y+i\rho)}\,,\]
we deduce that
\[
\|K_{\rho}\|_{\dot{H}^{\frac{s}{2}}_y}^2
	=\int_{\R}\int_{\R}|h|^{1-\frac s2}\frac{1}{((y+h)^2+\rho^2)(y^2+\rho^2)}\dd h\dd y\,.
\]
We make the change of variable $y=\rho u$ and $h=\rho v$ to get
\[
\|K_{\rho}\|_{\dot{H}^{\frac{s}{2}}_y}^2
	= \rho^{-1-s}\int_{\R}\int_{\R}|v|^{1-s}\frac{1}{((u+v)^2+1)(u^2+1)}\dd u\dd v\sim C'\rho^{-1-s}\,,
\]
Moreover, 
\[
\norm{f}_{\dot{H}_x^sL_y^2} = \norm{\e^{it\partial_{xx}}G}_{\dot{H}_x^s}\norm{K_\rho}_{L_y^2} = \rho^{-\frac{1}{2}}\parent{\int\frac{1}{1+y^2}\dd y}^\frac{1}{2}\sim C''\rho^{-\frac 12}\,,
\]
so that 
\[
\|f\|_{\dot{\mathcal{H}}^s} \underset{\rho\to0}{\sim}C'\rho^{-\frac{1}{2}-\frac{s}{2}}\,.
\]
Since inequality~\eqref{eq:Strichartz_disp} has to hold as $\rho\to 0$, we see that necessarily, $\rho^{-\frac 34}\lesssim \rho^{-\frac 12-s}$, therefore
\[
\frac{3}{4}\leq\frac{1}{2}+\frac{s}{2}\,,
\]
leading to the result $\frac{1}{2}\leq s$.
\end{proof}

\subsection{Pathological set}

An adaptation of the arguments from~\cite{BurqGerardTzvetkov2005multilinear} implemented in~\cite{Kato2021} implies the following ill-posedness result.
\begin{theorem}[Norm inflation~\cite{Kato2021}] Let $s<\frac{1}{4}$.
For every bounded set $B$ of $\mathcal{H}^s$ and for every $T>0$, the flow map cannot be extended as a continuous map from $B$ to $\mathcal{C}([-T,T],\mathcal{H}^s)$. More precisely, there exists a sequence $(t_n)_{n\in\mathbb{N}}$ of positive numbers tending to zero and a sequence $(u_N(t))_{n\gg 1}$ of $\mathcal{C}^{\infty}(\mathbb{R}^2)$ solutions of~\eqref{eq:NLSHW} defined for $t\in[0,t_n]$, and as $n\to\infty$
\[
\|u_n(0)\|_{\mathcal{H}^s}\to 0\,,
\]
\[
\|u_n(t_n)\|_{\mathcal{H}^s}\to\infty\,.
\]
\end{theorem}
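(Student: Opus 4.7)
The plan is to follow the norm-inflation framework of Kato~\cite{Kato2021}, which itself adapts to weakly dispersive equations the Burq-Gérard-Tzvetkov~\cite{BurqGerardTzvetkov2005multilinear} method. The key structural input is the anisotropic scaling~\eqref{eq:scaling}, which leaves $\dot{\mathcal{H}}^{1/4}$ invariant: for $s<1/4$ one can rescale a fixed profile to have arbitrarily small $\mathcal{H}^s$-norm while keeping a large pointwise amplitude, which in turn drives a large cubic phase in short time.

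Concretely, I would fix a non-trivial Schwartz profile $\varphi\in\mathcal{S}(\R^2)$ and consider the family of initial data
\[
u_n^0(x,y) = a_n\,\lambda_n\,\varphi(\lambda_n x,\lambda_n^2 y),
\]
with $\lambda_n\to\infty$ and $a_n>0$ to be tuned. A direct change-of-variables computation gives $\|u_n^0\|_{\mathcal{H}^s}\lesssim a_n\lambda_n^{2s-1/2}\|\varphi\|_{\mathcal{H}^s(\R^2)}$, both components of the anisotropic norm (i.e.\ $L^2_xH^s_y$ and $H^{2s}_xL^2_y$) scaling the same way.

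The associated approximate solution would be the pure-ODE flow
\[
v_n(t,x,y) = u_n^0(x,y)\exp\bigl(-i\mu t\,|u_n^0(x,y)|^2\bigr),
\]
which exactly solves $i\partial_t v=\mu|v|^2 v$ and has the same $L^2$ norm as $u_n^0$. A direct Fourier analysis in $y$ shows that, once the phase-derivative scale $t\,a_n^2\lambda_n^4$ exceeds the intrinsic derivative scale $\lambda_n^2$ of $u_n^0$ -- i.e.\ $t\,a_n^2\lambda_n^2\gg 1$ -- the $\eta$-Fourier support of $v_n(t)$ concentrates near $|\eta|\sim t\,a_n^2\lambda_n^4$, yielding
\[
\|v_n(t_n)\|_{\mathcal{H}^s}\gtrsim t_n^s\,a_n^{1+2s}\,\lambda_n^{4s-1/2}.
\]
Choosing e.g.\ $a_n=\lambda_n^{1/2-2s-\varepsilon}$ and $t_n=\lambda_n^{-2}$ for a small $\varepsilon>0$, one checks that the four conditions $a_n\lambda_n^{2s-1/2}\to 0$, $t_n\to 0$, $t_n a_n^2\lambda_n^2\gg 1$, and $t_n^s a_n^{1+2s}\lambda_n^{4s-1/2}\to\infty$ are simultaneously satisfied; the window of admissible exponents is open exactly when $s<1/4$ and degenerates at the scaling-critical threshold.

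The remaining step is to control the remainder $r_n:=u_n-v_n$, where $u_n$ is the actual smooth solution of~\eqref{eq:NLSHW} on $[0,t_n]$, provided by Proposition~\ref{prop:lwp} applied at high regularity since the data are Schwartz. The remainder satisfies a linear half-wave Schrödinger equation with zero initial datum and source $-\mathcal{A}v_n$ plus lower-order perturbative terms, which by Duhamel gives roughly $\|r_n(t_n)\|_{\mathcal{H}^s}\lesssim t_n\|\mathcal{A}v_n\|_{L^\infty_t\mathcal{H}^s}$. The main obstacle, specific to~\eqref{eq:NLSHW}, is the lack of genuine dispersion in $y$: the half-wave operator $|D_y|$ acts as a pure transport on each half of the $\eta$-axis, so the term $|D_y|v_n$ cannot be made small by Fourier oscillation alone. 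Overcoming this requires selecting $\varphi$ with one-sided Fourier support in $\eta$, so that $|D_y|=\pm D_y$ at the relevant frequencies and can be gauged out by a moving-frame change of variables $y\mapsto y\mp t$, reducing the residual dispersive error to an estimate on $\partial_{xx}v_n$ that remains subleading on the inflation timescale $t_n$. Carrying out this gauge transform rigorously -- and showing that the nonlinear coupling between positive and negative $\eta$-frequencies introduced by $|u|^2u$ stays subleading -- is the technical heart of Kato's argument.
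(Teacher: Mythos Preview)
The paper does not give its own proof of this statement: it is quoted verbatim from Kato~\cite{Kato2021}, preceded only by the sentence ``An adaptation of the arguments from~\cite{BurqGerardTzvetkov2005multilinear} implemented in~\cite{Kato2021} implies the following ill-posedness result.'' There is therefore no in-paper argument to compare against directly.

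That said, the paper \emph{does} carry out a very closely related construction immediately afterwards, for the ``Generic ill-posedness'' theorem, and it is instructive to compare your sketch with that. The framework is the same as yours: anisotropically rescaled bump initial data $v_n(0,x,y)=\lambda_n\varphi(nx,n^2y)$ with $\lambda_n=\kappa_n n^{3/2-2s}$, the ODE profile $v_n^\varepsilon(t)=v_n^\varepsilon(0)\,e^{it|v_n^\varepsilon(0)|^2}$, a growth lemma for $\|v_n^\varepsilon(t_n)\|_{\mathcal{H}^s}$, and a perturbative comparison with the true flow. Your scaling computations and your identification of the phase-derivative mechanism driving the growth match this exactly. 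The main differences are in the details you leave open:
\begin{itemize}
\item The paper's (and presumably Kato's) time scale is $t_n\sim n^{4s-3}$ up to logarithms, not your $t_n=\lambda_n^{-2}$. Your choice happens to satisfy the four algebraic constraints you list, but it is too long for the remainder control: with $t_n=\lambda_n^{-2}$ the contribution of $\partial_{xx}v_n$ to the Duhamel term already picks up phase-derivative factors of order $(t_na_n^2\lambda_n^3)^2$, which for small $s$ is not subleading. The shorter time $t_n\sim\lambda_n^{-2}\cdot(\text{amplitude})^{-2}$ used in the paper keeps all error terms under control.
\item The paper's construction does \emph{not} use one-sided Fourier support or a moving-frame gauge. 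It treats $|D_y|v_n$ on the same footing as $\partial_{xx}v_n$ via direct $H^k_xH^l_y$ bounds on the ODE profile (their Lemma on ``Growth of the ODE profile'') and closes the error in $L^2_xH^1_y$ by an energy estimate. Your gauge idea is attractive in principle, but as you yourself note, the cubic nonlinearity immediately couples positive and negative $\eta$-frequencies through the complex conjugate, and the phase oscillation you rely on for growth is precisely what destroys the one-sided localisation on the inflation time scale. Making that coupling ``subleading'' would require an argument at least as delicate as the direct energy estimate, so the gauge does not obviously buy anything here.
\end{itemize}
In short: your overall strategy is the right one and matches both Kato's and the paper's related construction; the specific time-scale choice and the one-sided-support device are the two places where your sketch diverges from what is actually done, and in both cases the paper's more pedestrian route (shorter $t_n$, direct energy bounds) is what closes the argument.
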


Non-uniform continuity of the flow map for $s=\frac{1}{4}$. has also been investigated in~\cite{Kato2021}. Using the method of Sun and Tzvetkov~\cite{SunTzvetkov2020pathological} adapted to Schrödinger-type equations in~\cite{CampsGassot2022}, we make precise this norm inflation result in the context of probabilistic well-posedness. We fix $\rho\in\mathcal{C}_c^{\infty}(\R^2)$, valued in $[0,1]$, such that $\int_{\R^2}\rho(x)\dd x=1$ and $\rho$ vanishes for $x^2+y^2\geq \frac{1}{10^4}$. Due to the anisotropy, we define an approximate identity $(\rho_{\varepsilon})_{\varepsilon>0}$ of the form
\[
\rho_{\varepsilon}(x,y):=\frac{1}{\varepsilon^3}\rho\left(\frac{x}{\varepsilon},\frac{y}{\varepsilon^2}\right)\,.
\]

\begin{theorem}[Generic ill-posedness for~\eqref{eq:NLSHW}]
Let $s<1/2$. There exists a dense set $S\subset X^s$ such that for every $f\in S$, the family of local solutions $(u^{\varepsilon})_{\varepsilon>0}$ of~\eqref{eq:NLSHW}  with initial data $\rho_{\varepsilon}\ast f$ does not converge. More precisely, there exist $\varepsilon_n\to 0$ and $t_n\to 0$ such that $u^{\varepsilon_n}(t_n)$ is well-defined and
\[
\lim_{n\to\infty}\|u^{\varepsilon_n}(t_n)\|_{\mathcal{H}^s}=\infty\,.
\]
\end{theorem}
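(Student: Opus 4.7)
My plan is to adapt the Sun--Tzvetkov Baire-category scheme~\cite{SunTzvetkov2020pathological} (in the form used in~\cite{CampsGassot2022}) to promote the pointwise norm-inflation provided by the previous theorem into a generic statement adapted to the regularization by $\rho_\varepsilon$. For each integer $k\geq 1$, introduce
\[
A_k := \setbig{f\in X^s : \exists\,\varepsilon\in(0,1/k),\ \exists\,t\in(0,1/k),\ u^\varepsilon(t)\ \text{is defined and}\ \norm{u^\varepsilon(t)}_{\mathcal{H}^s}>k}\,,
\]
and take $S:=\bigcap_{k\geq 1} A_k$. If each $A_k$ is open and dense in $X^s$, then $S$ is dense $G_\delta$ by Baire and any $f\in S$ admits, by a diagonal extraction, sequences $\varepsilon_n\to 0$ and $t_n\to 0$ with $\norm{u^{\varepsilon_n}(t_n)}_{\mathcal{H}^s}\to\infty$.

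\emph{Openness of $A_k$.} For $f\in A_k$ with witness $(\varepsilon,t)$, the regularized data $\rho_\varepsilon \ast f\in\mathcal{H}^\infty$, and Proposition~\ref{prop:lwp} gives continuous dependence of $u^\varepsilon(t)$ in $\mathcal{H}^{\frac{1}{2}+}$, hence in $\mathcal{H}^s$, with respect to small perturbations of $\rho_\varepsilon\ast f$ in $\mathcal{H}^{\frac{1}{2}+}$. For that fixed $\varepsilon$, a small perturbation of $f$ in $X^s$ produces an arbitrarily small perturbation of $\rho_\varepsilon \ast f$ in any $\mathcal{H}^\sigma$, so the strict inequality $\norm{u^\varepsilon(t)}_{\mathcal{H}^s}>k$ is stable.

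\emph{Density of $A_k$.} Fix $f_0\in X^s$ and $\delta>0$. First, by density of $\mathcal{H}^\infty$ in $X^s$ (which is just a statement about the underlying Sobolev spaces), replace $f_0$ by $\tilde f_0\in\mathcal{H}^\infty$ with $\norm{\tilde f_0-f_0}_{X^s}<\delta/2$. Next, invoke the previous norm-inflation theorem to obtain smooth solutions $u_n$ with $\norm{u_n(0)}_{\mathcal{H}^s}\to 0$, $\norm{u_n(t_n)}_{\mathcal{H}^s}\to\infty$, and $t_n\to 0^+$; following Kato's construction, the profile $u_n(0)$ can be arranged to be concentrated on $y$-frequencies $\abs{\eta}\sim N_n\to\infty$. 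Set $f_n := \tilde f_0+u_n(0)$; then $\norm{f_n-f_0}_{X^s}<\delta$ for $n$ large. Pick $\varepsilon_n\to 0$ with $\varepsilon_n<1/k$ and $\varepsilon_n N_n^2\to 0$, so that $\rho_{\varepsilon_n}\ast u_n(0) \to u_n(0)$ in $\mathcal{H}^{\frac{1}{2}+}$ (allowed because $u_n(0)\in\mathcal{H}^\infty$). By continuous dependence in Proposition~\ref{prop:lwp}, the solution $v^{\varepsilon_n}$ with initial data $\rho_{\varepsilon_n}\ast u_n(0)$ exists on $[0,t_n]$ for large $n$ and satisfies $\norm{v^{\varepsilon_n}(t_n)-u_n(t_n)}_{\mathcal{H}^s}\to 0$, hence $\norm{v^{\varepsilon_n}(t_n)}_{\mathcal{H}^s}\to\infty$. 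To conclude that $f_n\in A_k$, compare the solution $u^{\varepsilon_n}$ with initial data $\rho_{\varepsilon_n}\ast f_n = \rho_{\varepsilon_n}\ast \tilde f_0 + \rho_{\varepsilon_n}\ast u_n(0)$ to $v^{\varepsilon_n}$: since $\tilde f_0\in\mathcal{H}^\infty$, the smooth background $\rho_{\varepsilon_n}\ast \tilde f_0$ is uniformly bounded in every $\mathcal{H}^\sigma$ by $\norm{\tilde f_0}_{\mathcal{H}^\sigma}$ independently of $\varepsilon_n$, and the Duhamel difference equation, evaluated in the Strichartz space $L^\infty_t\mathcal{H}^{\frac{1}{2}+}\cap L^4_tL^\infty_{x,y}$ of Proposition~\ref{prop:lwp}, yields the a priori bound $\norm{u^{\varepsilon_n}(t_n)-v^{\varepsilon_n}(t_n)}_{\mathcal{H}^s}\leq C(\tilde f_0)$ uniformly in $n$. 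Hence $\norm{u^{\varepsilon_n}(t_n)}_{\mathcal{H}^s}\to\infty$ and $f_n\in A_k$ for large $n$.

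\emph{Main obstacle.} The delicate step is closing the comparison estimate in the last paragraph uniformly in $n$ despite $\norm{v^{\varepsilon_n}(t_n)}_{\mathcal{H}^s}$ diverging: one must prevent the low-frequency bounded background from interfering with the high-frequency inflating mode. The two facts that save the day are that the inflation occurs on the vanishing timescale $t_n\to 0$ and that $u_n(0)$ is frequency-localized at $N_n\to\infty$ while $\rho_{\varepsilon_n}\ast\tilde f_0$ stays essentially at bounded frequency; a careful bookkeeping of derivatives in the Strichartz-type trilinear estimates of Section~\ref{section:trilinear_proof}, restricted to the \emph{low-high} interactions between $\tilde f_0$ and the inflating profile, yields a remainder uniformly bounded in $n$. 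This is the analog of the argument of~\cite{CampsGassot2022} in our geometry.
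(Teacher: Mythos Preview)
Your Baire-category scheme is a genuinely different route from the paper's, but the density step contains a real gap. The paper does \emph{not} use Baire: it defines $S$ explicitly via the ``tanghuru'' construction
\[
S=\Bigl\{u_0+\sum_{k\geq k_1}v_{0,k}\ :\ u_0\in\mathcal{C}_c^\infty,\ k_1\geq1\Bigr\},
\]
where each $v_{0,k}$ is a rescaled bump with parameters $n_k=e^{a^k}$. For $f\in S$ and $\varepsilon=\varepsilon_{n_k}$, the paper compares the true solution not to another NLS--HW solution but to the \emph{ODE profile} $v_{n_k}^{\varepsilon}(t,x,y)=v_{n_k}^{\varepsilon}(0)\,e^{it|v_{n_k}^{\varepsilon}(0)|^2}$, for which one has explicit pointwise and $H^k_xH^l_y$ bounds at \emph{every} regularity (Lemma on growth of the ODE profile). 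The smooth background $u_0$ and the off-scale bubbles are then handled by direct energy estimates against these explicit bounds.

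Your argument instead takes the inflating profile $u_n$ from Kato's theorem as a black box and tries to show
\[
\norm{u^{\varepsilon_n}(t_n)-v^{\varepsilon_n}(t_n)}_{\mathcal{H}^s}\leq C(\tilde f_0)
\]
by a Duhamel estimate in the LWP space $L^\infty_t\mathcal{H}^{\frac12+}\cap L^4_tL^\infty_{x,y}$. But this cannot close: the difference equation contains terms of the form $|v^{\varepsilon_n}|^2\,w$, and controlling them in that space requires a uniform bound on $\norm{v^{\varepsilon_n}}_{L^4_t([0,t_n];L^\infty_{x,y})}$. Such a bound is \emph{incompatible with norm inflation}: if it held, the standard Strichartz iteration would give $\norm{v^{\varepsilon_n}}_{L^\infty_t\mathcal{H}^s}\lesssim\norm{v^{\varepsilon_n}(0)}_{\mathcal{H}^s}\to0$, contradicting $\norm{v^{\varepsilon_n}(t_n)}_{\mathcal{H}^s}\to\infty$. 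So the Strichartz norm of the inflating solution necessarily diverges on $[0,t_n]$, and your comparison estimate has no chance. Invoking the trilinear bounds of Section~\ref{section:trilinear_proof} does not help either: those estimates are tailored to the probabilistic adapted-linear/remainder decomposition (with the $X_{N,D}$, $S_{N,D'}$ weights and frequency localization of $F_N$) and give nothing for two arbitrary deterministic NLS--HW solutions. Finally, note that Kato's theorem is stated for $s<\frac14$, whereas the present statement is for $s<\frac12$; the paper's ODE-profile approach covers the full range (cf.\ the hypothesis $0\leq s<\frac12$ in the growth lemma), so you cannot bootstrap the larger range from Kato's result alone.

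To fix the argument you would need, as the paper does, an \emph{explicit} inflating ansatz with quantitative control in all $H^k_xH^l_y$ and $L^\infty$ norms, so that the perturbative comparison can be run at a regularity where the equation is well-posed (here $L^2_xH^1_y$), rather than at the critical Strichartz level.
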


Since the argument is the same as in~\cite{CampsGassot2022},  we only make a sketch of the proof adapted to the anisotropy. We construct unstable profiles with growing $\mathcal{H}^s$ norm based on the bounded time-periodic solution $V(t)=e^{it}$ to the ODE
\[
\begin{cases}
iV'+|V|^2V=0\,,
\\
V(0)=1\,.
\end{cases}
\]
Let $\varphi\in\mathcal{C}_{c}^{\infty}(\{x^2+y^2\leq 1\})$, radial such that $0\leq \varphi\leq 1$. Let $\gamma$ to be chosen later, and
\[
\kappa_n:=(\log(n))^{-\gamma}
\,,\quad \lambda_n:=\kappa_n n^{\frac{3}{2}-2s}\,.
\]
We define
\begin{equation*}
v_n(0,x,y):=\lambda_n\varphi(nx,n^2y)\,.
\end{equation*}
We recall that
\[
\rho_{\varepsilon}(x,y):=\frac{1}{\varepsilon^3}\rho\left(\frac{x}{\varepsilon},\frac{y}{\varepsilon^2}\right)\,,
\]
and define
\[
v_n^{\varepsilon}(0):=\rho_{\varepsilon}\ast v_n(0)\,.
\]
The profile
\begin{equation*}
v_n^{\varepsilon}(t,x,y):=v_n^{\varepsilon}(0,x,y) V(t|v_n^{\varepsilon}(0,x,y)|^2)
\end{equation*}
is solution with initial data $v_n^{\varepsilon}(0)$ to
\[
i\partial_t v_n^{\varepsilon}+|v_n^{\varepsilon}|^2v_n^{\varepsilon}=0\,.
\]

We now fix the parameters
\[
\varepsilon_n:=\frac{1}{100n}\,,\quad
t_n:=\log(n)^{2\beta}n^{2(2s-3/2)}=\lambda_n^{-2} \log(n)^{2(\beta-\gamma)}\,, \quad 0<\gamma<\beta<1\,.\]

Lemma~2.1 in~\cite{CampsGassot2022} becomes as follows.
\begin{lem}[Growth of the ODE profile]
Let $0\leq s<\frac 12$.
\begin{enumerate}
\item For every $\varepsilon\leq\varepsilon_n$, there holds 
\[\|v_n^{\varepsilon}(t_n)\|_{H^{2s}_xL^2_y}\gtrsim \kappa_n(\lambda_n^2t_n)^{2s}\,,\quad \|v_n^{\varepsilon}(t_n)\|_{L^2_xH^{s}_y}\gtrsim \kappa_n(\lambda_n^2t_n)^{s}\,.
\]
\item For every $k,l\in\N$, there exists $C>0$ such that for every $n\in\N$, $t\in\R$ and $\varepsilon>0$, 
\[\|v_n^{\varepsilon}(t)\|_{H^k_xH^l_y}\leq C \kappa_n n^{k+2l-2s}(1+(t\lambda_n^2)^{k+l})\min\left(1,\frac{1}{\varepsilon n}\right)^{k+2l+3}\,.\]
\item Moreover, with the same notation, we have 
\[\|\partial_x^k\partial_y^l v_n^{\varepsilon}(t)\|_{L^{\infty}}\leq C \lambda_n n^{k+2l}(1+(t\lambda_n^2)^{k+l})\min\left(1,\frac{1}{\varepsilon n}\right)^{k+2l+3}\,.
\]
\end{enumerate}
\end{lem}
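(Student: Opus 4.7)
All three estimates exploit the explicit formula $v_n^{\varepsilon}(t,x,y) = v_n^{\varepsilon}(0,x,y)\, e^{it|v_n^{\varepsilon}(0,x,y)|^2}$, reducing the analysis to pointwise study of the mollified profile $v_n^\varepsilon(0) = \rho_\varepsilon * v_n(0)$ and its phase $\phi := |v_n^{\varepsilon}(0)|^2$. I would first establish the preliminary bound
\[
\|\partial_x^k\partial_y^l v_n^\varepsilon(0)\|_{L^\infty} \lesssim \lambda_n n^{k+2l} \min\!\left(1, \tfrac{1}{n\varepsilon}\right)^{k+2l+3},
\]
by balancing the four Young-type estimates corresponding to (derivatives on $\rho_\varepsilon$ vs.\ on $v_n(0)$) $\times$ ($L^1*L^\infty$ vs.\ $L^\infty*L^1$), noting $\|\rho_\varepsilon\|_{L^1}=1$, $\|\rho_\varepsilon\|_{L^\infty}\sim \varepsilon^{-3}$, $\|v_n(0)\|_{L^1}\sim\lambda_n n^{-3}$, $\|v_n(0)\|_{L^\infty}=\lambda_n$. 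An analogous split yields the matching $L^2$ bound. For $\varepsilon\leq\varepsilon_n=1/(100n)$ the minimum is $1$ and $v_n^\varepsilon(0)\sim v_n(0)$ on a bulk set $B_n=\{|nx|,|n^2y|\leq r_0\}$ of anisotropic volume $\sim n^{-3}$ with $|v_n^\varepsilon(0)|\gtrsim \lambda_n$ and $|\partial_y^a \partial_x^b \phi|\gtrsim \lambda_n^2 n^{b+2a}$ on a nondegenerate subset.

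The upper bounds (2) and (3) follow by applying Leibniz and Faà di Bruno to $\partial_x^k\partial_y^l(v_n^\varepsilon(0) e^{it\phi})$: each derivative that lands on the exponential contributes a factor $t\partial\phi$, pointwise bounded by $t\lambda_n^2 n$ in $x$ and by $t\lambda_n^2 n^2$ in $y$. Distributing $k+l$ such derivatives among amplitude and phase produces the factor $1+(t\lambda_n^2)^{k+l}$, after which one closes the estimate by invoking the preliminary step for the $L^\infty$ or $L^2$ norm of the remaining amplitude factors, which supplies the $\min(1,1/(n\varepsilon))^{k+2l+3}$ decay and the correct power of $n$.

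The lower bound (1) is the heart of the proof. For the $y$-norm I would use the Sobolev--Slobodeckij characterization, valid since $s<\tfrac12$,
\[
\|f\|_{L^2_x\dot H^s_y}^2 \asymp \int_{\R}\frac{\dd h}{|h|^{1+2s}}\iint_{\R^2}|f(x,y+h)-f(x,y)|^2\dd x\,\dd y,
\]
at the critical scale $h_*^y=(t_n\lambda_n^2 n^2)^{-1}$, which is much smaller than $n^{-2}$ because $t_n\lambda_n^2=\log(n)^{2(\beta-\gamma)}\to\infty$. On $B_n$ one Taylor expands
\[
t_n\big(\phi(x,y+h)-\phi(x,y)\big) = t_n h\,\partial_y\phi(x,y) + O\big(t_n h^2 \lambda_n^2 n^4\big),
\]
with $|t_n h\,\partial_y\phi|\sim 1$ and an error of order $|h|/h_*^y$, so $|e^{it_n\phi(x,y+h)}-e^{it_n\phi(x,y)}|\sim 1$ on a positive-measure subset. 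The amplitude difference $|v_n^\varepsilon(0,x,y+h)-v_n^\varepsilon(0,x,y)| \lesssim |h|\lambda_n n^2 \sim \lambda_n/(t_n\lambda_n^2) = o(\lambda_n)$ is subdominant, and therefore $|v_n^\varepsilon(t_n,x,y+h)-v_n^\varepsilon(t_n,x,y)|\gtrsim \lambda_n$ on that subset. Restricting the integral to $h\in[h_*^y,2h_*^y]$ and $(x,y)\in B_n$ yields
\[
\|v_n^\varepsilon(t_n)\|_{L^2_x\dot H^s_y}^2 \gtrsim \lambda_n^2\cdot n^{-3}\cdot (h_*^y)^{-2s} = \kappa_n^2(\lambda_n^2 t_n)^{2s}.
\]
The $H^{2s}_x L^2_y$ estimate follows the same pattern with the scale $h_*^x=(t_n\lambda_n^2 n)^{-1}$ and the exponent $1+4s$.

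The Leibniz bookkeeping for (2)--(3) is routine; the delicate part is Step~3, where the scales $h_*^x,h_*^y$ must be chosen so that the phase oscillates by an order-one amount on a nondegenerate piece of $B_n$ while the amplitude difference remains subdominant. Both requirements hold precisely because $t_n\lambda_n^2\to\infty$, and the hypothesis $\varepsilon\leq\varepsilon_n$ enters exactly to guarantee that the mollification does not smear out $B_n$ or corrupt the pointwise bound on $\partial_y\phi$.
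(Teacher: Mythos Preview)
The paper does not prove this lemma: it simply states it as the anisotropic analogue of Lemma~2.1 in~\cite{CampsGassot2022} and refers to that paper for the argument. Your proposal reconstructs precisely the standard proof used there (and, earlier, in the Burq--G\'erard--Tzvetkov norm-inflation literature), correctly adapted to the anisotropic scaling $(x,y)\mapsto(nx,n^2y)$: Young-type bounds on the mollified amplitude, Leibniz/Fa\`a di Bruno for the upper bounds, and the Sobolev--Slobodeckij integral at the critical increment scale $h_*$ for the lower bound. The computations you sketch are accurate; in particular the choice $h_*^y=(t_n\lambda_n^2 n^2)^{-1}$, $h_*^x=(t_n\lambda_n^2 n)^{-1}$ gives exactly the claimed powers, and the constraint $s<\tfrac12$ is what makes the Gagliardo seminorm characterization of $\dot H^{2s}_x$ available. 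So your approach is correct and coincides with the one the paper defers to.
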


Let us denote by $t\mapsto \Phi(t)(f)$ the local maximal solution to~\eqref{eq:NLSHW} with initial data $f$ in $\mathcal{H}^m$ when $m>\frac 12$.
In order to define a dense subset $S$ of the pathological set
\[
\mathcal{P}=\left\{f\in\mathcal{H}^s(\R^2)\mid \limsup_{\varepsilon\to0,t\to 0}\|\Phi(t)(\rho_{\varepsilon}\ast f)\|_{\mathcal{H}^s}\to\infty\right\}\,,
\] we apply the {\it ``tanghuru''} construction. The $k$-th bubble $v_{0,k}$ is centered at any point $y_k\in\R^2$, with scaling parameter $n_k=e^{a^k}$ for some $1\ll a$:
\[
v_{0,k}(x,y):=v_{n_k}(x,y-y_k)
	=\log(n_k)^{-\gamma}n_k^{3/2-2s}\varphi(n_kx,n_k^2(y-y_k))\,.
\]
Then for $k,l\geq k_0$, we have that $v_{0,k}$ and $v_{0,l}$ have disjoint supports with radius of respective orders $n_k^{-1}$ and $n_l^{-1}$, moreover,
\[
\dd(\operatorname{supp}(v_{0,k}),\operatorname{supp}(v_{0,l}))\sim \frac{1}{k}-\frac{1}{l}\,.
\]
Applying the convolution by the function $\rho_{\varepsilon_{n_k}}$ which has a support of size $\varepsilon_{n_k}^{-1}=100/n_k$, we know that
\[
\operatorname{supp}(\rho_{\varepsilon_{n_k}}\ast v_{0,k})\subset \left\{(x,y)\in\R^2\mid |y-y_k|\leq C e^{-a^k}\right\}\,.
\]

\begin{definition}[Dense subset of the pathological set]
We denote by $S$ the set of initial data $f$ which can be decomposed under the form
\[
f=u_0+\sum_{k=k_1}^{\infty}v_{0,k}\,,\quad \text{for}\ k_1\geq 1\ \text{and}\ u_0\in \mathcal{C}_c^{\infty}(\R^2)\,.
\]
\end{definition}
One can see that $S$ defines a dense subset of $\mathcal{H}^s$. We make use of the following precise upper bounds in $H^m$, $m\geq 0$, of the initial data regularized by convolution, see Lemma~2.4 in~\cite{CampsGassot2022}.
\begin{lem}[Norm inflation of the lollipop by convolution]
Let $m_1,m_2\in\N$, $k\geq k_0\geq 1$. If $m_1=m_2=0$, there holds
\[
\sum_{l=k_0}^{k-1}\|\rho_{\varepsilon_{n_k}}\ast v_{0,l}\|_{H^{m_1}_xH^{m_2}_y}
	\lesssim 1\,,\quad 
\sum_{l=k}^{\infty}\|\rho_{\varepsilon_{n_k}}\ast v_{0,l}\|_{H^{m_1}_xH^{m_2}_y}
	\lesssim n_{k+1}^{m_1+2m_2-s}\left(\frac{n_k}{n_{k+1}}\right)^{\frac 32}\,.
\]
If $m_1+2m_2\geq 1$, we have 
\[
\sum_{l=k_0}^{k-1}\|\rho_{\varepsilon_{n_k}}\ast v_{0,l}\|_{H^{m_1}_xH^{m_2}_y}
	\lesssim n_{k-1}^{m_1+2m_2-s}\,,\quad 
\sum_{l=k}^{\infty}\|\rho_{\varepsilon_{n_k}}\ast v_{0,l}\|_{H^{m_1}_xH^{m_2}_y}
	\lesssim n_{k}^{m_1+2m_2}n_{k+1}^{-s}\left(\frac{n_k}{n_{k+1}}\right)^{\frac 32}\,.
\]
\end{lem}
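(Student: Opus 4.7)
The plan is to estimate each convolution $\rho_{\varepsilon_{n_k}} \ast v_{0,l}$ separately by applying Young's inequality in whichever form is favourable for the relative scales of the kernel and the bubble. The kernel $\rho_{\varepsilon_{n_k}}$ has anisotropic widths $\varepsilon_{n_k} = 1/(100 n_k)$ in $x$ and $\varepsilon_{n_k}^2$ in $y$, whereas the bubble $v_{0,l}$ has widths $1/n_l$ and $1/n_l^2$ respectively. For $l < k$ the bubble is wider than the kernel, for $l \geq k+1$ it is narrower, and the case $l = k$ is borderline. In all regimes the super-exponential growth $n_l = e^{a^l}$ collapses the geometric sums onto a single boundary term.

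For the low-frequency sum $l < k$, the plan is to apply Young's inequality in the form $\|\rho_{\varepsilon_{n_k}} \ast v_{0,l}\|_{H^{m_1}_x H^{m_2}_y} \leq \|\rho_{\varepsilon_{n_k}}\|_{L^1}\|v_{0,l}\|_{H^{m_1}_x H^{m_2}_y}$, using that the approximate identity $\rho_{\varepsilon_{n_k}}$ has unit $L^1$-mass. The anisotropic change of variables $u = n_l x$, $v = n_l^2(y - y_l)$ in $v_{0,l}$ produces a factor $n_l^{m_1}$ from each $\partial_x$, a factor $n_l^{2m_2}$ from each $\partial_y$, and a factor $n_l^{-3/2}$ from the Jacobian, giving $\|v_{0,l}\|_{H^{m_1}_x H^{m_2}_y} \lesssim \log(n_l)^{-\gamma} n_l^{m_1+2m_2-2s}$. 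For $m_1 = m_2 = 0$ the tail of $\sum_l n_l^{-2s}$ converges at arbitrary rate since $n_l = e^{a^l}$, producing the bound $\lesssim 1$. For $m_1+2m_2 \geq 1$ the sum is geometric and dominated by $l = k-1$, giving $n_{k-1}^{m_1+2m_2-2s} \leq n_{k-1}^{m_1+2m_2-s}$ since $s > 0$.

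For the high-frequency sum $l \geq k$ one dualises: $\|\rho_{\varepsilon_{n_k}} \ast v_{0,l}\|_{H^{m_1}_x H^{m_2}_y} \lesssim \|\rho_{\varepsilon_{n_k}}\|_{H^{m_1}_x H^{m_2}_y} \|v_{0,l}\|_{L^1}$. Rescaling the kernel yields $\|\rho_{\varepsilon_{n_k}}\|_{H^{m_1}_x H^{m_2}_y} \lesssim \varepsilon_{n_k}^{-3/2-m_1-2m_2} \sim n_k^{3/2+m_1+2m_2}$, while direct integration of $v_{0,l}$ gives $\|v_{0,l}\|_{L^1} \lesssim n_l^{-3/2-2s}$. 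The combined bound $n_k^{3/2+m_1+2m_2} n_l^{-3/2-2s}$ is a geometric series in $l$ dominated by its first term $l = k+1$, which after factoring becomes $n_k^{m_1+2m_2}(n_k/n_{k+1})^{3/2} n_{k+1}^{-2s}$, bounded by the claimed $n_k^{m_1+2m_2} n_{k+1}^{-s}(n_k/n_{k+1})^{3/2}$ since $n_{k+1}^{-2s} \leq n_{k+1}^{-s}$. The remaining $l = k$ contribution is handled using the direct bound $\|\rho_{\varepsilon_{n_k}} \ast v_{0,k}\|_{H^{m_1}_x H^{m_2}_y} \lesssim \|v_{0,k}\|_{H^{m_1}_x H^{m_2}_y} \lesssim n_k^{m_1+2m_2-2s}$.

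The only genuine bookkeeping difficulty lies at the borderline index $l = k$, where neither form of Young's inequality is sharper than the other: absorbing the naive bound $n_k^{m_1+2m_2-2s}$ into the target $n_k^{m_1+2m_2} n_{k+1}^{-s}(n_k/n_{k+1})^{3/2}$ requires calibrating the parameter $a$ so that the geometric factor $(n_k/n_{k+1})^{3/2}$ compensates the discrepancy between the exponents $-2s$ and $-s$. This is the only quantitative constraint on $a$ beyond the earlier qualitative requirement $1 \ll a$ used to separate the supports of the bubbles.
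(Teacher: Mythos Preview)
The paper does not give its own proof of this lemma; it cites Lemma~2.4 of \cite{CampsGassot2022}. Your approach---Young's inequality, placing derivatives on whichever factor is wider, and using the super-exponential growth $n_{l+1}=n_l^{\,a}$ to collapse the sums onto a single boundary term---is the natural one, and it handles all indices $l\neq k$ correctly.

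The genuine gap is at the borderline term $l=k$. You write that absorbing $n_k^{m_1+2m_2-2s}$ into the target $n_k^{m_1+2m_2}n_{k+1}^{-s}(n_k/n_{k+1})^{3/2}$ ``requires calibrating the parameter $a$'', and you present this as compatible with the standing hypothesis $1\ll a$. It is not. Writing $n_{k+1}=n_k^{\,a}$, the inequality
\[
n_k^{-2s}\ \lesssim\ n_{k+1}^{-s}\Bigl(\frac{n_k}{n_{k+1}}\Bigr)^{3/2}
= n_k^{\,3/2 - a(s+3/2)}
\]
forces $-2s\le \tfrac32 - a\bigl(s+\tfrac32\bigr)$, i.e.\ $a\le \dfrac{4s+3}{2s+3}$. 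For $0<s<\tfrac12$ this gives $a<\tfrac54$, in direct conflict with $1\ll a$. The same computation applies in the case $m_1=m_2=0$. In fact $\|\rho_{\varepsilon_{n_k}}\ast v_{0,k}\|_{L^2}\sim n_k^{-2s}$ is essentially sharp (the mollifier acts at the scale of the bubble), so no alternative form of Young's inequality rescues this term.

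This strongly suggests that the second sum in the lemma, as transcribed here, is meant to start at $l=k+1$ rather than $l=k$; the $k$-th bubble is the dominant profile kept separate in the subsequent perturbative analysis (cf.\ the role of $v_{n_k}^{\varepsilon_{n_k}}$ in the proposition that follows). With that reading your argument is complete. But as written, your final paragraph asserts a compatibility that fails, and you should either flag the apparent misprint in the range of summation or bound the $l=k$ contribution separately by $n_k^{m_1+2m_2-2s}$ without trying to absorb it into the stated right-hand side.
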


Note that this estimate is not valid for every $\varepsilon<\varepsilon_{n_k}$, but only when $\varepsilon\geq \varepsilon_{n_k}$. As a consequence, we do not have a uniform control on the time of existence of the smooth solution with initial data $\rho_{\varepsilon}\ast f$ in $L^2_xH^{m}_y$ for $m>1/2$.

It remains to study the time evolution of $\rho_{\varepsilon_{n_k}}\ast f$, by comparing it to the ODE profile $\rho_{\varepsilon_{n_k}}\ast v_{0,k}$ only. An adaptation of the proof of Proposition~2.6 in~\cite{CampsGassot2022} leads to the following result. Let $u^{\varepsilon_{n_k}}$ be the maximal solution of~\eqref{eq:NLSHW} with initial data $\rho_{\varepsilon_{n_k}}\ast f$ in $L^2_xH^1_y$, and $T_*(\varepsilon_{n_k})$ the maximal time of existence. Note that according to~\cite{BahriIbrahimKikuchi2020remarks}, Theorem 1.6, the Cauchy problem for~\eqref{eq:NLSHW} is locally well-posed in this space. Moreover, $T_*(\varepsilon_{n_k})$ only depends on the norm of the initial data in $L^2_xH^1_y$. Therefore, the norm of the solution must blow up when $t\to T_*(\varepsilon_{n_k})$. We establish that this does not happen before the norm inflation time $t_{n_k}$ by running a perturbative analysis.

\begin{prop}[Growth of perturbed initial data]
Fix $0<\gamma<\beta<1$. Let $0\leq s<\frac{1}{4}$. Let $n=n_k$. Then the solution $u^{\varepsilon_n}$ is well-defined in $\mathcal{C}([0,t_n],L^2_xH^{1}_y)$. Moreover, there holds
\[
\|u^{\varepsilon_n}(t_n)-\rho_{\varepsilon_n}\ast e^{it\A}u_0-v_n^{\varepsilon_n}(t_n)\|_{L^2_xH^{s}_y}\leq C,
\]
as a consequence,
\[
\|u^{\varepsilon_n}(t_n)\|_{L^2_xH^{s}_y}
	\gtrsim \log(n)^{2s(\beta-\gamma)-\gamma}.
\]
\end{prop}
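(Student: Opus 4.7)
The plan is a perturbative analysis: we decompose
\[
u^{\varepsilon_n}(t) = \rho_{\varepsilon_n} \ast e^{it\mathcal{A}} u_0 + v_n^{\varepsilon_n}(t) + R_n(t), \qquad R_n(0) = 0,
\]
and control the remainder $R_n$ in $L^\infty_t L^2_x H^1_y \cap L^8_t L^4_x L^\infty_y$ on $[0, t_n]$ by a Duhamel-Strichartz bootstrap. Since $v_n^{\varepsilon_n}(t,x,y) = v_n^{\varepsilon_n}(0,x,y) V(t |v_n^{\varepsilon_n}(0,x,y)|^2)$ solves the pointwise ODE (and fails to solve \eqref{eq:NLSHW} only by the dispersive error $\mathcal{A} v_n^{\varepsilon_n}$), the equation for $R_n$ reads
\[
i\partial_t R_n + \mathcal{A} R_n = -\mathcal{A} v_n^{\varepsilon_n} + \mu\bigl(|u^{\varepsilon_n}|^2 u^{\varepsilon_n} - |v_n^{\varepsilon_n}|^2 v_n^{\varepsilon_n}\bigr),
\]
the cubic difference expanding as a polynomial in $\rho_{\varepsilon_n} \ast e^{it\mathcal{A}} u_0 + R_n$ with coefficients that are monomials in $v_n^{\varepsilon_n}, \overline{v_n^{\varepsilon_n}}$ of degree at most $2$.

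The first step is to estimate $v_n^{\varepsilon_n}$ and $\mathcal{A} v_n^{\varepsilon_n}$ on $[0, t_n]$ by the anisotropic bounds of the preceding Lemma. The choice $t_n = \lambda_n^{-2} \log(n)^{2(\beta-\gamma)}$ guarantees that the factor $1 + (t\lambda_n^2)^{k+l}$ in those bounds remains a harmless logarithmic power. Next, applying the Bernstein-Strichartz $TT^*$ estimate of Lemma~\ref{lem:str} with $(p,q,r) = (\infty, 2, 2)$ and $(\widetilde p, \widetilde q) = (4, \infty)$ to the Duhamel formulation, we obtain a closed estimate of the form
\[
\|R_n\|_{L^\infty_t L^2_x H^1_y \cap L^8_t L^4_x L^\infty_y} \lesssim t_n \|\mathcal{A} v_n^{\varepsilon_n}\|_{L^\infty_t L^2_x H^1_y} + \text{(trilinear contributions)},
\]
valid on each subinterval $[0, T] \subset [0, t_n]$ on which $\|R_n\|_{L^\infty_t L^2_x H^1_y} \leq 1$. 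A continuity/bootstrap argument then upgrades this to a uniform bound on $[0, t_n]$, showing in particular that the maximal existence time $T_*(\varepsilon_n)$ exceeds $t_n$, and moreover $\|R_n(t_n)\|_{L^2_x H^s_y} \leq C$.

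The conclusion follows by the triangle inequality in $L^2_x H^s_y$: part $(1)$ of the preceding Lemma gives $\|v_n^{\varepsilon_n}(t_n)\|_{L^2_x H^s_y} \gtrsim \kappa_n (\lambda_n^2 t_n)^{s} = \log(n)^{2s(\beta-\gamma) - \gamma}$, while $\|\rho_{\varepsilon_n} \ast e^{it\mathcal{A}} u_0\|_{L^2_x H^s_y}$ is uniformly bounded since $u_0$ is smooth and compactly supported, and $R_n$ is bounded by the bootstrap. The main obstacle is the careful balance of powers in the trilinear terms: the profile $v_n^{\varepsilon_n}$ has amplitude $\sim \lambda_n = \kappa_n n^{3/2 - 2s}$ on a set of measure $\sim n^{-3}$, so the cubic factor $|v_n^{\varepsilon_n}|^2 R_n$ carries a large $L^\infty$ weight $\sim \lambda_n^2$ on a small support; the time scale $t_n \sim \lambda_n^{-2}$ (up to the logarithmic correction $\log(n)^{2(\beta-\gamma)}$) is chosen precisely so that this term can be absorbed, and the weight $\kappa_n = \log(n)^{-\gamma}$ eats the residual polylogarithmic losses. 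The assumption $s < 1/4$ is used to control the derivative loss incurred when estimating $\mathcal{A} v_n^{\varepsilon_n}$ in Sobolev scales adapted to the scaling of the bubble.
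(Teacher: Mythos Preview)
Your plan is the right one and matches the paper's (which simply defers to Proposition~2.6 of \cite{CampsGassot2022}): compare the PDE solution to the ODE profile, bound the remainder perturbatively, and conclude by the triangle inequality together with the ODE growth lemma. Two points in your sketch need repair.

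First, $R_n(0)\neq 0$. The initial data is $\rho_{\varepsilon_n}\ast f$ with $f=u_0+\sum_{l\geq k_1}v_{0,l}$, so the remainder at time zero carries all the other bubbles $\sum_{l\neq k}\rho_{\varepsilon_n}\ast v_{0,l}$; these are small by the preceding ``lollipop'' Lemma and must be fed into the bootstrap as data --- that Lemma is precisely what handles them. Second, and more substantively, a single application of the Strichartz estimate cannot absorb the term $|v_n^{\varepsilon_n}|^2 R_n$: since $\|v_n^{\varepsilon_n}\|_{L^\infty_{t,x,y}}\sim\lambda_n$, its Duhamel contribution on $[0,t_n]$ is of size $\lambda_n^2 t_n\,\|R_n\|=\log(n)^{2(\beta-\gamma)}\|R_n\|$, which is not a contraction. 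The mechanism in \cite{CampsGassot2022} is an energy/Gronwall argument (equivalently, iteration of the local estimate on $\sim\log(n)^{2(\beta-\gamma)}$ subintervals of length $\lambda_n^{-2}$), which produces a multiplicative loss $\exp\bigl(C\log(n)^{2(\beta-\gamma)}\bigr)$. This loss is harmless because the source terms decay polynomially in $n$: for instance $t_n\|\mathcal{A} v_n^{\varepsilon_n}\|_{L^2_xH^s_y}\lesssim n^{4s-1}\log(n)^{O(1)}$, small exactly when $s<\tfrac14$ (this is where the threshold enters, as you correctly identify). Your sentence ``the time scale $t_n\sim\lambda_n^{-2}$ is chosen precisely so that this term can be absorbed'' gestures at this, but the absorption is by Gronwall against a polynomially small forcing, not by contraction. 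A related issue: the bootstrap in $L^2_xH^1_y$ cannot close with $\|R_n\|\leq 1$ uniformly, since the dispersive error there is $t_n\|\mathcal{A} v_n^{\varepsilon_n}\|_{L^2_xH^1_y}\sim n^{2s+1}$. One must therefore separate the persistence argument in $H^1_y$ (allowing polynomial growth in $n$, which suffices to exclude blow-up before $t_n$) from the uniform difference estimate, which is run in $L^2_xH^s_y$.
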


As a consequence, the set $S$ is a subset of the pathological set $\mathcal{P}$.

\section*{Acknowledgments}
This material is based upon work supported by the National Science Foundation under Grant No. DMS-1929284 while the authors were in residence at the Institute for Computational and Experimental Research in Mathematics in Providence, RI, during the program “Hamiltonian Methods in Dispersive and Wave Evolution Equations”.
The authors would like to thank the organizers of the program, and all the staff at ICERM for their great hospitality. S. Ibrahim is supported by the NSERC grant No. 371637-2019.



\end{document}